\newtheorem{definition}{Definition}[section]
\newtheorem{lemma}[definition]{Lemma}
\newtheorem{theorem}[definition]{Theorem}
\newtheorem{proposition}[definition]{Proposition}
\newtheorem{remark}[definition]{Remark}
\newcommand{\dx}{\,dx}
\newcommand{\ie}{; {\it i.e., }}
\newcommand{\e}{\varepsilon}
\newcommand{\lu}{{L}^1}
\newcommand{\lud}{\lu(D)}
\newcommand{\dist}{{\rm dist}\,}
\newcommand{\Om}{\Omega}
\font\tenmsb=msbm10
\font\sevenmsb=msbm7
\font\fivemsb=msbm5
\def\R{\mathbb{R}}
\newcommand{\rn}{\R^n}
\newcommand{\NN}{{\mathbb N}}
\newcommand{\ZZ}{{\mathbb Z}}
\newcommand{\Zn}{{\mathbb{Z}}^n}
\def\to{\rightarrow}
\def\w{\omega}
\def\Lw{\mathcal{L}(\w)}
\def\NNw{\mathcal{NN}(\w)}
\def\Ard{\mathcal{A}^R(D)}
\def\a{\alpha}
\def\rzn{r^{\prime}\mathbb{Z}^n}
\def\NNS{\mathcal{NN}(\Sigma)}
\begin{document}

\author{Roberto Alicandro}
\address[Roberto Alicandro]{DIEI, Universit\`a  di Cassino e del Lazio meridionale, via Di Biasio 43, 03043 Cassino (FR), Italy}
\email{alicandr@unicas.it}

\author{Marco Cicalese}
\address[Marco Cicalese]{Zentrum Mathematik - M7, Technische Universit\"at M\"unchen, Boltzmannstrasse 3, 85748 Garching, Germany}
\email{cicalese@ma.tum.de}

\author{Matthias Ruf}
\address[Matthias Ruf]{Zentrum Mathematik - M7, Technische Universit\"at M\"unchen, Boltzmannstrasse 3, 85748 Garching, Germany}
\email{mruf@ma.tum.de}

\title{Domain formation in magnetic polymer composites: an approach via stochastic homogenization}

\begin{abstract}
We study the magnetic energy of magnetic polymer composite materials as the average distance between magnetic particles vanishes. 
We model the position of these particles in the polymeric matrix as a stochastic lattice scaled by a small parameter $\e$ and the magnets as classical $\pm 1$ spin variables interacting via an Ising type energy. Under surface scaling of the energy we prove, in terms of $\Gamma$-convergence that, up to subsequences, the (continuum) $\Gamma$-limit of these energies is finite on the set of Caccioppoli partitions representing the magnetic Weiss domains where it has a local integral structure. Assuming stationarity of the stochastic lattice, we can make use of ergodic theory to further show that the $\Gamma$-limit exists and that the integrand is given by an asymptotic homogenization formula which becomes deterministic if the lattice is ergodic.
\end{abstract}

\maketitle

\tableofcontents
\section{Introduction}
Magnetic polymer composite materials have raised the attention of the scientific community in the last decades mainly because of their biomedical applications. 
These materials, synthesized by embedding magnetic particles into a polymer matrix, have light weight and high shape-flexibility and are commonly used for bio-magnetic separations processes.

\medskip

In this paper we start the rigorous mathematical study of the discrete-to-continuum variational description of these materials focusing on their magnetic properties. Our aim is to prove rigorously that, modeling magnetic particles as classical Ising spins (see e.g. \cite{Presutti}) sitting on a disordered lattice, their (surface scaled) microscopic interaction energy leads to the formation of Weiss domains as the average distance between the particles vanishes. As explained below in this introduction, in order to tackle this problem we regard it as a stochastic homogenization problem in the space of functions of bounded variation in $\R^{n}$ where we are able to extend some of the results obtained in the Sobolev setting in the pioneering paper \cite{DMM}. \\

We recall here that the variational analysis of the properties of ground states of Ising-like systems can be traced back to the pioneering paper by Caffarelli and de la Llave \cite{CdlL} and that derivation of continuum theories from atomistic spin-like ones in the framework of $\Gamma$-convergence is not new and has been initiated by Braides and collaborators in \cite{ABC}. Since then it has been developed by many authors in connection with the theory of surfactants, nematic elastomers, dislocations in plasticity, superfluids or frustrated magnetic chains, to cite a few (see for instance \cite{ACXY, ACP, ACS, BCS, BP, BrSo, CS, Ponsiglione}). \\

The modeling of magnetic polymer composite materials at a small (micro or nano) scales requires the modeling of two main objects: a polymer matrix and an interaction energy between the magnetic particles (see \cite{vollath2013} and reference therein for a beginner's guide to this topics).  \\

{\bf The polymer matrix}

The polymer matrix can be modeled as a random network having the cross-linked molecules as nodes. We will suppose the nodes of the network to satisfy some minimal geometric assumption uniformly in the randomness. More precisely we will suppose the set of the nodes of the network to form what we call an {\sl admissible stochastic lattice} according to the definition below. Note that this definition can be considered standard in the framework of statistical mechanics (see for instance \cite{ruelle}) and it was first used in the context of atomic-to-continuum limit in \cite{BLBL} as well as in \cite{ACG2} in the analysis of rubber elasticity models. Given a countable set of points $\Sigma=\{x_i\}_{i\in\NN}$ in $\R^n$, we say that $\Sigma$ is admissible if
\begin{itemize}
\item[(i)] there exists $R>0$ such that $\inf_{z\in\R^n}\#(\Sigma\cap B(z,R))\geq 1$ (i.e., arbitrarily big empty regions are forbidden),
\item[(ii)] there exists $r>0$ such that $\inf\{|x-y|,\ x,y\in\Sigma,\ \ x\not=y\}\geq r$ (i.e., clusters are forbidden).
\end{itemize}
Then, given a probability space $(\Omega,{\mathcal F},\mathbb P)$, a random variable ${\mathcal L}:\Omega\to(\R^n)^{\NN}$ is called an {\sl admissible stochastic lattice} if, uniformly with respect to $\omega\in\Omega$, ${\mathcal L}(\omega)$ is an admissible set of points. Note that our assumptions on the admissibility of a stochastic lattice rule out many point processes well known in probability theory and are instead motivated by the usual structural assumptions on the polymeric matrix. \\

{\bf The atomic energy}

\noindent To every stochastic lattice ${\mathcal L}(\omega)$ we associate a Voronoi tessellation ${\mathcal V}({\mathcal L}(\omega))$ and define the set of nearest neighboring points, namely $\mathcal {NN}(\omega)$, as the set of those pairs of points of the stochastic lattice ${\mathcal L}(\omega)$ which share a $(n-1)$-dimensional edge of the associated Voronoi tessellation.
Let $D\subset \R^n$ be a bounded open set, and $\e>0$ be a small parameter (the limit $\e\to 0$ will be referred to as the continuum limit). We assume that the magnetic state of the particles in $D$ is described by a classical spin variable $u:\e{\mathcal L}(\omega)\cap D\to\{\pm 1\}$ and we model the interactions between the spins via an Ising type energy. The energy model we consider allows all the particles to interact and may distinguish between short-range interactions, which are the interactions between the nearest-neighbors particles, and long-range interactions. The total energy of the system for a given configuration $u$ has the form 
$$F_{\e}(\w)(u):=F_{nn,\e}(\w)(u,D)+F_{lr,\e}(\w)(u,D),$$
where
\begin{align*}
F_{nn,\e}(\w)(u,D)&=\sum_{\substack{(x,y)\in\NNw \\ \e x,\e y\in D}}\e^{n-1} c_{nn}^{\e}(x,y)|u(\e x)-u(\e y)|,\\
F_{lr,\e}(\w)(u,D)&=\sum_{\substack{(x,y)\notin\NNw \\ \e x,\e y\in D}}\e^{n-1} c_{lr}^{\e}(x,y)|u(\e x)-u(\e y)|.
\end{align*}
For $c_{nn}^{\e},c_{lr}^{\e}:\rn\times\rn\rightarrow [0,+\infty)$ we assume that there exist $C>0$ and a decreasing function $J_{lr}:[0,+\infty)\rightarrow[0,+\infty)$ with 
\begin{equation*}
\int_{\rn}J_{lr}(|x|)|x|\,\mathrm{d}x =J<+\infty
\end{equation*}
such that, for all $\varepsilon>0$ and all $x,y\in\rn$,
\begin{align*}
&\frac{1}{C}\leq c_{nn}^{\e}(x,y)\leq C,  \\
&c_{lr}^{\e}(x,y)\leq J_{lr}(|x-y|). 
\end{align*}
As the average distance between the nodes of the network $\e\Lw$ is of order $\e$, the prefactor $\e^{n-1}$ in the energy has the meaning of a surface scaling, so that $F_{\e}(\w)(u)$ is the magnetic energy per unit surface of the network $\e\Lw\cap D$ when the magnetization field is $u$. Taking into account the assumptions above, the atomic system we consider is the surface scaling of a ferromagnetic type system with bounded short-range and summable long-range interactions.\\ 

\smallskip

{\bf The continuum energy}

In the limit as $\e$ tends to $0$ the ferromagnetic behavior of the system will favor the formation of a partition of $D$ into random ($\w$ dependent) Weiss domains described, in the continuum limit, as sets of finite perimeter with fixed magnetization $+1$ or $-1$. The interaction energy between the Weiss domains will depend on the randomness via the stochasticity of the polymer matrix in which the magnetic particles are embedded. The issue of the dependence of the macroscopic continuum energy of the domains on the randomness of the matrix is tackled in the framework of stochastic homogenization as explained below. In this context, as a byproduct of our analysis, one could see our main result as a generalization of a recent theorem by Braides and Piatnitski in \cite{BP} (see also Remark \ref{stoc-coef}).\\

We work in the variational framework of $\Gamma$-convergence (we refer to \cite{GCB, DM} for an introduction to the subject). To this end we identify the field $u$ with its piecewise-constant interpolation taking the value $u(x)$ on the Voronoi cell centered at $x$ and we regard the energies as defined on $L^1(D,\{\pm 1\})$. The $\Gamma$-limit is performed in this space. In Theorem \ref{mainthm1}, we prove that, for fixed $\w\in\Omega$, up to subsequences, the family $F_{\e}(\w)$ $\Gamma$-converges with respect to the $L^{1}(D)$- topology to a continuum energy $F:L^{1}(D)\to [0,+\infty]$ which is finite only on $BV(D,\{\pm 1\})$ where it takes the form
\begin{equation}\label{intro:representation}
F(\omega)(u)=\int_{S(u)\cap D}\phi_{\Lw}(x,\nu_u)\,\mathrm{d}\mathcal{H}^{n-1}. 
\end{equation}
Here $S(u)$ denotes the jump set of $u$, $\nu_{u}\in S^{n-1}$ its measure theoretic inner normal and $\mathcal{H}^{n-1}$ the $(n-1)$-dimensional Hausdorff measure. 
The result is proved by the abstract methods of Gamma-convergence and makes use of the integral representation theorem in \cite{BFLM}. 
We explore the dependence of the continuum energy on the randomness induced by the stochastic lattice in Theorem \ref{mainthm2}. Here we assume that 
the stochastic lattice is stationary, that is, for all $z\in\ZZ^n$, ${\mathcal L}(\omega)$ and ${\mathcal L}(\omega)+z$ have the same statistics and that 
there exist two functions $c_{nn},c_{lr}:\rn\rightarrow [0,+\infty)$ such that
\begin{equation}\label{intro:periodicityassumpt}
c_{nn}^{\e}(x,y)=c_{nn}(y-x),\quad c_{lr}^{\e}(x,y)=c_{lr}(y-x).
\end{equation}
These assumptions, which play the same role as periodicity in the case of a deterministic periodic lattice treated in \cite{AlGe14},
turn the problem of the characterization of the continuum limit energy into a stochastic homogenization problem. 

In Theorem \ref{mainthm2} we prove that the functionals $F_{\e}(\w)$ $\Gamma$-converge with respect to the $\lud$-topology to the functional $F_{\text{hom}}(\w):\lud\rightarrow [0,+\infty]$ which is finite on $BV(D,\{\pm 1\})$ where it takes the form
\begin{equation*}
F_{\text{hom}}(\w)(u)= \int_{S(u)\cap D} \phi_{\text{hom}}(\w;\nu_u)\,\mathrm{d}\mathcal{H}^{n-1} 
\end{equation*}
where, for $\mathbb{P}$-almost every $\w$ and for all $\nu\in S^{n-1}$, $\phi_{\text{hom}}(\w;\nu_u)$ is given by an asymptotic homogenization formula.
In case $\mathcal{L}$ is ergodic the limit energy is deterministic and its energy density $\phi_{\text{hom}}(\nu)$ is obtained by averaging over the probability space: 
\begin{equation*}
\phi_{\text{hom}}(\nu)=\int_{\Om}\phi_{\text{hom}}(\w;\nu_u)\,\mathrm{d}\mathbb{P}(\w).
\end{equation*}
The proof of this result is quite delicate and makes use of two main ingredients: the abstract methods of $\Gamma$-convergence and the subadditive ergodic theorem by Ackoglu and Krengel in \cite{ergodic}. The combination of these two results in the framework of discrete-to-continuum limits was one of the key ideas in the proof of the main result in \cite{ACG2} drawing some ideas from the pioneering paper \cite{DMM}. It consists in proving that the sequence of minimum problems characterizing the energy density of the $\Gamma$-limit at a certain point and in a given direction agrees (up to lower order terms) with a sequence of sub additive stochastic processes for which the main result in \cite{ergodic} applies. It is at this point that one strongly uses the assumptions on the stationarity of the lattice together with \eqref{intro:periodicityassumpt}. This step of the proof is the most delicate one and cannot be solved by the same arguments as in the Sobolev case considered in \cite{ACG2}. Instead it requires new arguments and the generalization to higher dimensions of the translation invariance of the first passage percolation formula (Proposition 2.10) in \cite{BP}. \\

A further important issue in the theory of magnetic polymer composite materials is the dependence of the macroscopic energy on the random geometry of the polymer matrix. We consider this problem in Section \ref{sect:last} where we remark that if the polymer matrix, besides satisfying the previous assumptions, is also isotropic in law, that is to say that $\Lw$ and $R\Lw$ have the same statistics for all $R\in SO(n)$, and the coefficients $c_{nn}$ and $c_{lr}$ are functions of the distances between points, then the limit energy density is isotropic, which means that $\phi_{\text{hom}}(\nu)=const$. In this case the energy of the continuum system is proportional to the length of the boundary of the Weiss domains. An example of stochastic lattice sharing this isotropy in law is the random parking process studied from the point of view of homogenization theory in \cite{glpe}. \\

Last but not least, we remark that the magnetic polymer composite materials present non trivial mechanical response to an applied magnetic field, a phenomenon known as strain-alignment coupling. A variational discrete to continuum analysis of this phenomenon, whose equivalent formulation in the periodic setting has been considered for instance in \cite{CDSZ}, is possible in this framework using some of the ideas contained in this paper and in \cite{ACG2}. We leave this topic for future studies.  \\

The paper is organized as follows:
section \ref{sect:pre} is devoted to basic notation, the definition of the class of energies we consider and to preliminary results regarding our functional setting. In section \ref{sec:intrep} we prove a compactness and integral representation result for our functionals for a fixed realization of the random lattice. Section \ref{sec:bdy} deals with the $\Gamma$-convergence of the discrete energies in presence of boundary conditions while in Section \ref{sec:sto} we prove the main result of this paper that is the $\Gamma$-convergence of random discrete energies. The last section is devoted to applications and generalizations of the previous results to multi-body magnetic interaction potentials and nonlocal energy functionals. 

\section{Notation and preliminaries}\label{sect:pre}
In this section we introduce some notation which we will use in the following and give a precise definition of the energies we consider. 
\\
\hspace*{0,5cm}
By $|\cdot|$ we denote the Euclidean norm on $\rn$. If $A\subset\rn$ is a Borel set we denote by $|A|$ its Lebesgue measure and, for any $\delta>0$, we set $A^{\delta}=A+B_{\delta}(0)$, where $B_{\delta}(x)$ is the open ball around $x$ with radius $\delta$ with respect to $|\cdot|$. Given an open set $D\subset\rn$ we denote by $\mathcal{A}(D)$ the family of all bounded open subsets of $D$ and by $\Ard$ the family of those sets in $\mathcal{A}(D)$ which have a Lipschitz boundary. Moreover, we set $\text{dim}_{\mathcal{H}}(\cdot)$ the Hausdorff dimension. Given a unit vector $\nu_1\in S^{n-1}$, let $\nu_1,\dots,\nu_n$ be a orthonormal basis. We define

\begin{equation*}
Q_{\nu}=\{x\in\rn:\;|\langle x,\nu_i\rangle|\leq\frac{1}{2}\quad\forall i\}
\end{equation*}

and, for $x\in\rn,\rho>0$, we set $Q_{\nu}(x,\rho):=x+\rho\, Q_{\nu}$. In the proofs $C$ denotes a generic constant that can change every time it appears.\\

\medskip

\noindent In this paper we consider as admissible networks those set of points fulfilling the following definition. 
\begin{definition}\label{defadmissible}
Let $\Sigma$ be a countable set of points in $\rn$. $\Sigma$ is called an admissible set of points if the following two conditions hold:
\begin{itemize}
\item[(i)] There exists $R>0$ such that $\inf_{x\in\rn} \#\left(\Sigma\cap B_R(x)\right)\geq 1$;
\item [(ii)] There exists $r>0$ such that $|x-y|\geq r$ for all $x,y\in\Sigma,x\neq y$.
\end{itemize}
\end{definition}
We remark that this class of lattices has been introduced in \cite{BLB, BLBL} (and already used in \cite{ACG2} in the context of $\Gamma$-convergence) as a reference configuration of nonlinear elastic networks. Roughly speaking the assumptions above rule out cluster points as well as arbitrary big holes in the network.
\begin{definition}
Let $\Sigma$ be a countable set of points in $\rn$. We denote by $\mathcal{V}(\Sigma)$ the so called Voronoi tessellation of $\rn$ associated with $\Sigma$, i.e. $\mathcal{V}(\Sigma):=\{C(x)\}_{x\in\Sigma}$, where
\begin{equation*}
C(x):=\{z\in\rn:\;|z-x|\leq|z-y|\;\forall y\in\Sigma\}.
\end{equation*}
\end{definition}

The next lemma contains all the information on the Voronoi cells that we will need throughout the paper. We outline its simple proof for readers' convenience.

\begin{lemma}\label{cellproperty}
Let $\Sigma$ be an admissible set of points with constants $r,R$ as in Definition \ref{defadmissible}. Then there exist constants $M_1,M_2>0$ depending only on $r,R$ such that, for all $x\in\Sigma$,
\begin{itemize}
\item [(i)] $B_{\frac{r}{2}}(x)\subset C(x)\subset B_R(x)$,
\item [(ii)] $\#\{y\in\Sigma:\;C(x)\cap C(y)\neq\emptyset\}\leq M_1$.
\item [(iii)] $\mathcal{H}^{n-1}(C(x)\cap C(y))\leq M_2\quad\forall y\in \Sigma\backslash\{x\}$,
\end{itemize}
\end{lemma}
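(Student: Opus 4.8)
The plan is to deduce all three properties from elementary distance estimates involving only the admissibility constants $r$ and $R$ of Definition \ref{defadmissible}; in particular $M_1$ and $M_2$ will depend only on $r$, $R$ and the dimension $n$. For the inner inclusion in (i), take $z\in B_{r/2}(x)$: for every $y\in\Sigma\setminus\{x\}$ the separation condition yields $|z-y|\ge|x-y|-|x-z|>r-\tfrac r2=\tfrac r2>|x-z|$, so $z$ is strictly closer to $x$ than to any other point of $\Sigma$, i.e. $z\in C(x)$. For the outer inclusion, given $z\in C(x)$ apply condition (i) of Definition \ref{defadmissible} to the ball $B_R(z)$ to find some $y\in\Sigma\cap B_R(z)$; then $|z-x|\le|z-y|<R$, hence $z\in B_R(x)$. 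This two-sided control of the cells is really the only geometric ingredient needed below.

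For (ii), if $C(x)\cap C(y)\ne\emptyset$ pick $z$ in the intersection; by (i) we get $|x-y|\le|x-z|+|z-y|<2R$, so every cell that meets $C(x)$ is centred in $\Sigma\cap B_{2R}(x)$, and it suffices to bound $\#(\Sigma\cap B_{2R}(x))$. Here I would invoke the standard packing estimate: by the separation condition the balls $B_{r/2}(y)$, $y\in\Sigma\cap B_{2R}(x)$, are pairwise disjoint and all contained in $B_{2R+r/2}(x)$, so comparing Lebesgue measures gives $\#(\Sigma\cap B_{2R}(x))\le\big(\tfrac{2R+r/2}{r/2}\big)^n=:M_1$. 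For (iii), let $x\ne y$ in $\Sigma$ and $z\in C(x)\cap C(y)$: then $|z-x|\le|z-y|$ and $|z-y|\le|z-x|$, hence $|z-x|=|z-y|$, so $C(x)\cap C(y)$ lies in the perpendicular bisector hyperplane $H_{xy}$ of $x$ and $y$; by (i) it also lies in $B_R(x)$. Therefore $C(x)\cap C(y)$ is contained in an $(n-1)$-dimensional ball of radius $R$, and I would conclude $\mathcal H^{n-1}(C(x)\cap C(y))\le M_2$, where $M_2$ is the $\mathcal H^{n-1}$-measure of such a ball.

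I do not expect any real obstacle in this lemma: the only step that is more than a one-line computation is the volume-counting argument for (ii), which is entirely routine. The content of the statement is simply the bookkeeping fact that admissibility forces every Voronoi cell to be sandwiched between balls of radii $\tfrac r2$ and $R$, after which (ii) and (iii) follow at once.
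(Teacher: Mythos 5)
Your proof is correct and follows essentially the same route as the paper's: the same two-sided ball inclusions for the cells, the same $|x-y|<2R$ observation followed by a packing/volume-comparison count yielding $M_1=(1+4R/r)^n$, and the same containment of $C(x)\cap C(y)$ in a hyperplane intersected with a ball for (iii). The only cosmetic differences are that you argue the outer inclusion in (i) directly rather than by contradiction and obtain the slightly sharper constant $M_2=\omega_{n-1}R^{n-1}$ instead of $(2R)^{n-1}\omega_{n-1}$.
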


\begin{proof} (i) For $y\in\Sigma\backslash\{x\}$ we have $|x-y|\geq r$, which implies $|z-x|\leq |z-y|$ for all $z\in B_{\frac{r}{2}}(x)$. By definition the first inclusion in (i) holds. Now suppose that there exists $z\in C(x)$ such that $|z-x|\geq R$. Since $\Sigma$ is admissible, there exists $y\in\Sigma$ such that $|z-y|<R$. It follows that

$$R\leq |z-x|\leq|z-y|<R,$$

leading to a contradiction.\\
(ii) Note that (i) implies that if $C(x)\cap C(y)\neq\emptyset$, then $|x-y|\leq 2R$. Using an elementary covering argument it is now easy to see that it is enough to take $M_1=\left(1+\frac{4R}{r}\right)^n$.\\
(iii) By (i) the diameter of the set $C(x)\cap C(y)$ is bounded by $2R$ and the set is contained in a $(n-1)$-dimensional affine subspace so that we can take $M_2=(2R)^{n-1}\omega_{n-1}$, where $\omega_{n-1}$ is the volume of the unit ball in $\mathbb{R}^{n-1}$.

\end{proof}

Let $D\subset\rn$ be a bounded open set with Lipschitz boundary and let $\Sigma$ be an admissible set of points according to Definition \ref{defadmissible}. Making use of the Voronoi tessellation we introduce the notion of nearest neighbors.

\begin{definition}\label{nn}
The set of nearest neighbors of $\Sigma$ is defined by
$$\NNS:=\{(x,y)\in\Sigma^2:\;\dim_{\mathcal{H}}\left(C(x)\cap C(y)\right)=n-1\}.$$
\end{definition}

\hspace*{0,5cm}
We are now ready to introduce the most general class of discrete energies we are going to consider in this paper. For fixed $\e>0$ and $u:\e\Sigma\rightarrow\{\pm 1\}$, we set
$$F_{\e}(u):=F_{nn,\e}(u,D)+F_{lr,\e}(u,D),$$
where for every $A\in\mathcal{A}(\rn)$
\begin{align}
F_{nn,\e}(u,A)&=\sum_{\substack{(x,y)\in\NNS \\ \e x,\e y\in A}}\e^{n-1} c_{nn}^{\e}(x,y)|u(\e x)-u(\e y)|,\label{Fnn}\\
F_{lr,\e}(u,A)&=\sum_{\substack{(x,y)\notin\NNS \\ \e x,\e y\in A}}\e^{n-1} c_{lr}^{\e}(x,y)|u(\e x)-u(\e y)|.\label{Flr}
\end{align}
The functions $c_{nn}^{\e},c_{lr}^{\e}:\rn\times\rn\rightarrow [0,+\infty)$ fulfill the following assumptions:\\

\medskip

\noindent \textbf{Hypothesis 1 }There exist $C>0$ and a decreasing function $J_{lr}:[0,+\infty)\rightarrow[0,+\infty)$ with 

\begin{equation*}
\int_{\rn}J_{lr}(|x|)|x|\dx =J<+\infty
\end{equation*}

such that, for all $\varepsilon>0$ and all $x,y\in\rn$,
\begin{align*}
&\frac{1}{C}\leq c_{nn}^{\e}(x,y)\leq C,  \\
&c_{lr}^{\e}(x,y)\leq J_{lr}(|x-y|). 
\end{align*}

\hspace*{0,5cm}
As it is customary in the context of the discrete-to-continuum variational limit, with the aim of exploiting $\Gamma$-convergence, we identify each function $u:\e\Sigma\rightarrow\{\pm 1\}$ with its constant interpolation on each Voronoi cell. Setting

\begin{equation}\label{CepsSig}
C_{\e}(\Sigma):=\{u:\rn\rightarrow\{\pm 1\}:\;\forall C\in\mathcal{V}(\Sigma),\,u_{|\e C} \text{ is constant}\}\subset \lud,
\end{equation}
we can consider the functionals $F_{\e}:\lud\rightarrow [0,+\infty]$ defined as

\begin{equation}\label{defL1}
F_{\e}(u):=
\begin{cases}
F_{nn,\e}(u,D)+F_{lr,\e}(u,D) &\mbox{if $u\in C_{\e}(\Sigma)$,}\\
+\infty &\mbox{otherwise.}
\end{cases}
\end{equation}

With the aim of applying the abstract methods of $\Gamma$-convergence we also need to define local versions of the energies $F_{\e}$ and of its $\Gamma\hbox{-}\liminf$ and $\Gamma\hbox{-}\limsup$ as $\e\to 0$.

\begin{definition}\label{localenergy}
For every $A\in\mathcal{A}^R(\rn)$, let $F_{\e}(\cdot,A):L^1(D)\rightarrow [0,+\infty]$ be defined by

\begin{align*}
F_{\e}(u,A)=
\begin{cases}
F_{nn,\e}(u,A)+F_{lr,\e}(u,A) &\mbox{if $u\in C_{\e}(\Sigma)$,}\\
+\infty &\mbox{otherwise,}
\end{cases}
\end{align*}

where $F_{nn,\e}(u,A)$ and $F_{lr,\e}(u,A)$ are defined as in (\ref{Fnn}) and in (\ref{Flr}). Furthermore we set
\begin{align*}
F^{\prime}(u,A)&:=\Gamma(\lud)-\liminf_{\e\to 0}F_{\e}(u,A),\\
F^{\prime\prime}(u,A)&:=\Gamma(\lud)-\limsup_{\e\to 0}F_{\e}(u,A).
\end{align*}
\end{definition}

\begin{remark}\label{localgamma}\rm{
One can show that 

\begin{align*}
F^{\prime}(u,A)&=\Gamma(L^1(A))-\liminf_{\e\to 0}F_{\e}(u,A),\\
F^{\prime\prime}(u,A)&=\Gamma(L^1(A))-\limsup_{\e\to 0}F_{\e}(u,A)
\end{align*}

for every $u\in \lud$.
}
\end{remark} 
Following some ideas in \cite{ACG2} we introduce an auxiliary deterministic square lattice on which we will conveniently rewrite the energies $F_{\e}$. This lattice will turn out to be a convenient way in order to provide uniform (with respect to the stochastic variable) estimates on the discrete energies.\\

On setting  $r^{\prime}=\frac{r}{\sqrt{n}}$ it follows that for all $\alpha\in r^{\prime}\mathbb{Z}^n$ it holds $\#\{\Sigma\cap\{\alpha+[0,r^{\prime})^n\}\}\leq 1$. We now set
\begin{align*}
\mathcal{Z}_{r^{\prime}}(\Sigma):=&\{\a\in r^{\prime}\mathbb{Z}^n:\;\#\left(\Sigma\cap \{\a+[0,r^{\prime})^n\}\right)=1\},
\\
x_{\a}:=& \Sigma\cap\{\a+[0,r^{\prime})^n\},\quad\a\in\mathcal{Z}_{r^{\prime}}(\Sigma)
\end{align*}
and, for $\xi\in r^{\prime}\mathbb{Z}^n,\,U\subset\rn$ and $\e>0$, 
\begin{align*}
R^{\xi}_{n\!n,\e}(U)&:=\{\a:\;\a,\a+\xi\in\mathcal{Z}_{r^{\prime}}(\Sigma),\,\e x_{\a},\e x_{\a+\xi}\in U,\,(x_{\a},x_{\a+\xi})\in \NNS\},
\\
R^{\xi}_{lr,\e}(U)&:=\{\a:\;\a,\a+\xi\in\mathcal{Z}_{r^{\prime}}(\Sigma),\,\e x_{\a},\e x_{\a+\xi}\in U,\,(x_{\a},x_{\a+\xi})\notin \NNS\}.
\end{align*}

We can now rewrite the energy as
 
\begin{align*}
F_{nn,\e}(u,A)&=\sum_{\xi\in \rzn}\sum_{\a\in R^{\xi}_{n\!n,\e}(A)}\e^{n-1}c_{nn}^{\e}(x_{\a},x_{\a+\xi})|u(x_{\a})-u(\e x_{\a+\xi})|,\\
F_{lr,\e}(u,A)&=\sum_{\xi\in \rzn}\sum_{\a\in R^{\xi}_{lr,\e}(A)}\e^{n-1}c_{lr}^{\e}(x_{\a},x_{\a+\xi})|u(\e x_{\a})-u(\e x_{\a+\xi})|.
\end{align*}

For technical reasons in the proofs it will be useful to have at our disposal a family of ``truncated'' long range energies that we introduce below. For any $M>0$ we set 

\begin{equation}\label{truncenergy}
F^M_{lr,\e}(u,A)=\sum_{|\xi|\leq M}\sum_{\a\in R^{\xi}_{lr,\e}(A)}\e^{n-1}c_{lr}^{\e}(x_{\a},x_{\a+\xi})|u(\e x_{\a})-u(\e x_{\a+\xi})|.
\end{equation}

In what follows we will use the following compactness result for BV-functions taking values in $\{\pm 1\}$ (see \cite{AmBrII} and \cite{AFP} for a general reference)

\begin{theorem}\label{compactemb}
Let $U\subset\rn$ be open and let $u_k\in BV(U,\{\pm 1\})$ be such that

$$\sup_k\mathcal{H}^{n-1}(S(u_k))<+\infty.$$

Then there exists a subsequence $u_k$ (not relabeled) and $u\in BV(U,\{\pm 1\})$ such that $u_k\to u$ strongly in $L^1(U)$.
\end{theorem}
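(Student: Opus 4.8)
The plan is to reduce the statement to the classical compactness theorem for sets of finite perimeter. For each $k$ write $u_k = 2\chi_{E_k} - 1$ with $E_k := \{x\in U:\, u_k(x)=1\}$. Since $u_k$ takes only the values $\pm 1$, its distributional gradient has neither an absolutely continuous part nor a Cantor part, so $Du_k = (u_k^+-u_k^-)\,\nu_{u_k}\,\mathcal{H}^{n-1}\!\res S(u_k)$ and hence $|Du_k|(U) = 2\,\mathcal{H}^{n-1}(S(u_k))$, i.e. $\mathrm{Per}(E_k;U)=\mathcal{H}^{n-1}(S(u_k))$. The hypothesis thus yields $\sup_k \mathrm{Per}(E_k;U) < +\infty$, while trivially $\|\chi_{E_k}\|_{L^1(V)}\le |V|$ for every bounded $V\Subset U$; consequently $(\chi_{E_k})$ is bounded in $BV(V)$ for every such $V$.

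Next I would invoke the standard $BV$-compactness theorem (see \cite{AmBrII, AFP}). Applying it on an increasing exhaustion $V_1\Subset V_2\Subset\cdots$ of $U$ by bounded Lipschitz sets and extracting a diagonal subsequence (not relabeled), one obtains a function $v\in L^1_{\mathrm{loc}}(U)$ with $\chi_{E_k}\to v$ in $L^1_{\mathrm{loc}}(U)$. Since each $\chi_{E_k}$ is $\{0,1\}$-valued and a further subsequence converges a.e., the limit satisfies $v\in\{0,1\}$ a.e.; write $v=\chi_E$. Lower semicontinuity of the perimeter with respect to $L^1_{\mathrm{loc}}$-convergence gives $\mathrm{Per}(E;U)\le\liminf_k \mathrm{Per}(E_k;U)<+\infty$, so $E$ has finite perimeter in $U$ and $u:=2\chi_E-1\in BV(U,\{\pm 1\})$, with $u_k\to u$ in $L^1_{\mathrm{loc}}(U)$.

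Finally I would upgrade $L^1_{\mathrm{loc}}$-convergence to $L^1(U)$-convergence. This is automatic when $U$ is bounded (which is the situation relevant throughout the paper, where $U$ is a cube or a bounded Lipschitz set): the sequence $(u_k)$ is equibounded by $1$, so along the a.e.-convergent subsequence dominated convergence applies; equivalently one uses directly the global form of the compactness theorem on the bounded Lipschitz set $U$. The main point to be careful about is precisely this last passage: for an unbounded $U$ the conclusion may fail (translate a fixed set of finite perimeter off to infinity), so the statement is to be read — as in the cited references — with $U$ bounded with Lipschitz boundary, and under that reading no further argument is needed. Everything else is a routine translation between $\{\pm 1\}$-valued $BV$ functions and sets of finite perimeter.
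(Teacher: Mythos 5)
Your argument is correct, and there is nothing in the paper to compare it against step by step: Theorem \ref{compactemb} is not proved in the paper at all, it is quoted as a known compactness result for $\{\pm 1\}$-valued $BV$ functions with references to \cite{AmBrII} and \cite{AFP}. What you write -- identifying $u_k=2\chi_{E_k}-1$, noting that $Du_k=2D\chi_{E_k}$ is purely a jump measure so that $|Du_k|(U)=2\,\mathcal{H}^{n-1}(S(u_k))$ and hence $\sup_k\mathrm{Per}(E_k;U)<+\infty$, extracting a diagonal subsequence converging in $L^1_{\rm loc}(U)$ by the classical compactness theorem for sets of finite perimeter on an exhaustion by Lipschitz subsets, identifying the limit as $\chi_E$ with $E$ of finite perimeter by lower semicontinuity, and upgrading to $L^1(U)$ by dominated convergence -- is exactly the standard proof underlying the cited result, so your proposal is a legitimate self-contained substitute for the citation rather than a different route.

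Two minor sharpenings. First, your closing caveat is in the right spirit but slightly stronger than needed: with the usual convention $BV(U)\subset L^1(U)$, the only thing the final upgrade requires is $|U|<+\infty$, since $|u_k-u|\le 2$ and the constant $2$ is then an integrable dominating function; no Lipschitz regularity of $\partial U$ enters your argument anywhere. Moreover, if $|U|=+\infty$ the hypothesis $u_k\in BV(U,\{\pm 1\})$ is vacuous under that convention, so the ``translate a set off to infinity'' example only endangers a relaxed ($BV_{\rm loc}$-type) reading of the statement. Second, in the paper the theorem is in fact only applied on bounded sets (for instance on $A_\eta\subset D$ in the proof of Proposition \ref{limitcoercivity}), so your reading is consistent with every use made of it.
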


Moreover, the following result on Lipschitz domains turns out to be useful for our proofs (see Theorem 4.1 in \cite{doktor}).

\begin{theorem}\label{lipapprox}
Let $A\in\mathcal{A}^R(\R^n)$. Given $\eta>0$ let $A^{\eta}:=\{x\in \R^{n}:\;\dist(x, A)< \eta\}$. Then, for $\eta$ small enough, $A^{\eta}$ is again a Lipschitz domain and 

\begin{equation*}
\lim_{\eta\to 0}\mathcal{H}^{n-1}(\partial A^{\eta})=\mathcal{H}^{n-1}(\partial A).
\end{equation*} 
\end{theorem}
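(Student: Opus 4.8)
The plan is to reduce the statement to a local flattening argument combined with a standard area/coarea estimate. First I would use the fact that $A\in\mathcal A^R(\R^n)$ means $\partial A$ is covered by finitely many open cylinders $U_1,\dots,U_N$ in each of which, after a rigid motion, $A\cap U_j$ is the subgraph of a Lipschitz function $g_j$, i.e. $A\cap U_j=\{(x',x_n)\in U_j:\;x_n<g_j(x')\}$ with $\mathrm{Lip}(g_j)\le L$. In this local chart the set $A^\eta=\{x:\,\mathrm{dist}(x,A)<\eta\}$ is again a subgraph: one checks that the distance function to a Lipschitz subgraph produces, for $\eta$ small (smaller than a threshold depending on $L$ and the geometry of the cylinders), a new subgraph of a Lipschitz function $g_j^\eta$ with $\mathrm{Lip}(g_j^\eta)\le L$ as well, and $g_j^\eta\to g_j$ uniformly as $\eta\to 0$. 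This already gives that $A^\eta$ is a Lipschitz domain for $\eta$ small. The only subtlety here is that $A^\eta$ is a fattening in \emph{all} directions, so near the graph one must verify that the boundary of $A^\eta$ is still described by moving the graph in the $x_n$–direction only (plus possibly flattening it where the normal is nearly horizontal); this is where one invokes the uniform cone condition implied by the Lipschitz bound $L$.

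Next I would handle the convergence of the surface measures. With $\partial A^\eta$ locally the graph of $g_j^\eta$, the area formula gives
\begin{equation*}
\mathcal H^{n-1}(\partial A^\eta\cap U_j)=\int_{\pi(U_j)}\sqrt{1+|\nabla g_j^\eta(x')|^2}\,dx',
\end{equation*}
where $\pi$ denotes projection onto the first $n-1$ coordinates. Since $\mathrm{Lip}(g_j^\eta)\le L$ uniformly, the integrands are bounded by $\sqrt{1+L^2}$, and one shows $\nabla g_j^\eta\to\nabla g_j$ in, say, $L^1_{\mathrm{loc}}$ (or at least weak-$*$ in $L^\infty$ together with a.e.\ convergence, which follows from the explicit formula for the distance-function subgraph). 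Dominated convergence then yields $\mathcal H^{n-1}(\partial A^\eta\cap U_j)\to\mathcal H^{n-1}(\partial A\cap U_j)$. Summing over the finitely many charts, after a partition-of-unity bookkeeping to avoid double counting of the overlaps $U_i\cap U_j$, gives $\lim_{\eta\to0}\mathcal H^{n-1}(\partial A^\eta)=\mathcal H^{n-1}(\partial A)$.

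The main obstacle, and the step I would spend the most care on, is the claim that the fattened set $A^\eta$ is again a Lipschitz subgraph in each chart with a uniform Lipschitz constant: one must quantify how small $\eta$ has to be in terms of the Lipschitz constant $L$ of the charts and the diameters/overlaps of the covering cylinders, and rule out the appearance of new boundary components or self-intersections of the offset surface. This is purely a matter of elementary (but slightly tedious) Euclidean geometry of offsets of Lipschitz graphs, and since the statement is quoted from Theorem~4.1 of \cite{doktor}, I would simply cite that reference for the detailed verification rather than reproduce it.
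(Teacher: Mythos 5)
The paper does not prove this statement at all: it is quoted as Theorem 4.1 of \cite{doktor}, so the only ``proof'' in the text is the citation, and your plan, which also ends by deferring to \cite{doktor}, is in that sense consistent with what the authors do. Your outline is moreover the standard route one would take if one did write the argument out: cover $\partial A$ by finitely many cylinders in which $A$ is an $L$-Lipschitz subgraph, observe that the fattened set is again an $L$-Lipschitz subgraph there for $\eta$ below a threshold depending on $L$ and the covering (a clean way to see the Lipschitz bound: $S$ is the subgraph of an $L$-Lipschitz function iff $S+C=S$ for the open cone $C=\{x_n<-L|x'|\}$, and this is preserved by $S\mapsto S+B_{\eta}(0)$), and then pass to the limit in the local area integrals and sum with a partition of unity.

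The one step you should not present as automatic is the convergence of the areas. Uniform convergence $g_j^{\eta}\to g_j$ together with the uniform bound $\mathrm{Lip}(g_j^{\eta})\leq L$ yields only weak-$*$ convergence of $\nabla g_j^{\eta}$, and since $p\mapsto\sqrt{1+|p|^2}$ is convex this gives only $\liminf_{\eta\to 0}\mathcal{H}^{n-1}(\partial A^{\eta})\geq\mathcal{H}^{n-1}(\partial A)$, which one gets anyway from $L^1$-convergence of the characteristic functions and lower semicontinuity of the perimeter. The substantive direction is the $\limsup$ inequality, i.e. precisely the a.e. (or $L^1_{\rm loc}$) convergence of the gradients that you assert. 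It is true, but it has to be extracted from the representation $g^{\eta}(x')=\sup_{|z'-x'|\leq\eta}\bigl(g(z')+\sqrt{\eta^2-|z'-x'|^2}\bigr)$ by a genuine argument: at a differentiability point $x_0'$ of $g$ with $\nabla g(x_0')=p$, one shows that any maximizer $z'_{\eta}$ satisfies $(z'_{\eta}-x_0')/\eta\to p/\sqrt{1+|p|^2}$ (rescale and use strict concavity of $w\mapsto p\cdot w+\sqrt{1-|w|^2}$), and then, since the corresponding spherical cap touches $g^{\eta}$ from below at $x_0'$, that $\nabla g^{\eta}(x_0')=(z'_{\eta}-x_0')/\sqrt{\eta^2-|z'_{\eta}-x_0'|^2}\to p$ at a.e. such point; dominated convergence then closes the chart-wise limit. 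This maximizer/envelope computation (together with the quantitative ``$\eta$ small uniformly over the charts'' bookkeeping you mention) is exactly the content of the cited theorem, so relying on \cite{doktor} is legitimate, but as a standalone proof your sketch leaves its crux unproved.
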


\begin{remark}\label{alsointerior}
Applying Theorem \ref{lipapprox} to $B_R(0)\backslash A$ with $R$ large enough, we obtain the same result for the set $A_{\eta}:=\{x\in A:\;\dist(x,\partial A)>\eta\}$.
\end{remark}
	
\section{Integral representation}\label{sec:intrep}
We want to make use of the following integral representation theorem which is a special case of Theorem 3 in \cite{BFLM} in the case that the domain of the functional is the space $BV(D,\{\pm 1\})\times\mathcal{A}(D)$ and the functional fulfills a further symmetry property.

\begin{theorem}\label{fonseca}
Let ${\mathcal F}: BV(D,\{\pm 1\})\times {\mathcal A}(D)\to [0,+\infty)$ satisfy for every $(u, A)\in BV(D,\{\pm 1\})\times {\mathcal A}(D)$ the following hypotheses:
\begin{itemize}

\item[(i)] ${\mathcal F}(u, \cdot)$ is the restriction to ${\mathcal A}(D)$ of a Radon measure;

\item[(ii)] ${\mathcal F}(u, A)={\mathcal F}(v, A)$ whenever $u = v$ a.e. on $A\in{\mathcal A}(D)$;

\item[(iii)] ${\mathcal F}(\cdot, A)$ is $\lud$ lower semicontinuous;

\item[(iv)] there exists $C>0$ such that
$$
\frac 1 C \mathcal{H}^{n-1}(S(u)\cap A)\leq  {\mathcal F}(u, A) \leq C \mathcal{H}^{n-1}(S(u)\cap A);
$$

\item[(v)] ${\mathcal F}(u,A)={\mathcal F}(-u,A)$.

\end{itemize}

Then for every $u\in BV(D,\{\pm 1\})$ and $A\in{\mathcal A}(D)$
$$
{\mathcal F}(u, A)=\int_{S(u)\cap A}g(x, \nu_u)\, \mathrm{d}\mathcal{H}^{n-1},
$$
with
$$
g(x_0,\nu)=\limsup_{\rho\to 0}\frac{m(u_{x_0,\nu}, Q_\nu(x_0,\rho))}{\rho^{n-1}},
$$
where
\begin{align*}
u_{x_0,\nu}:=
\begin{cases}
1 & \mbox{if $\langle x-x_0,\nu\rangle\geq 0$,} \\
-1 & \mbox{otherwise,}
\end{cases}
\end{align*}
and for any $(v,A)\in BV(D,\{\pm 1\})\times {\mathcal A}(D)$ we set
\begin{equation*}
m(v,A)= \inf\{ {\mathcal F}(w, A): w\in BV(A;\{\pm 1\}),\ w=v\ \hbox{in a neighborhood of } \partial A\}.
\end{equation*}

\end{theorem}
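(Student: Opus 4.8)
\textbf{Proof strategy for Theorem \ref{fonseca}.}
The plan is to verify that Theorem 3 of \cite{BFLM} applies in the present restricted setting, and then to observe that hypothesis (v) lets us drop the $u$-dependence that would otherwise be absorbed into a more general surface density. The general integral representation theorem in \cite{BFLM} yields, under hypotheses (i)--(iv) (suitably rephrased for functionals defined on $BV(D,\{\pm1\})$ rather than on $SBV$ with a full bulk--surface structure), a representation of the form
\begin{equation*}
{\mathcal F}(u,A)=\int_{S(u)\cap A}g(x,u^+,u^-,\nu_u)\,\mathrm{d}\mathcal{H}^{n-1},
\end{equation*}
where $u^{\pm}$ are the two traces of $u$ at a jump point. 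First I would recall that for a $\{\pm1\}$-valued function the only admissible pairs of traces are $(1,-1)$ and $(-1,1)$, so $g$ depends on $x$, on $\nu_u$, and on the choice of which of the two values lies on the positive side of $\nu_u$; flipping $\nu_u$ to $-\nu_u$ exchanges $u^+$ and $u^-$, so $g(x,1,-1,\nu)=g(x,-1,1,-\nu)$ by the very definition of a surface density, and there is genuinely one scalar function of $(x,\nu)$ once we fix the orientation convention $u^+\cdot\nu$-side $=+1$, i.e. we may write $g=g(x,\nu)$ where $\nu$ is the measure-theoretic inner normal to $\{u=1\}$.

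Next I would use hypothesis (v). Applying the (already-established) representation to $-u$, whose jump set coincides with $S(u)$ but whose inner normal is $-\nu_u$, gives ${\mathcal F}(-u,A)=\int_{S(u)\cap A}g(x,-\nu_u)\,\mathrm{d}\mathcal{H}^{n-1}$, while ${\mathcal F}(u,A)=\int_{S(u)\cap A}g(x,\nu_u)\,\mathrm{d}\mathcal{H}^{n-1}$. Since by (v) these two integrals agree for every $A\in{\mathcal A}(D)$ and every $u\in BV(D,\{\pm1\})$, and since $\mathcal{H}^{n-1}\res S(u)$ ranges over a rich enough class of measures (e.g. by considering half-space configurations $u_{x_0,\nu}$ localized in small cubes), a standard localization/Lebesgue-point argument forces $g(x,\nu)=g(x,-\nu)$ for $\mathcal{H}^{n-1}$-a.e. $x$ and all $\nu$; thus $g$ is even in $\nu$, which is exactly the symmetry needed so that the orientation convention above is immaterial. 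The identification of $g$ through the cell formula
\begin{equation*}
g(x_0,\nu)=\limsup_{\rho\to 0}\frac{m(u_{x_0,\nu},Q_\nu(x_0,\rho))}{\rho^{n-1}}
\end{equation*}
is then inherited verbatim from \cite{BFLM}: one tests ${\mathcal F}$ against the canonical jump profile $u_{x_0,\nu}$ on shrinking cubes oriented along $\nu$, using (i)--(iv) to guarantee that the $\limsup$ exists, is finite, bounded below, and reproduces the surface density at $\mathcal{H}^{n-1}$-almost every $x_0\in S(u)$.

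The only real work is checking that the hypotheses (i)--(v) stated here are precisely (a specialization of) those required by Theorem 3 in \cite{BFLM}: namely that (i) the measure property, (ii) locality, (iii) $L^1$-lower semicontinuity and (iv) the two-sided bound by $\mathcal{H}^{n-1}\res S(u)$ are exactly what that theorem consumes when one restricts attention to functionals that see no bulk (Lebesgue) part and no Cantor part — which is automatic for $\{\pm1\}$-valued $BV$ functions, whose distributional gradient is purely a jump measure. I expect the main obstacle to be bookkeeping rather than conceptual: one must make sure that the growth hypothesis in \cite{BFLM}, which is typically stated with a density bounded between $c_1(|u^+-u^-|+\dots)$ and $c_2(|u^+-u^-|+\dots)$, collapses to (iv) here because $|u^+-u^-|\equiv 2$ on $S(u)$, and that the class of competitors in the definition of $m(v,A)$ (functions attaining $v$ near $\partial A$) matches the one used there. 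Once this matching is in place, the representation and the formula for $g$ follow, and hypothesis (v) upgrades the density to an even function of $\nu$, completing the proof.
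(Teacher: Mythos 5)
Your proposal is correct and follows essentially the same route as the paper: the paper gives no independent proof of Theorem \ref{fonseca}, presenting it precisely as a special case of Theorem 3 in \cite{BFLM} for functionals on $BV(D,\{\pm 1\})\times\mathcal{A}(D)$ with the extra symmetry (v), which is exactly your reduction (no bulk or Cantor part, traces forced to $\pm 1$ so the density collapses to $g(x,\nu)$, and (v) making $g$ even in $\nu$ so the orientation convention is immaterial). Your bookkeeping of the growth condition and of the cell formula matches what the citation implicitly requires, so nothing is missing.
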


\hspace*{0,5cm}
The following theorem is the main result of this section. 

\begin{theorem}\label{mainthm1}
Let $\Sigma$ be admissible and let $c_{nn}^{\e}$ and $c_{lr}^{\e}$ satisfy Hypothesis 1. For every sequence $\e_n\to 0^+$ there exists a subsequence $\e_{n_k}$ such that the functionals $F_{\e_{n_k}}$ defined in \eqref{defL1} $\Gamma$-converge with respect to the strong $\lud$-topology to a functional $F:\lud\rightarrow [0,+\infty]$ of the form

\begin{equation*}
F(u)=
\begin{cases}
\int_{S(u)}\phi_{\Sigma}(x,\nu_u)\,\mathrm{d}\mathcal{H}^{n-1} &\mbox{if $u\in BV(D,\{\pm 1\})$,} \\
+\infty &\mbox{otherwise.}
\end{cases}
\end{equation*}

Moreover a local version of the statement above holds: For all $u\in BV(D,\{\pm 1\})$ and all $A\in\Ard$

\begin{equation*}
\Gamma\hbox{-}\lim_k F_{\e_{n_k}}(u,A)=\int_{S(u)\cap A}\phi_{\Sigma}(x,\nu_u)\,\mathrm{d}\mathcal{H}^{n-1}.
\end{equation*}
\end{theorem}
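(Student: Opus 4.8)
The plan is to verify the hypotheses of the integral representation theorem (Theorem \ref{fonseca}) for a suitable $\Gamma$-limit functional and then read off the representation. First I would fix the sequence $\e_n\to 0$ and, using a diagonal argument over a countable dense subcollection of $\mathcal{A}^R(D)$ together with the compactness of $\Gamma$-convergence for a separable topology, extract a subsequence $\e_{n_k}$ along which $F'(\cdot,A)=F''(\cdot,A)=:\mathcal{F}(\cdot,A)$ exists as a $\Gamma$-limit for every $A$ in that collection; the localization methods of De Giorgi--Letta then upgrade this to a genuine $\Gamma$-limit on all of $\mathcal{A}^R(D)$. The key preliminary I would establish is the \emph{compactness and coerciveness} estimate: for $u\in C_\e(\Sigma)$, using Lemma \ref{cellproperty} the nearest-neighbour part $F_{nn,\e}(u,A)$ is, up to constants depending only on $r,R,C$, comparable to the perimeter of $\{u=1\}$ in (a slight enlargement of) $A$, so that bounded-energy sequences are precompact in $L^1$ by Theorem \ref{compactemb}, and the $\Gamma$-limit inherits the bound $\frac1C\mathcal{H}^{n-1}(S(u)\cap A)\le\mathcal{F}(u,A)\le C\mathcal{H}^{n-1}(S(u)\cap A)$ — this gives hypothesis (iv), and incidentally forces $\mathcal{F}$ to be finite precisely on $BV(D,\{\pm1\})$. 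The long-range part is controlled by Hypothesis 1: the summability $\int_{\rn}J_{lr}(|x|)|x|\dx<\infty$ ensures $F_{lr,\e}(u,D)\le C J\,\mathcal{H}^{n-1}(S(u))$ via a counting argument on the auxiliary lattice $r'\ZZ^n$, so it contributes only a bounded surface-type term and does not spoil (iv).

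Next I would check hypotheses (i)--(iii) and (v). Hypothesis (v), the symmetry $\mathcal{F}(u,A)=\mathcal{F}(-u,A)$, is immediate from the structure $|u(\e x)-u(\e y)|$ of the discrete energies, which is invariant under $u\mapsto -u$, and passes to the $\Gamma$-limit. Hypothesis (ii), locality, follows because if $u=v$ a.e.\ on $A$ then (for $\e$ small) their piecewise-constant representatives agree on all Voronoi cells meeting a slightly shrunken $A$, so the $\Gamma$-liminf/limsup over $L^1(A)$ — which by Remark \ref{localgamma} is the same as the one over $L^1(D)$ — coincide; the truncated energies $F^M_{lr,\e}$ are useful here to make the finite-range dependence precise, with the tail controlled uniformly in $\e$ by the $J_{lr}$ bound. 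Hypothesis (iii), $L^1$-lower semicontinuity of $\mathcal{F}(\cdot,A)$, is automatic since $\mathcal{F}(\cdot,A)$ is a $\Gamma$-limit with respect to the $L^1$-topology. The measure property (i) is the substantive point: I would prove that $A\mapsto\mathcal{F}(u,A)$ is inner regular, superadditive, and subadditive, hence (by the De Giorgi--Letta criterion combined with the upper bound in (iv)) the trace of a Radon measure. Superadditivity is clear from the set-additivity of the discrete sums over disjoint regions; the crux is \emph{subadditivity}, for which I would use the standard fundamental-estimate / De Giorgi slicing argument: given recovery sequences on $A'$ and $B'$ for open sets with $A'\Subset A$, one cuts and pastes them across a band of width $\sim\delta$ using cut-off functions, and the error terms are (a) nearest-neighbour interactions across the band, bounded by $C\,\mathcal{H}^{n-1}(\partial(\text{band}))$ which is small, plus (b) long-range interactions straddling the band, bounded using $\int J_{lr}(|x|)|x|\dx<\infty$ by a term that vanishes as the band width and $M$ are sent to infinity appropriately — this is exactly where the truncated energies $F^M_{lr,\e}$ and Hypothesis 1 do their work.

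Once hypotheses (i)--(v) are verified, Theorem \ref{fonseca} yields $\mathcal{F}(u,A)=\int_{S(u)\cap A}\phi_\Sigma(x,\nu_u)\,\mathrm{d}\mathcal{H}^{n-1}$ with $\phi_\Sigma(x_0,\nu)=\limsup_{\rho\to0}\rho^{1-n}m(u_{x_0,\nu},Q_\nu(x_0,\rho))$, and since this holds simultaneously for all $A\in\mathcal{A}^R(D)$ along the fixed subsequence (in particular for $A=D$ after a further approximation of $D$ by interior Lipschitz subdomains, using Theorem \ref{lipapprox} and Remark \ref{alsointerior} to match the $\mathcal{H}^{n-1}$-measures of the boundaries), both the global and the local assertions of Theorem \ref{mainthm1} follow. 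I expect the \textbf{main obstacle} to be the subadditivity step in establishing the measure property (i): reconciling the nonlocal (all-pairs) structure of $F_{lr,\e}$ with a cut-and-paste construction requires carefully quantifying the interaction energy across the transition layer, controlling it by the decreasing tail of $J_{lr}$, and choosing the interplay of the layer width, the truncation parameter $M$, and $\e\to0$ in the right order; a secondary technical nuisance is handling the discrete constraint $u\in C_\e(\Sigma)$ when modifying configurations near the layer, which is where the admissibility bounds $r,R$ and Lemma \ref{cellproperty} are repeatedly invoked.
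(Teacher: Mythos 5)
Your overall strategy coincides with the paper's: extract a subsequence by $\Gamma$-compactness, prove the two density bounds (the paper's Propositions \ref{limitcoercivity} and \ref{limitbound}, obtained exactly as you indicate from Lemma \ref{cellproperty}, Theorem \ref{compactemb} and a counting argument on the auxiliary lattice $r'\ZZ^n$), establish a fundamental-estimate type subadditivity so that De Giorgi--Letta yields the measure property, and then apply Theorem \ref{fonseca}; locality, symmetry and lower semicontinuity are as routine as you claim, and the local statement follows from the construction.

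There is, however, one step where your sketch as written would not close: item (a) in your subadditivity argument. The nearest-neighbour (and, more generally, finite-range) interactions straddling the gluing interface are \emph{not} negligible merely because they are bounded by $C\,\mathcal{H}^{n-1}$ of the band boundary: that surface measure is comparable to $\mathcal{H}^{n-1}(\partial A^{\prime})$, hence of order one, and an $O(1)$ error in the fundamental estimate destroys the exact subadditivity needed for the De Giorgi--Letta criterion. (Also, no cut-off functions are available for $\{\pm 1\}$-valued fields; the paper glues sharply, taking $u_{\e}$ on a discrete layer set $A_{\e,k}$ and $v_{\e}$ outside it.) The mechanism that actually makes the cross term vanish, in Proposition \ref{almostsub}, is the averaging over layers: the transition region of width $d$ is subdivided into $N_{\e}\sim d/(\e(M_{\delta}+r))$ discrete layers; on each layer the finite-range cross term is estimated by the energies of $u_{\e}$ and $v_{\e}$ on that layer plus $C_{\delta}\sum\e^{n-1}|u_{\e}-v_{\e}|$, and averaging over $k$ gives a bound of order $1/N_{\e}+\|u_{\e}-v_{\e}\|_{\lud}+o(1)\to 0$, which allows one to select a good layer $k_{\e}$; only the tail $|\xi|>M_{\delta}$ is controlled by the integrability of $J_{lr}$, which is your item (b). So the missing ingredient is that the smallness comes from the choice of a good slice via averaging, combined with the $L^1$-closeness of the two recovery sequences, and not from the area of the band boundary; with this correction your proof follows the paper's line essentially verbatim.
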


\hspace*{0,5cm}
The proof of Theorem \ref{mainthm1} will be given later. At first we prove several propositions that allow us to apply Theorem \ref{fonseca}. The next two propositions ensure that the limit energy is finite only for $u\in BV(D,\{\pm 1\})$.

\begin{proposition}\label{limitcoercivity}
Let $c_{nn}^{\e}$ and $c_{lr}^{\e}$ satisfy Hypothesis 1. If $A\in\mathcal{A}(D)$ and $u\in\lud$ are such that $F^{\prime}(u,A)<+\infty$, then $u\in BV(A,\{\pm 1\})$  and

$$F^{\prime}(u,A)\geq c\,\mathcal{H}^{n-1}(S(u)\cap A)$$

for some positive constant $c$ independent of $A$ and $u$.
\end{proposition}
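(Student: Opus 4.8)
The plan is to reduce everything to the nearest-neighbour part $F_{nn,\e}$ and to compare the discrete energy with the perimeter of the set $\{u=1\}$. Fix $A\in\mathcal{A}(D)$ and $u\in\lud$ with $F^{\prime}(u,A)<+\infty$, and pick a recovery sequence $u_{\e_n}\to u$ in $L^1(A)$ with $\sup_n F_{\e_n}(u_{\e_n},A)<+\infty$. Since only finite-energy configurations count, each $u_{\e_n}$ lies in $C_{\e_n}(\Sigma)$, hence is piecewise constant on the (scaled) Voronoi cells and takes values in $\{\pm 1\}$, so it already lies in $BV_{\loc}$. The key point is the elementary lower bound
\begin{equation*}
F_{nn,\e}(v,A)\ \geq\ \frac{1}{C}\sum_{\substack{(x,y)\in\NNS\\ \e x,\e y\in A}}\e^{n-1}|v(\e x)-v(\e y)|\ \geq\ \frac{c}{M_2}\,\mathcal{H}^{n-1}\big(S(v)\cap A'\big)
\end{equation*}
for $v\in C_\e(\Sigma)$, where $A'$ is $A$ slightly shrunk to account for cells meeting $\partial A$, and where I have used that the jump set of the Voronoi interpolation is contained in the union of the faces $\e(C(x)\cap C(y))$ with $(x,y)\in\NNS$, together with the bound $\mathcal{H}^{n-1}(C(x)\cap C(y))\le M_2$ from Lemma~\ref{cellproperty}(iii). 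Concretely, each shared $(n-1)$-face on which $v$ jumps is charged exactly once by the factor $2\e^{n-1}c_{nn}^{\e}(x,y)\ge \tfrac{2}{C}\e^{n-1}$, while $\e^{n-1}\mathcal{H}^{n-1}$ of the rescaled face is at most $M_2\e^{n-1}$; summing gives the perimeter bound with $c=2/(CM_2)$ (up to a harmless dimensional constant and the shrinking of $A$).

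Combining this with $F_{\e_n}(u_{\e_n},A)\ge F_{nn,\e_n}(u_{\e_n},A)$ (the long-range part being nonnegative) yields $\sup_n \mathcal{H}^{n-1}(S(u_{\e_n})\cap A')<+\infty$ for every $A'\Subset A$. By the compactness Theorem~\ref{compactemb} applied on $A'$, the $L^1$-limit $u$ belongs to $BV(A',\{\pm 1\})$, and letting $A'\uparrow A$ through an exhaustion gives $u\in BV_{\loc}(A,\{\pm1\})$ with $\mathcal{H}^{n-1}(S(u)\cap A')$ uniformly bounded, hence $u\in BV(A,\{\pm1\})$. For the quantitative estimate I would use lower semicontinuity of the perimeter restricted to a fixed open set: for each $A'\Subset A$,
\begin{equation*}
\mathcal{H}^{n-1}(S(u)\cap A')\ \leq\ \liminf_{n\to\infty}\mathcal{H}^{n-1}(S(u_{\e_n})\cap A')\ \leq\ \frac{1}{c}\,\liminf_{n\to\infty} F_{\e_n}(u_{\e_n},A),
\end{equation*}
and then pass to the supremum over $A'\Subset A$ on the left. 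Taking the infimum over all admissible recovery sequences gives $\mathcal{H}^{n-1}(S(u)\cap A)\le \tfrac1c F^{\prime}(u,A)$, with $c$ depending only on $C$ and on the structural constants $r,R$ (through $M_2$), and in particular independent of $A$ and $u$.

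The main technical nuisance — not a deep obstacle — is the careful bookkeeping near $\partial A$: the constraint $\e x,\e y\in A$ in $F_{nn,\e}(\cdot,A)$ means that Voronoi faces straddling $\partial A$ are not counted, so one cannot directly lower-bound by the perimeter of $u_{\e_n}$ in all of $A$. This is handled exactly as above by fixing $A'\Subset A$: for $\e$ small, the $\e$-rescaled cells containing a point $\e x$ with $x\in\Sigma$, $\e x\in A'$ are entirely contained in $A$ together with their neighbours (using $C(x)\subset B_R(x)$ from Lemma~\ref{cellproperty}(i) and $\e R\to 0$), so every jump face of $u_{\e_n}$ inside $A'$ is genuinely accounted for in $F_{nn,\e_n}(u_{\e_n},A)$. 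One should also note that a priori the recovery sequence need not consist of $\{\pm1\}$-valued maps on all of $A$ but only where it has finite energy — however finiteness of $F_{\e_n}(u_{\e_n},A)$ forces $u_{\e_n}\in C_{\e_n}(\Sigma)$, so this is automatic. No use of the long-range interactions is needed for the lower bound; they only reappear (as an upper bound via Hypothesis~1) in the companion proposition.
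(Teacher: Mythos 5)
Your argument is correct and follows essentially the same route as the paper's proof: you drop the nonnegative long-range part, bound $\mathcal{H}^{n-1}(S(u_{\e})\cap A')$ for $A'\subset\subset A$ by the nearest-neighbour energy in $A$ using Lemma \ref{cellproperty} (the $M_2$ bound on the Voronoi faces, with the slight enlargement of $A'$ by $\e R$ to catch faces whose centers lie just outside), then invoke Theorem \ref{compactemb} and lower semicontinuity of the perimeter, and finally let $A'$ invade $A$. The paper does exactly this with $A'=A_\eta$ and $(A_\eta)^{\e R}$, so no further comment is needed.
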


\begin{proof}
Let $C_{\e}(\Sigma)\ni u_{\e}\to u$ in $\lud$ be such that $\liminf_{\e\to 0}F_{\e}(u_{\e},A)<+\infty$. Given $\eta>0$, we set $A_{\eta}=\{x\in A:\;\dist(x,\partial A)>\eta\}$. Note that

\begin{equation*}
S(u_{\e})\cap A_{\eta}\subset \bigcup_{\substack{(x,y)\in\NNS \\ \e x,\e y\in (A_{\eta})^{\e R}\\
u_{\e}(\e x)\neq u_{\e}(\e y)}}\e \left(C(x)\cap C(y)\right),
\end{equation*}

so that, by Lemma \ref{cellproperty},

\begin{equation}\label{jumpsetchara}
\mathcal{H}^{n-1}(S(u_{\e})\cap A_{\eta})\leq M_2\sum_{\substack{(\e x,\e y)\in\NNS \\ \e x,\e y\in (A_{\eta})^{\e R}}}\e^{n-1}|u_{\e}(\e x)-u_{\e}(\e y)|.
\end{equation}

By the positivity of $c_{lr}$, Hypothesis 1 and \eqref{jumpsetchara}, for $\e$ small enough we have 

$$
F_{\e}(u_{\e},A) \geq C\sum_{\substack{(x,y)\in\NNS \\ \e x,\e y\in (A_{\eta})^{\e R}}}\e^{n-1}|u_{\e}(\e x)-u_{\e}(\e y)|\geq C\,\mathcal{H}^{n-1}(S(u_{\e})\cap A_{\eta}),
$$

where we have used (\ref{jumpsetchara}). Theorem \ref{compactemb} now implies that $u\in BV_{loc}(A,\{\pm 1\})$ and, since the bound on the measure of the jump set is uniform in $\eta$, we have $u\in BV(A,\{\pm 1\})$. For $\e\to 0$ we get $F^{\prime}(u,A)\geq C\,\mathcal{H}^{n-1}(S(u)\cap A_{\eta})$ by lower semicontinuity. Letting $\eta\to 0$ yields the claim. 

\end{proof}

Before we prove the next upper bound inequality we need to introduce a special class of sets and $BV$-functions.

\begin{definition}\label{polyhedral}
Let $U$ be an open set. A $n$-dimensional polyhedral set in $\rn$ is a set $E\in\mathcal{A}^R(\rn)$ such that its boundary is contained in the union of finitely many affine hyperplanes. A function $u\in BV(U,\{\pm 1\})$ is called a polyhedral function if there exists a $n$-dimensional polyhedral set $E$ in $\rn$ such that $\mathcal{H}^{n-1}(\partial E\cap\partial U)=0$ and $u(x)=1$ if $x\in U\cap E$ and $u(x)=-1$ for every $x\in U\backslash E$.
\end{definition}

\begin{proposition}\label{limitbound}
Let $c_{nn}^{\e}$ and $c_{lr}^{\e}$ satisfy Hypothesis 1. Then there exists a constant $C>0$ such that for all $u\in BV(D,\{\pm 1\})$ and $A\in\Ard$,

$$F^{\prime\prime}(u,A)\leq C\,\mathcal{H}^{n-1}(S(u)\cap A).$$
\end{proposition}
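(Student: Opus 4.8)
\emph{Strategy.} The plan is to prove the estimate first for polyhedral targets (Definition \ref{polyhedral}) and then to remove this restriction by lower semicontinuity. Being a $\Gamma\hbox{-}\limsup$, $F''(\cdot,A)$ is $L^1(D)$-lower semicontinuous, and every $u=2\mathbf 1_E-1\in BV(D,\{\pm1\})$ is the $L^1(D)$-limit of polyhedral functions $u_j=2\mathbf 1_{E_j}-1$ with $\mathcal H^{n-1}(S(u_j)\cap A)\to\mathcal H^{n-1}(S(u)\cap A)$; applying a generic small translation to the defining polyhedra I may moreover assume $\mathcal H^{n-1}(\partial E_j\cap\partial A)=0$ (for a.e.\ translation this holds, since for each hyperplane direction the set of heights meeting the Lipschitz surface $\partial A$ in positive $\mathcal H^{n-1}$-measure is Lebesgue-negligible, by the area formula applied to a linear functional on $\partial A$). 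So it suffices to produce, for each polyhedral $u=2\mathbf 1_E-1$ with $\mathcal H^{n-1}(\partial E\cap\partial A)=0$, a sequence $u_\e\in C_\e(\Sigma)$ with $u_\e\to u$ in $L^1(D)$ and $\limsup_{\e\to0}F_\e(u_\e,A)\le C\,\mathcal H^{n-1}(S(u)\cap A)$, with $C$ depending only on $n$, on $r,R$ from Definition \ref{defadmissible}, and on the data of Hypothesis 1.

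\emph{The recovery sequence and the nearest-neighbour part.} For polyhedral $u=2\mathbf 1_E-1$ I take $u_\e(\e x):=1$ if $\e x\in E$ and $u_\e(\e x):=-1$ otherwise, $x\in\Sigma$, interpolated into $C_\e(\Sigma)$. By Lemma \ref{cellproperty}(i) every cell $\e C(x)$ has diameter at most $2\e R$, so $u_\e\ne u$ only on cells meeting $\partial E$, a set of measure $O(\e)$, whence $u_\e\to u$ in $L^1(D)$. If $(x,y)\in\NNS$, $\e x,\e y\in A$ and $u_\e(\e x)\ne u_\e(\e y)$, then $|x-y|\le2R$ by Lemma \ref{cellproperty}(i) and the segment $[\e x,\e y]$ meets $\partial E$, so $\e x$ lies within $2\e R$ of $\partial E$; since $\e\Sigma$ is $\e r$-separated, at most $C\e^{-n}|(\partial E)^{C\e}\cap A^{C\e}|$ such points $x$ occur, each paired (Lemma \ref{cellproperty}(ii)) with at most $M_1$ points $y$. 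Using $c_{nn}^\e\le C$, $|u_\e(\e x)-u_\e(\e y)|\le2$, and the Minkowski-content bound $|(\partial E)^{C\e}\cap A^{C\e}|\le C\e\,\mathcal H^{n-1}(\partial E\cap A^{C\e})+o(\e)$ for the polyhedral set $E$, this gives $F_{nn,\e}(u_\e,A)\le C\,\mathcal H^{n-1}(\partial E\cap A^{C\e})+o(1)$; letting $\e\to0$, so that $A^{C\e}\downarrow\overline A$, and using $\mathcal H^{n-1}(\partial E\cap\partial A)=0$, I obtain $\limsup_\e F_{nn,\e}(u_\e,A)\le C\,\mathcal H^{n-1}(S(u)\cap A)$.

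\emph{The long-range part.} Rewriting $F_{lr,\e}(u_\e,A)=\sum_{\xi\in\rzn}\sum_{\alpha\in R^\xi_{lr,\e}(A)}\e^{n-1}c_{lr}^\e(x_\alpha,x_{\alpha+\xi})|u_\e(\e x_\alpha)-u_\e(\e x_{\alpha+\xi})|$, I use $|x_\alpha-x_{\alpha+\xi}|\ge|\xi|-2r$, hence $c_{lr}^\e(x_\alpha,x_{\alpha+\xi})\le J_{lr}(\max\{|\xi|-2r,0\})$ by Hypothesis 1, while $u_\e(\e x_\alpha)\ne u_\e(\e x_{\alpha+\xi})$ forces the segment $[\e x_\alpha,\e x_{\alpha+\xi}]$, of length $\le\e(|\xi|+2r)$, to meet $\partial E$. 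Combining the $\e r$-separation of $\e\Sigma$ with the elementary inequality $|\{v\in\R^n:[v,v+h]\cap\Gamma\ne\emptyset\}|\le|h|\,\mathcal H^{n-1}(\Gamma)$ for $(n-1)$-rectifiable $\Gamma$ — applied with $\Gamma=\partial E\cap A$, and, for segments that leave $A$ before meeting $\partial E$, with $\Gamma=\partial A$ — I bound, for each $\xi$,
\[
\e^{n-1}\#\{\alpha\in R^\xi_{lr,\e}(A):u_\e(\e x_\alpha)\ne u_\e(\e x_{\alpha+\xi})\}\le C(1+|\xi|)\,\mathcal H^{n-1}\!\big(\partial E\cap A^{C\e(1+|\xi|)}\big)+(\text{lower-order and boundary terms}).
\]
Multiplying by $J_{lr}(\max\{|\xi|-2r,0\})$ and summing, I invoke
\[
\sum_{\xi\in\rzn}J_{lr}(\max\{|\xi|-2r,0\})(1+|\xi|)<+\infty,
\]
which is a reformulation of $\int_{\R^n}J_{lr}(|x|)|x|\dx<+\infty$: the finitely many terms with $|\xi|\le M$ are passed to the limit termwise, the remainder is dominated by the tail of this convergent series (with the truncated functionals $F^M_{lr,\e}$ of \eqref{truncenergy} isolating that remainder), and the $\partial A$-contributions vanish, either as $\e\to0$ — using $\mathcal H^{n-1}(\partial E\cap\partial A)=0$, so that the relevant points crowd near the negligible set $\partial E\cap\partial A$ — or as $M\to\infty$. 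I conclude $\limsup_\e F_{lr,\e}(u_\e,A)\le C\,\mathcal H^{n-1}(S(u)\cap A)$, which together with the previous paragraph proves the bound for polyhedral $u$, hence in general.

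\emph{Main obstacle.} The reduction to polyhedral functions and the nearest-neighbour estimate are routine; the delicate point is the uniform-in-$\e$ control of the long-range sum, because the interactions act on the mesoscopic scale $\e|\xi|$, which is not a priori small. A naive per-$\xi$ Minkowski estimate produces error terms that grow with $|\xi|$, so the $\xi$-sum must be split into several ranges (roughly $|\xi|\le M$, an intermediate range, and a far range $|\xi|\gtrsim\e^{-\theta}$, in which one additionally uses that both interacting points lie in the bounded set $A$), and the decay of $J_{lr}$ forced by $\int_{\R^n}J_{lr}(|x|)|x|\dx<+\infty$ — which in particular yields $J_{lr}(t)=o(t^{-(n+1)})$ — must be balanced against these errors and against the truncation parameter $M$. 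Controlling the segments that exit $A$ before crossing $\partial E$, where the hypothesis $\mathcal H^{n-1}(\partial E\cap\partial A)=0$ enters, is the other technical nuisance.
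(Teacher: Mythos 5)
Your proof is, in substance, the paper's proof: the same two-step structure (first polyhedral data, treated with the trivial discretization, a per-$\xi$ Minkowski/sweep-type count of the interfacial lattice points, and a splitting of the $\xi$-sum at a threshold $M_\delta$ dictated by the summability of $J_{lr}(|\hat{\xi}|)(|\xi|+2R)$; then the general case by density of polyhedral functions plus $L^1$-lower semicontinuity of the $\Gamma$-$\limsup$), and your estimates correspond to \eqref{minkowskicont} and \eqref{minkowskibound}. Incidentally, the three-range splitting you anticipate in your ``main obstacle'' paragraph is unnecessary: since the polyhedral set $E$ is bounded, the crude bound $\e^{n-1}\#\{\alpha\in R^{\xi}_{lr,\e}(A):u_\e(\e x_\alpha)\neq u_\e(\e x_{\alpha+\xi})\}\leq C(E)\,(|\xi|+2R)$ holds for \emph{every} $\xi$ uniformly in $\e$, so the whole tail $|\xi|>M_\delta$ is dominated by $C(E)\,\delta$, which is exactly how the paper disposes of it with only two ranges.

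The one step you should tighten is the reduction to polyhedral data. You assert that every $u\in BV(D,\{\pm 1\})$ is an $L^1(D)$-limit of \emph{globally} polyhedral functions $u_j$ with $\mathcal{H}^{n-1}(S(u_j)\cap A)\to\mathcal{H}^{n-1}(S(u)\cap A)$; the generic-translation remark only arranges $\mathcal{H}^{n-1}(\partial E_j\cap\partial A)=0$ and does not address the actual difficulty, which arises when $u$ itself charges $\partial A\cap D$: strict approximation of $u$ in $D$ only yields $\limsup_j\mathcal{H}^{n-1}(S(u_j)\cap A)\leq\mathcal{H}^{n-1}(S(u)\cap\overline{A}\cap D)$, and producing global polyhedral approximants that push that boundary mass \emph{outside} the merely Lipschitz set $A$ is a nontrivial extra construction that you do not supply. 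The paper avoids this issue: it approximates only $u_{|A}$ by polyhedral functions $u_n$ \emph{inside} $A$ (the standard density statement in the open Lipschitz set $A$, which does give $\mathcal{H}^{n-1}(S(u_n)\cap A)\to\mathcal{H}^{n-1}(S(u)\cap A)$), glues them with $u$ outside $A$ to get $\tilde u_n\to u$ in $L^1(D)$, and applies the polyhedral-case estimate to $\tilde u_n$ — which is legitimate because that estimate only requires $u_{|A}$ polyhedral: the recovery sequence there is defined as the discretization of $u$ on $A$ and $+1$ elsewhere, and by Remark \ref{localgamma} only its $L^1(A)$-convergence matters. Rephrasing your reduction along these lines closes the gap without changing anything else in your argument.
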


\begin{proof}
We consider only the long-range term. The same technique can be used to bound the nearest neighbour interactions. For the time being we assume that $u_{|A}\in BV(A,\{\pm 1\})$ is a polyhedral function with corresponding set $E$. Note that $S(u)\cap A=\partial E\cap A$. We define $u_{\e}\in C_{\e}(\Sigma)$ by its values on $\e\Sigma$ via 

\begin{equation*}
u_{\e}(\e x):=
\begin{cases}
u(\e x) &\mbox{if $\e x\in A$,}\\
+1 &\mbox{otherwise.}
\end{cases}
\end{equation*}

Then we have $u_{\e}\to u$ in $L^1(A)$. By Hypothesis 1, given $\delta>0$, there exists $M_{\delta}>0$ such that $\sum_{|\xi|>M_{\delta}}J_{lr}(|\hat{\xi}|)(|\xi|+2R)\leq \delta$, where $\hat{\xi}\in \xi+[-r^{\prime},r^{\prime}]^n$ is such that $|\hat{\xi}|=\dist([0,r^{\prime})^n,[0,r^{\prime})^n+\xi)$. Now for all $\xi\in\rzn$ such that $|\xi|\leq M_{\delta}$ and $\eta>0$ there exists $\e_0=\e_0(\delta,\eta)$ such that, for all $\e\leq \e_0$, the couple $(\e x_{\a},\e x_{\a+\xi})$ gives a positive contribution to the energy only if $\dist(\e x_{\a},\partial E\cap A^{\eta})\leq \e(|\xi|+r)$. Using Lemma \ref{cellproperty} we deduce that

\begin{equation*}
\e^{n-1}\#\{\e x_{\a}:\;u(\e x_{\a})\neq u(\e x_{\alpha+\xi}),\,\e x_{\a},\e x_{\a+\xi}\in A\}
\leq \frac{C}{\e}|(\partial E\cap \overline{A^{\eta}})^{\e(|\xi|+2R)}|.
\end{equation*} 
 
Observe that the set $\partial E\cap\overline{A^{\eta}}$ is regular enough to ensure that, at least for small $\e$,

\begin{equation}\label{minkowskicont}
\frac{1}{\e}|(\partial E\cap \overline{A^{\eta}})^{\e(|\xi|+2R)}|\leq C\, \mathcal{H}^{n-1}(\partial E\cap \overline{A^{\eta}})(|\xi|+2R).
\end{equation}

Next we consider the interactions where $|\xi|>M_{\delta}$. For $E$ being a polyhedral set we have

\begin{align}
\#\{\e x_{\a}:\;u(\e x_{\a})\neq u(\e x_{\alpha+\xi}),\,\e x_{\a},\e x_{\a+\xi}\in A\}
\leq \frac{C}{\e^{n-1}}\mathcal{H}^{n-1}(\partial E) (|\xi|+2R).\label{minkowskibound}
\end{align}

Using Remark \ref{localgamma}, (\ref{minkowskicont}), (\ref{minkowskibound}) and the definition of $M_{\delta}$ we conclude that

\begin{align*}
F^{\prime\prime}(u,A)\leq C\,\mathcal{H}^{n-1}(\partial E\cap \overline{A^{\eta}})\sum_{|\xi|\leq M_{\delta}}J_{lr}(|\hat{\xi}|)(|\xi|+2R)+C(E)\, \delta.
\end{align*}

Using the integrability assumption from Hypothesis 1 we infer from the arbitrariness of $\delta$ and $\eta$ that

\begin{equation*}
F^{\prime\prime}(\w)(u,A)\leq C\,\mathcal{H}^{n-1}(S(u)\cap A),
\end{equation*}

where we have used that $\mathcal{H}^{n-1}(\partial E\cap\partial A)=0$.
\\
\hspace*{0,5cm}
For a general function $u\in BV(D,\{\pm 1\})$ let us consider $u_{|A}\in BV(A,\{\pm 1\})$. By the properties of $BV$ functions on Lipschitz domains there exists a sequence of polyhedral functions $u_n\in BV(A,\{\pm 1\})$ such that $u_n\to u_{|A}$ strongly in $L^1(A)$ and $\mathcal{H}^{n-1}(S(u_n)\cap A)\to\mathcal{H}^{n-1}(S(u)\cap A)$. Define $\tilde{u}_n\in\lud$ by

\begin{equation*}
\tilde{u}_n(x)=
\begin{cases}
u_n(x) &\mbox{if $x\in A$,} \\
u(x) &\mbox{otherwise.}
\end{cases}
\end{equation*}

Since $A\in\Ard$ we have that $\tilde{u}_n\in BV(D,\{\pm 1\})$. Moreover $\tilde{u}_n\to u$ strongly in $\lud$ and $\tilde{u}_n$ satisfies the assumptions of the first part of the proof. By the lower semicontinuity of the $\Gamma$-$\limsup$ it holds that

\begin{equation*}
F^{\prime\prime}(u,A)\leq C\liminf_n\mathcal{H}^{n-1}(S(\tilde{u}_n)\cap A)=C\,\mathcal{H}^{n-1}(S(u)\cap A).
\end{equation*}

\end{proof}

In the following proposition we state a weak subadditivity statement for $F''(u,\cdot)$.
\begin{proposition}\label{almostsub}
Let $c_{nn}^{\e}$ and $c_{lr}^{\e}$ satisfy Hypothesis 1. Then, for every $A,B\in\mathcal{A}(D)$, every $A^{\prime}\subset\Ard$ such that $A^{\prime}\subset\subset A$ and every $u\in BV(D,\{\pm 1\})$,

\begin{equation*}
F^{\prime\prime}(u,A^{\prime}\cup B)\leq F^{\prime\prime}(u,A)+F^{\prime\prime}(u,B).
\end{equation*} 
\end{proposition}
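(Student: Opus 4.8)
The plan is to prove a fundamental estimate for $\{F_\e\}$ and then average over a large number of interface positions. We may assume $F^{\prime\prime}(u,A),F^{\prime\prime}(u,B)<+\infty$, and we fix $u_\e,v_\e\in C_\e(\Sigma)$ with $u_\e,v_\e\to u$ in $L^1(D)$, $\limsup_\e F_\e(u_\e,A)=F^{\prime\prime}(u,A)$ and $\limsup_\e F_\e(v_\e,B)=F^{\prime\prime}(u,B)$. Set $d:=\dist(\overline{A'},\R^n\setminus A)>0$. For each small $\e$ put $N=N_\e:=\lceil1/\e\rceil$; since $x\mapsto\dist(x,A')$ is $1$-Lipschitz, the coarea formula gives $\int_0^d\mathcal{H}^{n-1}(\{\dist(\cdot,A')=t\})\,\mathrm dt\le|A|$ and $\mathcal{H}^{n-1}(\{\dist(\cdot,A')=t\})<+\infty$ for a.e.\ $t$, so we may choose levels $d/4<c_1<\dots<c_N<3d/4$ with mutual gaps $\ge d/(8N)$ and with $\mathcal{H}^{n-1}(\{\dist(\cdot,A')=c_i\})<+\infty$ for every $i$. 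Put $A_i:=\{\dist(\cdot,A')<c_i\}$, so $A'\subset A_1\subset\cdots\subset A_N\subset A$ and $\dist(\partial A_i,\partial A'\cup\partial A)\ge d/4$, and define $w^i_\e\in C_\e(\Sigma)$ by $w^i_\e(\e x):=u_\e(\e x)$ if $\e x\in A_i$ and $w^i_\e(\e x):=v_\e(\e x)$ otherwise; then $w^i_\e\to u$ in $L^1(D)$, uniformly in $i$.

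Splitting each of the two sums defining $F_\e(w^i_\e,A'\cup B)$ according to whether the two interacting points lie both in $A_i$, both outside $A_i$, or straddle $\partial A_i$, and using $A'\subset A_i\subset A$ --- so that the ``both-in'' pairs are among those of $F_\e(u_\e,A)$ and the ``both-out'' pairs, being contained in $(A'\cup B)\setminus A_i=B\setminus A_i\subset B$, are among those of $F_\e(v_\e,B)$ --- one obtains
\[
F_\e(w^i_\e,A'\cup B)\le F_\e(u_\e,A)+F_\e(v_\e,B)+G^i_\e ,
\]
where $G^i_\e$ collects the straddling pairs. Since $\min_i F_\e(w^i_\e,A'\cup B)\le F_\e(u_\e,A)+F_\e(v_\e,B)+\frac1N\sum_i G^i_\e$, it suffices to show $\limsup_\e\frac1N\sum_{i=1}^N G^i_\e=0$. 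For a straddling pair of increment $\xi$, with inner endpoint $\e x_\a\in A_i$ and outer endpoint $\e x_{\a+\xi}\notin A_i$, I would write $|u_\e(\e x_\a)-v_\e(\e x_{\a+\xi})|\le|u_\e(\e x_\a)-u_\e(\e x_{\a+\xi})|+|u_\e(\e x_{\a+\xi})-v_\e(\e x_{\a+\xi})|$, so that $G^i_\e\le H^i_\e+K^i_\e$ with $H^i_\e$ the first group of terms and $K^i_\e$ the second.

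Both endpoints of a straddling pair of increment $\xi$ lie within $C\e(|\xi|+R)$ of $\partial A_i$, so by Lemma \ref{cellproperty} the number of such pairs at $\partial A_i$ is at most $C\e^{-n}\int_{c_i-C\e(|\xi|+R)}^{c_i+C\e(|\xi|+R)}\mathcal{H}^{n-1}(\{\dist(\cdot,A')=t\})\,\mathrm dt$, and since the $c_i$ are $\gtrsim\e$-separated each level $t$ lies in at most $C(|\xi|+1)$ of these windows; hence $\sum_i\#\{\text{straddling pairs of increment }\xi\}\le C\e^{-n}(|\xi|+1)|A|$. Let $M_0$ be a fixed bound (depending only on $r,R$, via Lemma \ref{cellproperty}) for all nearest-neighbour increments, fix $\delta>0$ and, using Hypothesis 1, $M_\delta\ge M_0$ with $\sum_{|\xi|>M_\delta}J_{lr}(|\hat\xi|)(|\xi|+1)\le\delta$. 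Then: the part of $\sum_i H^i_\e$ with $|\xi|>M_\delta$ is purely long-range ($c^\e\le J_{lr}(|\hat\xi|)$, spin difference $\le2$), hence $\le C\e^{-1}|A|\sum_{|\xi|>M_\delta}J_{lr}(|\hat\xi|)(|\xi|+1)\le C\e^{-1}|A|\delta$; the part with $|\xi|\le M_\delta$ involves only pairs lying in $A$ (as $\dist(\partial A_i,\partial A)\ge d/4>C\e(M_\delta+R)$ for $\e$ small), each of which is counted at most $\mu_\delta$ times for a $\delta$-dependent constant $\mu_\delta$, hence this part is $\le\mu_\delta F_\e(u_\e,A)$; and $\sum_i K^i_\e\le C\e^{-1}\|u_\e-v_\e\|_{L^1(D^{\e R})}$, because each Voronoi cell has volume $\ge c\e^n$ (Lemma \ref{cellproperty}), each level is counted $\lesssim(|\xi|+1)$ times, and (bounding $c^\e\le C$ on nearest-neighbour pairs, $c^\e\le J_{lr}(|\hat\xi|)$ otherwise) the constant $C\sum_{|\xi|\le M_0}(|\xi|+1)+\sum_\xi J_{lr}(|\hat\xi|)(|\xi|+1)$ is finite by Hypothesis 1. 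Dividing by $N=\lceil1/\e\rceil$ and letting $\e\to0$: the first term is $\le C|A|\delta$, the second vanishes (since $N\to\infty$ while $F_\e(u_\e,A)$ stays bounded), and the third vanishes (since $N\e\ge1$, $\|u_\e-v_\e\|_{L^1(D)}\to0$ and $|D^{\e R}\setminus D|\to0$, using $|u_\e-v_\e|\le2$). Thus $\limsup_\e\frac1N\sum_i G^i_\e\le C|A|\delta$; choosing $i_\e$ attaining the minimum above, the definition of the $\Gamma$-$\limsup$ applied to $w^{i_\e}_\e\to u$ in $L^1(D)$ gives $F^{\prime\prime}(u,A'\cup B)\le F^{\prime\prime}(u,A)+F^{\prime\prime}(u,B)+C|A|\delta$, and $\delta\to0$ concludes.

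The genuinely delicate point is the ``defect'' term $K^i_\e$: the surface-scaled number of interactions across $\partial A_i$ along which $u_\e$ and $v_\e$ disagree is of order $\e^{-1}\|u_\e-v_\e\|_{L^1(D)}$, which need not be infinitesimal; this is exactly why the number of cutting layers must grow like $1/\e$ rather than be kept fixed --- only then does the per-layer defect become $O(\|u_\e-v_\e\|_{L^1(D)})$. Keeping at the same time a uniform grip on the long-range tail, whose per-layer cost grows linearly in the interaction range, is what forces the coarea selection of the levels $c_i$ together with the multiplicity bookkeeping above; the remaining steps are routine.
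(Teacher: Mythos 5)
Your proof is correct, and at its core it runs on the same mechanism as the paper's: glue $u_\e$ and $v_\e$ along $\sim 1/\e$ nested interfaces between $A'$ and $A$, write a fundamental estimate, truncate the long-range interactions at a scale $M_\delta$ with error $\delta$, and average over the interface index so that both the re-counted energy terms and the defect term of total size $\e^{-1}\|u_\e-v_\e\|_{L^1(D)}$ are killed; the paper then also selects a good index exactly as you do. Where you genuinely deviate is in the bookkeeping of the straddling pairs: the paper cuts along the neighborhoods $A_{\e,k}=\{x\in A:\dist(x,A')<k\e(M_\delta+r)\}$ and bounds the long-range straddling count \emph{per layer} by $C\e^{1-n}\mathcal{H}^{n-1}(\partial A')(|\xi|+2R)$, which is precisely where $A'\in\Ard$ and Theorem \ref{lipapprox} are used, while the $|\xi|\le M_\delta$ part is re-absorbed into $F_\e(u_\e,\cdot)+F_\e(v_\e,\cdot)$ on strips of overlap multiplicity $2$; you instead pick the cutting levels of the $1$-Lipschitz distance function via the coarea formula and count straddling pairs volumetrically, summed over all layers with overlap multiplicity $C(|\xi|+1)$, getting $C\e^{-n}(|\xi|+1)|A|$ in total, and you re-absorb the truncated part with multiplicity $\mu_\delta$ instead of $2$ (harmless, since you divide by $N\to\infty$). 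The payoff of your version is that the Lipschitz regularity of $A'$ and Theorem \ref{lipapprox} are never used, so your argument proves the proposition for an arbitrary open $A'\subset\subset A$; the paper's per-layer surface bound is uniform in the layer, which makes the tail estimate slightly more direct but requires the regular inner set. One cosmetic point: for $|\xi|\gtrsim d/\e$ the window $[c_i-C\e(|\xi|+R),\,c_i+C\e(|\xi|+R)]$ spills out of $(0,d)$ and the window-integral representation of the slab volume as you wrote it is no longer literally valid (it would see the level $t=0$, i.e.\ $\overline{A'}$ itself); the fix is immediate — count only the inner endpoints, which always lie in $\{c_i-\e(|\xi|+r)<\dist(\cdot,A')<c_i\}\subset A$, or simply use that all relevant points lie in the bounded set $D$ — and the claimed bound, hence the proof, is unaffected.
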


\begin{proof}
We only take into account the long-range term, our argument working the same also for short-range interactions. Without loss of generality let $F^{\prime\prime}(u,A)$ and $F^{\prime\prime}(u,B)$ be finite. Let $u_{\e},v_{\e}\in C_{\e}(\Sigma)$ both converge to $u$ in $\lud$ such that

\begin{equation*}
\limsup_{\e\to 0}F_{\e}(u_{\e},A)=F^{\prime\prime}(u,A),\quad\limsup_{\e\to 0}F_{\e}(v_{\e},B)=F^{\prime\prime}(u,B).
\end{equation*}

By Hypothesis 1, given $\delta>0$, there exists $M_{\delta}>0$ such that $\sum_{|\xi|>M_{\delta}}J_{lr}(|\hat{\xi}|)(|\xi|+2R)\leq \delta$. Fix $d\leq\dist(A^{\prime},A^c)$ and let $N_{\e}:=[\frac{d}{\e (M_{\delta}+r)}]$, where $[\cdot]$ denotes the integer part. For $k\in\mathbb{N}$ we define

\begin{equation*}
A_{\e,k}:=\{x\in A:\;\dist(x,A^{\prime})<k\e(M_{\delta}+r)\}
\end{equation*}

and $w^k_{\e}\in C_{\e}(\Sigma)$ by

\begin{equation*}
w^k_{\e}(\e x)=\mathds{1}_{A_{\e,k}}(\e x)u_{\e}(\e x)+(1-\mathds{1}_{A_{\e,k}}(\e x))v_{\e}(\e x).
\end{equation*}

Note that for each fixed $k\in\NN$, $w^k_{\e}\to u$ in $\lud$. Now we set

\begin{equation*}
S_k^{\xi,\e}:=\{x=y+t\,\xi^{\prime}:\;y\in\partial A_{\e,k},\,|t|\leq\e,\xi^{\prime}\in\xi+[-r^{\prime},r^{\prime}]^n\}\cap (A^{\prime}\cup B).
\end{equation*}

For $k\leq N_{\e}$ it can easily be verified that 
 
\begin{align}\label{almostsubeq}
F_{\e}(w^k_{\e},A^{\prime}\cup B)&\leq\, F_{\e}(u_{\e},A_{\e,k})+F_{\e}(v_{\e},A_{\e,k}^c\cap B)
\\
+\sum_{\xi\in\rzn}&\sum_{\a\in R^{\xi}_{lr,\e}(S_k^{\xi,\e})}\underbrace{\e^{n-1}c_{lr}^{\e}(x_{\alpha},x_{\a+\xi})|w^k_{\e}(\e x_{\alpha})-w^k_{\e}(\e x_{\a+\xi})|}_{=:\rho_k^{\xi,\e}(\a)}\nonumber
\\
\leq F_{\e}(\w)(u_{\e},A)+&F_{\e}(\w)(v_{\e},B)
+\sum_{\xi\in\rzn}\sum_{\a\in R^{\xi}_{lr,\e}(S_k^{\xi,\e})}\rho_k^{\xi,\e}(\a).\nonumber
\end{align}

Now if $d$ is small enough (depending only on $A^{\prime}$), by Theorem \ref{lipapprox} we have

\begin{equation*}
\mathcal{H}^{n-1}(\partial A_{\e,k})\leq C\mathcal{H}^{n-1}(\partial A^{\prime})
\end{equation*}

for all $k\leq N_{\e}$. We deduce that

\begin{equation*}
\# (S_k^{\xi,\e}\cap \e\Sigma) \leq C \e^{1-n}\mathcal{H}^{n-1}(\partial A^{\prime})(|\xi|+2R)
\end{equation*}

and hence

\begin{equation}\label{longrange}
\sum_{|\xi|>M_{\delta}}\sum_{\a\in R^{\xi}_{lr,\e}(S_k^{\xi,\e})}\rho_k^{\xi,\e}(\a)\leq C\mathcal{H}^{n-1}(\partial A^{\prime}) \sum_{|\xi|>M_{\delta}}J_{lr}(|\hat{\xi})(|\xi|+2R)\leq C\delta.
\end{equation}

Now we treat the interactions when $|\xi|\leq M_{\delta}$. By our construction we have $S_k^{\e,\xi}\subset (A_{\e,k+1}\backslash A_{\e,k-1})\cap B =: S_k^{\e}$. Observe that every point can only lie in two sets $S_{k_1}^{\e},S_{k_2}^{\e}$. Furthermore a straightforward calculation shows that, for all $x,y\in\e\Sigma$,

\begin{equation*}
|w^k_{\e}(x)-w^k_{\e}(y)|\leq |u_{\e}(x)-u_{\e}(y)|+|v_{\e}(x)-v_{\e}(y)|+2|u_{\e}(y)-v_{\e}(y)|.
\end{equation*}

We deduce that

\begin{align*}
\sum_{|\xi|\leq M_{\delta}}\sum_{\a\in R^{\xi}_{lr,\e}(S_k^{\xi,\e})}\rho_k^{\xi,\e}(\a)
\leq &F_{\e}(u_{\e},S_k^{\e})+F_{\e}(v_{\e},S_k^{\e})
\\
&+C_{\delta}\sum_{\substack{x\in\Sigma \\ \e x\in S_k^{\e}}}\e^{n-1}|u_{\e}(\e x)-v_{\e}(\e x)|,
\end{align*}
where $C_{\delta}$ depends only on $M_{\delta}$. Now averaging the last inequality yields

\begin{align*}
I_{\e}&:=\frac{1}{N_{\e}-1}\sum_{k=1}^{N_{\e}-1}\sum_{|\xi|\leq M_{\delta}}\sum_{\a\in R^{\xi}_{lr,\e}(S_k^{\xi,\e})}\rho_k^{\xi,\e}(\a)
\\
&\leq\frac{2}{N_{\e}-1}\left(F_{\e}(\w)(u_{\e},A)+F_{\e}(\w)(v_{\e},B)\right)+2\,C_{\delta}^{\prime}\sum_{\substack{x\in\Sigma \\
\e x\in D}}\e^{n}|u_{\e}(\e x)-v_{\e}(\e x)|
\\
&\leq\frac{C}{N_{\e}}+C_{\delta}^{\prime}\left(\|u_{\e}-v_{\e}\|_{\lud}+{\scriptstyle \mathcal{O}}(1)\right),
\end{align*}

so that $I_{\e}\to 0$ when $\e\to 0$. For every $\e>0$ let $k_{\e}\in\{1,\dots,N_{\e}-1\}$ be such that

\begin{equation}\label{boundrho}
\sum_{|\xi|\leq M_{\delta}}\sum_{\a\in R^{\xi}_{lr,\e}(S_{k_{\e}}^{\xi,\e})}\rho_{k_{\e}}^{\xi,\e}(\a)\leq I_{\e}
\end{equation}
and set $w_{\e}:=w_{\e}^{k_{\e}}$. Note that $w_{\e}$ still converge to $u$ strongly in $\lud$. Hence, using (\ref{almostsubeq}), (\ref{longrange}) and \eqref{boundrho}, we conclude that

\begin{equation*}
F^{\prime\prime}(u,A^{\prime}\cup B)\leq\limsup_{\e\to 0}F_{\e}(w_{\e},A^{\prime}\cup B)\leq F^{\prime\prime}(u,A)+F^{\prime\prime}(u,B)+C\,\delta.
\end{equation*}

The arbitrariness of $\delta$ proves the claim.
\end{proof}

\begin{proof}[Proof of Theorem \ref{mainthm1}] From Propositions \ref{limitbound} and \ref{almostsub} it follows by standard arguments that $F^{\prime\prime}(u,\cdot)$ is inner regular and subadditive on $\Ard$ (see, for example, Proposition 11.6 in \cite{brde}). Therefore, given a sequence $\e_n\to 0^+$ we can use the compactness property of $\Gamma$-convergence to construct a subsequence $\e_n$ (not relabeled) such that 

\begin{equation*}
\Gamma\hbox{-}\lim_n F_{\e_n}(u,A)=:\tilde{F}(u,A)
\end{equation*}

exists for every $(u,A)\in\lud\times\Ard$. By Proposition \ref{limitcoercivity} we know that $\tilde{F}(u,A)$ is finite only if $u\in BV(A,\{\pm 1\})$. We extend $\tilde{F}(u,\cdot)$ to $\mathcal{A}(D)$ setting

\begin{equation*}
F(u,A):=\sup\,\{\tilde{F}(u,A^{\prime}):\;A^{\prime}\subset\subset A,\, A^{\prime}\in\Ard\}.
\end{equation*}

To complete the proof it is enough to show that $F$ satisfies the assumptions of Theorem \ref{fonseca}. Again by standard arguments $F(u,\cdot)$ fulfills the assumptions of the De Giorgi-Letta criterion so that $F(u,\cdot)$ is the trace of a Borel measure. Since this Borel measure is finite on $D$ by Proposition \ref{limitbound}, it is indeed a Radon measure (Proposition 1.60 in \cite{modernmethods}). The locality property is easy to verify. By the properties of $\Gamma$-limits we know that $\tilde{F}(\cdot,A)$ is $\lud$-lower semicontinuous and so is $F(\cdot,A)$ as the supremum. The growth conditions (iv) in Theorem \ref{fonseca} follow from the Propositions \ref{limitcoercivity} and \ref{limitbound} which still hold for $F$ in place of $\tilde{F}$. Finally the additional symmetry property (v) holds for the discrete energies and thus it is conserved in the limit. The local version of the theorem is a direct consequence of our construction.
\end{proof}

\section{Convergence of boundary value problems}\label{sec:bdy}

In this section we investigate the convergence of the family of functionals $F_{\e}$ under discrete boundary conditions. Let $A\in\Ard$ and let $\varphi\in L^1(\partial A,\{\pm 1\})$. For the sake of simplicity let us assume that $\varphi$ is the trace of a polyhedral function $u_{\varphi}$ (more general boundary conditions may be considered (see Remark \ref{remarkboundary})). In particular we have

\begin{equation}\label{nojump}
\mathcal{H}^{n-1}(S(u_{\varphi})\cap\partial A)=0.
\end{equation}

We define a family of discrete energies with trace constraint as follows: Let $l_{\e}>0$ be such that

\begin{equation}\label{slowboundary}
\lim_{\e\to 0}l_{\e}=+\infty,\quad \lim_{\e\to 0}l_{\e}\e= 0.
\end{equation}

For fixed $u_{\varphi},\,\e>0$ and $l_{\e}>0$ we consider the functional $F^{\varphi,l_{\e}}_{\e}(\cdot,A):\lud\times\Ard\rightarrow[0,+\infty]$ defined as
\begin{align}\label{boundarytype}
F^{\varphi,l_{\e}}_{\e}(u,A):=
\begin{cases}
{F}_{\e}(u,A) &\mbox{if $u(x)=u_{\varphi}(x)$ if $\dist(x,\partial A)\leq l_{\e}\e$.}\\
+\infty &\mbox{otherwise.}
\end{cases}
\end{align}

In order to state and prove the main result of this section we need to introduce additional notation. Let $C_{\e}^{{\varphi},l_{\e}}(\Sigma,A)$ be the space of those $u\in C_{\e}(\Sigma)$ that agree with $u_{\varphi}$ at the boundary of $A$ as:

\begin{equation*}
C_{\e}^{{\varphi},l_{\e}}(\Sigma,A):=\{u\in C_{\e}(\Sigma):\;u(\e x)=u_{\varphi}(\e x)\text{ if }\dist(\e x,\partial A)\leq l_{\e}\e\}.
\end{equation*} 

Furthermore, given $u\in BV(D,\{\pm 1\})$, we set $u_{A,\varphi}:\rn\rightarrow\{\pm 1\}$ as

\begin{equation}\label{tracefunction}
u_{A,\varphi}(x):=
\begin{cases}
u(x) &\mbox{if $x\in A$,}\\
u_{\varphi}(x) &\mbox{otherwise.}
\end{cases}
\end{equation}

Since $A$ is regular we have $u_{A,\varphi}\in BV_{loc}(\rn,\{\pm 1\})$. The following convergence result holds:

\begin{theorem}\label{constrainedproblem}
Let $\Sigma$ be admissible and let $c_{nn}^{\e}$ and $c_{lr}^{\e}$ satisfy Hypothesis 1. For every sequence converging to $0$, let $\e_j$ and $\phi_{\Sigma}$ be as in Theorem \ref{mainthm1}. Assume that the limit integrand $\phi_{\Sigma}$ is continuous on $D\times S^{n-1}$. Then, for every set $A\in\Ard$, $A\subset\subset D$, the functionals $F^{\varphi,l_{\e_j}}_{\e_j}(\cdot,A)$ defined in (\ref{boundarytype}) $\Gamma$-converge with respect to the strong $\lud$-topology to the functional $F^{\varphi}(\cdot,A):\lud\rightarrow [0,+\infty]$ defined by
\begin{equation*}
F^{\varphi}(u,A)=
\begin{cases}
\int_{S(u_{A,\varphi})\cap \overline{A}}\phi_{\Sigma}(x,\nu_{u_{A,\varphi}})\,\mathrm{d}\mathcal{H}^{n-1} &\mbox{if $u\in BV(A,\{\pm 1\})$,}\\
+\infty &\mbox{otherwise.}
\end{cases}
\end{equation*}
\end{theorem}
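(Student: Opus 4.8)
\emph{Lower bound.} We may work with the $L^1(A)$-topology, since the functionals depend only on the restriction of $u$ to $A$ (cf. Remark \ref{localgamma}). Let $u_{\e_j}\in C^{\varphi,l_{\e_j}}_{\e_j}(\Sigma,A)$ with $u_{\e_j}\to u$ in $L^1(A)$ and $\sup_j F_{\e_j}(u_{\e_j},A)<+\infty$; by Proposition \ref{limitcoercivity}, $u\in BV(A,\{\pm1\})$. Fix $\eta>0$ small enough that $A^\eta\in\Ard$ and $A^\eta\subset\subset D$ (Theorem \ref{lipapprox}), and define $\hat u_{\e_j}\in C_{\e_j}(\Sigma)$ to equal $u_{\e_j}$ on the Voronoi cells centred in $A$ and $u_\varphi$ on those centred outside $A$; since $l_{\e_j}\e_j\to0$ we get $\hat u_{\e_j}\to u_{A,\varphi}$ in $L^1(A^\eta)$. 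Splitting $F_{\e_j}(\hat u_{\e_j},A^\eta)$ into the interactions internal to $A$ — which coincide with $F_{\e_j}(u_{\e_j},A)$ — the ones crossing $\partial A$, and the ones internal to $A^\eta\setminus A$, and observing that a short-range crossing pair has both endpoints within $\e_j(|\xi|+2R)\le l_{\e_j}\e_j$ of $\partial A$ for $\e_j$ small (so that $\hat u_{\e_j}$ only takes values of $u_\varphi$ on it), the estimates of Proposition \ref{limitbound} (separating $|\xi|\le M_\delta$ from $|\xi|>M_\delta$, $\delta>0$ arbitrary) give
\begin{equation*}
F_{\e_j}(u_{\e_j},A)\ \ge\ F_{\e_j}(\hat u_{\e_j},A^\eta)-C\delta-C\,\mathcal{H}^{n-1}\!\left(S(u_\varphi)\cap\overline{A^\eta\setminus A}\right)-o(1).
\end{equation*}
By Theorem \ref{mainthm1} on $A^\eta$ and $\phi_\Sigma\ge c>0$, $\liminf_j F_{\e_j}(\hat u_{\e_j},A^\eta)\ge\int_{S(u_{A,\varphi})\cap A^\eta}\phi_\Sigma\,\mathrm{d}\mathcal{H}^{n-1}\ge\int_{S(u_{A,\varphi})\cap\overline A}\phi_\Sigma\,\mathrm{d}\mathcal{H}^{n-1}$. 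Letting $\eta\to0$ (using \eqref{nojump}, $\mathcal{H}^{n-1}(S(u_\varphi)\cap\overline{A^\eta\setminus A})\to\mathcal{H}^{n-1}(S(u_\varphi)\cap\partial A)=0$) and then $\delta\to0$ proves $\liminf_j F^{\varphi,l_{\e_j}}_{\e_j}(u_{\e_j},A)\ge F^\varphi(u,A)$.

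\emph{Upper bound.} By lower semicontinuity of the $\Gamma$-$\limsup$, a diagonal argument, and the fact that $F^\varphi(u_n,A)\to F^\varphi(u,A)$ whenever the $u_n$ are polyhedral with $\mathcal{H}^{n-1}(S(u_n)\cap\partial A)=0$ and $u_n\to u$ strictly in $BV(A)$ — the interior contribution converging by Reshetnyak's continuity theorem and the boundary one by continuity of the trace on the Lipschitz set $A$ together with $\phi_\Sigma\in C(\overline A\times S^{n-1})$ and $\phi_\Sigma(x,-\nu)=\phi_\Sigma(x,\nu)$ (property (v) of Theorem \ref{fonseca}) — it suffices to build, for polyhedral $u$ with $\mathcal{H}^{n-1}(S(u)\cap\partial A)=0$, a sequence $\hat u_\e\in C^{\varphi,l_\e}_\e(\Sigma,A)$ with $\hat u_\e\to u$ in $L^1(A)$ and $\limsup_\e F_\e(\hat u_\e,A)\le F^\varphi(u,A)$. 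Fix $\delta>0$ generic and $\tau>0$ small. On $A_\delta:=\{x\in A:\dist(x,\partial A)>\delta\}$ take a localized recovery sequence $v_\e\to u$ from Theorem \ref{mainthm1} with $\limsup_\e F_\e(v_\e,A_\delta)\le\int_{S(u)\cap A_\delta}\phi_\Sigma\,\mathrm{d}\mathcal{H}^{n-1}$ and no concentration on $\partial A_\delta$. On the collar $A\setminus\overline{A_\delta}$, $u_{A,\varphi}$ jumps across the Lipschitz hypersurface $\partial A$ between the traces of $u$ and of $u_\varphi$, which are locally constant off an $\mathcal{H}^{n-1}$-null subset of $\partial A$ (here \eqref{nojump} enters again). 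By a Besicovitch covering with respect to $\mathcal{H}^{n-1}\res\partial A$ choose finitely many disjoint cubes $Q_i=Q_{\nu_i}(x_i,\rho_i)$, $x_i\in\partial A$, $\nu_i=\nu_A(x_i)$, $\rho_i<\delta/2$, covering $\partial A$ up to an error $\tau$, in which $\partial A$ is $\rho_i\tau$-close to $x_i+\nu_i^\perp$ and both traces are constant. Using $\phi_\Sigma(x_i,\nu_i)=\limsup_{\rho\to0}\rho^{1-n}m(u_{x_i,\nu_i},Q_{\nu_i}(x_i,\rho))$, shrink $\rho_i$, pick an almost optimal competitor $w$ for $m(u_{x_i,\nu_i},Q_i)$ and a recovery sequence $w_\e\to w$; since $w\equiv u_{x_i,\nu_i}$ near $\partial Q_i$, the sequences $w_\e$ and the interpolation of $u_{x_i,\nu_i}$ converge there to the same limit, so by the averaging construction of Proposition \ref{almostsub} they can be glued in a thin annulus near $\partial Q_i$; translating the jump of $u_{x_i,\nu_i}$ into $A$ by $O(\rho_i\tau)+l_\e\e$ (possible as $l_\e\e\to0$) yields $\theta^i_\e\in C_\e(\Sigma)$ equal to $u_\varphi$ on $\{\dist(\cdot,\partial A)\le l_\e\e\}$ and with $\limsup_\e F_\e(\theta^i_\e,Q_i)\le(\phi_\Sigma(x_i,\nu_i)+C\tau)\rho_i^{n-1}$. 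Setting $\hat u_\e:=\theta^i_\e$ on $Q_i\cap(A\setminus\overline{A_\delta})$, $\hat u_\e:=v_\e$ on $A_\delta$, $\hat u_\e:=u_\varphi$ near $\partial A$, and a bounded-energy filler on the (small) uncovered part of the collar, repeated use of Proposition \ref{almostsub} (adjacent pieces share the same $\{\pm1\}$-value on the interfaces up to an $L^1$-error $O(\delta+\tau)$) gives $\hat u_\e\in C^{\varphi,l_\e}_\e(\Sigma,A)$, $\hat u_\e\to u$ in $L^1(A)$ up to $O(\delta+\tau)$, and
\begin{equation*}
\limsup_\e F_\e(\hat u_\e,A)\ \le\ \int_{S(u)\cap A_\delta}\phi_\Sigma\,\mathrm{d}\mathcal{H}^{n-1}+\sum_i\bigl(\phi_\Sigma(x_i,\nu_i)+C\tau\bigr)\rho_i^{n-1}+C(\delta+\tau).
\end{equation*}
By continuity of $\phi_\Sigma$, refining the cover and letting $\tau\to0$ turns $\sum_i\phi_\Sigma(x_i,\nu_i)\rho_i^{n-1}$ into $\int_{\partial A}\phi_\Sigma(x,\nu_A)\,\mathrm{d}\mathcal{H}^{n-1}$; sending $\tau\to0$, $\delta\to0$ and invoking the reduction above we obtain $\limsup_\e F^{\varphi,l_\e}_\e(\hat u_\e,A)\le\int_{S(u)\cap A}\phi_\Sigma\,\mathrm{d}\mathcal{H}^{n-1}+\int_{\partial A}\phi_\Sigma(x,\nu_{u_{A,\varphi}})\,\mathrm{d}\mathcal{H}^{n-1}=F^\varphi(u,A)$.

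\emph{Main difficulty.} The delicate point is the boundary part of the recovery sequence: an unconstrained recovery sequence for $u_{A,\varphi}$ does not satisfy the constraint, and forcing agreement with $u_\varphi$ on the layer $\{\dist(\cdot,\partial A)\le l_\e\e\}$ must be achieved at exactly the cost $\int_{\partial A}\phi_\Sigma(x,\nu_A)\,\mathrm{d}\mathcal{H}^{n-1}$. This makes necessary the covering construction over the merely Lipschitz hypersurface $\partial A$ — where continuity of $\phi_\Sigma$ is precisely what allows passing from the cell values $\phi_\Sigma(x_i,\nu_i)$ to the surface integral — and a careful bookkeeping of the nested limits ($\e\to0$ first, then the cube sizes, the collar width $\delta$, the covering error $\tau$, and finally the polyhedral approximation).
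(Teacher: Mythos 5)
Your lower-bound argument is essentially the one in the paper: extend the constrained sequence by the datum outside $A$, invoke the (subsequential) unconstrained $\Gamma$-liminf of Theorem \ref{mainthm1} on a slightly larger Lipschitz set, and absorb the crossing and collar contributions using $\mathcal{H}^{n-1}(S(u_{\varphi})\cap\partial A)=0$ together with the long-range tail; that part is correct.

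The upper bound, however, has a genuine gap. In your boundary-layer construction you cover \emph{all} of $\partial A$ by cubes $Q_{\nu_i}(x_i,\rho_i)$ and in \emph{every} such cube you insert an almost minimizer of the cell problem $m(u_{x_i,\nu_i},Q_i)$, whose cost is of order $\rho_i^{n-1}$ since $\phi_{\Sigma}\geq 1/C>0$ (Theorem \ref{fonseca}(iv), Proposition \ref{limitcoercivity}). Hence the estimate you actually obtain is
\[
\limsup_j F^{\varphi,l_{\e_j}}_{\e_j}(\hat u_{\e_j},A)\leq\int_{S(u)\cap A}\phi_{\Sigma}(x,\nu_u)\,\mathrm{d}\mathcal{H}^{n-1}+\int_{\partial A}\phi_{\Sigma}(x,\nu)\,\mathrm{d}\mathcal{H}^{n-1},
\]
with $\nu$ the normal to $\partial A$, and this strictly exceeds $F^{\varphi}(u,A)=\int_{S(u_{A,\varphi})\cap\overline A}\phi_{\Sigma}\,\mathrm{d}\mathcal{H}^{n-1}$ whenever the trace of $u$ agrees with $\varphi$ on a portion of $\partial A$ of positive $\mathcal{H}^{n-1}$-measure: the boundary integral should run only over $S(u_{A,\varphi})\cap\partial A$. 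The extreme case $u=u_{\varphi}$ on $A$ with $S(u_{\varphi})\cap\overline A=\emptyset$ shows the construction cannot give the statement (it yields a strictly positive bound while $F^{\varphi}(u,A)=0$). Your closing identity, written as $\int_{\partial A}\phi_{\Sigma}(x,\nu_{u_{A,\varphi}})\,\mathrm{d}\mathcal{H}^{n-1}$, hides this because $\nu_{u_{A,\varphi}}$ is defined only on $S(u_{A,\varphi})$. Within your scheme the fix is to insert the cell-problem competitor only in those cubes where the (constant) traces of $u$ and $u_{\varphi}$ differ, extending by the common constant elsewhere, so that the Riemann sum converges to $\int_{S(u_{A,\varphi})\cap\partial A}\phi_{\Sigma}$. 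Note that the paper avoids the covering altogether: it first proves the limsup for $u$ equal to $u_{\varphi}$ near $\partial A$ by modifying an unconstrained recovery sequence through the slicing/averaging argument of Proposition \ref{almostsub} (extra cost controlled by $\mathcal{H}^{n-1}(S(u_{\varphi}))$ in a thin collar, hence negligible), and then reaches general $u$ via the strict approximation of Lemma \ref{reshetnyaktype} and Reshetnyak's continuity theorem, which is exactly where the continuity of $\phi_{\Sigma}$ enters; your reduction to strictly converging polyhedral approximations plays the same role and is acceptable, but the boundary construction itself must be corrected as indicated.
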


\begin{proof}
By Proposition \ref{limitcoercivity} we may suppose $u\in BV(A,\{\pm 1\})$.\\

\smallskip

\noindent Proof of the $\liminf$-inequality.\\

\smallskip

We show the lower bound taking only into account the long-range term, the same argument works if we include the short-range interactions at the expenses of heavier notation. Without loss of generality let $u_{j}\to u$ in $\lud$ such that 
\begin{equation*}
\liminf_j F^{\varphi,l_{\e_j}}_{\e_j}(u_{j},A)<C.
\end{equation*}
Hence $u_{j}\in C_{\e_j}^{\varphi,l_{\e_j}}(\Sigma,A)$. Given $\delta>0$, by (\ref{nojump}) and choosing appropriate level sets of the signed distance function of $\partial A$, by Theorem \ref{lipapprox} and Remark \ref{alsointerior} there are $A_1\subset\subset A\subset\subset A_2$ Lipschitz sets such that

\begin{align*}
&\mathcal{H}^{n-1}(S(u_{\varphi})\cap (A_2\backslash \overline{A_1}))\leq\delta,\\
&\mathcal{H}^{n-1}(S(u_{\varphi})\cap\partial A_1)=0,\\
&\mathcal{H}^{n-1}(S(u_{\varphi})\cap\partial A_2)=0.
\end{align*}
Let $u_{\varphi,j}$ be the function defined by

\begin{equation*}
u_{\varphi,j}(\e_j x)=u_{\varphi}(\e_j x).
\end{equation*}

Then $u_{\varphi,j}\to u_{\varphi}$ in $\lud$ and, as in the proof of Proposition \ref{limitbound}, by the choice of $A_1$ and $A_2$ we obtain

\begin{equation}\label{deltaappr}
\limsup_{j}F^{\varphi,l_{\e_j}}_{\e_j}(u_{\varphi,j},A_2\backslash\overline{A_1})\leq C\,\delta.
\end{equation}

We define $\tilde{u}_{j}\in C_{\e_j}(\Sigma)$ by

\begin{equation*}
\tilde{u}_{j}(\e_j x)=\mathds{1}_{A}(\e_j x)u_{j}(\e_j x)+(1-\mathds{1}_{A}(\e_j x))u_{\varphi,j}(\e_j x).
\end{equation*}

Note that $\tilde{u}_j\to u_{A,\varphi}$ in $\lud$. Setting

\begin{equation*}
S^{\xi,j}:=\{x=y+t\,\xi^{\prime}:\;y\in\partial A,\,|t|\leq\e_j,\xi^{\prime}\in\xi+[-r^{\prime},r^{\prime}]^n\}\cap A_2,
\end{equation*}
it holds that 
 
\begin{align}
F_{\e_j}^{\varphi,l_{\e_j}}(\tilde{u}_{\e_j},A_2)&\leq\, F_{\e_j}^{\varphi,l_{\e_j}}(u_{\e_j},A)+F_{\e_j}^{\varphi,l_{\e_j}}(u_{\varphi,\e_j},A_2\backslash \overline{A_1} )\nonumber
\\
+\sum_{\xi\in\rzn}&\sum_{\a\in R^{\xi, A}_{lr,\e_j}(S^{\xi,j})}\e_j^{n-1}c_{lr}^{\e_j}(x_{\alpha},x_{\a+\xi})|\tilde{u}_{j}(\e_j x_{\alpha})-\tilde{u}_{j}(\e_j x_{\a+\xi})|,\label{fundestimate}
\end{align}

where we denoted by 

\begin{equation*}
R^{\xi,A}_{lr,\e_j}(S^{\xi,j})=\left\{\alpha\in R_{lr,\e_{j}}^{\xi}(S^{\xi,j}):\, \e_{j}x_{\alpha}\in A,\, \e_{j}x_{\alpha+\xi}\in D\backslash A\right\}
\end{equation*}
By Hypothesis 1 there exists $M_{\delta}>0$ such that $\sum_{|\xi|>M_{\delta}}J_{lr}(|\hat{\xi}|)(|\xi|+2R)\leq \delta$.  As in the proof of Proposition \ref{almostsub} we obtain
\begin{equation}\label{lrdecay}
\sum_{|\xi|>M_{\delta}}\sum_{R^{\xi,A}_{lr,\e_j}(S^{\xi,j})}\e_j^{n-1}J_{lr}(|\hat{\xi}|)\leq C\mathcal{H}^{n-1}(\partial A)\,\delta.
\end{equation}

For interactions where $|\xi|\leq M_{\delta}$ and $j$ large enough, we have that $S^{\xi,j}\subset A_2\backslash\overline{A_1}$. Moreover, if $l_{\e_j}>M_{\delta}+r$, then by the boundary conditions on $u_j$,

\begin{equation*}
\sum_{|\xi|\leq M_{\delta}}\sum_{\a\in R^{\xi,A}_{lr,\e_j}(S^{\xi,j})}\e_j^{n-1}c_{lr}^{\e_j}(x_{\alpha},x_{\a+\xi})|\tilde{u}_{j}(\e_j x_{\alpha})-\tilde{u}_{j}(\e_j x_{\a+\xi})|
\leq F_{\e_j}^{\varphi,l_{\e_j}}(u_{\varphi,j},A_2\backslash\overline{A_1}).
\end{equation*}

From Theorem \ref{mainthm1} and (\ref{deltaappr}),(\ref{fundestimate}) and (\ref{lrdecay}) we infer

\begin{equation*}
F(u_{A,\varphi},A_2)\leq\liminf_{j}F^{\varphi,l_{\e_j}}_{\e_j}(u_{\e_j},A)+C\,\delta.
\end{equation*}

Now letting $A_2\downarrow \overline{A}$ and then $\delta\to 0$ we obtain the $\liminf$-inequality.
\\
\smallskip

\noindent Proof of the $\limsup$-inequality.\\

\smallskip

We start assuming that $u=u_{\varphi}$ in a neighbourhood of $\partial A$. Let $C_{\e_j}(\Sigma)\ni u_{j}\to u$ in $\lud$ such that

\begin{equation*}
\lim_{j}F_{\e_j}(u_{j},A)=F(u,A).
\end{equation*}
Given $\delta>0$ there exist $M_{\delta}>0$ such that $\sum_{|\xi|>M_{\delta}}J_{lr}(|\hat{\xi}|)(|\xi|+2R)\leq\delta$. Using Theorem \ref{lipapprox} and Remark \ref{alsointerior} we choose regular sets $A_1\subset\subset A_2\subset\subset A$ such that

\begin{align}
&u=u_{\varphi}\quad\text{on }A\backslash \overline{A_1},\label{bordo}\\
&\mathcal{H}^{n-1}(S(u_{\varphi})\cap\partial A_1)=0.\label{polyfunction}
\end{align}

We now proceed by an argument similar to the proof of Proposition \ref{almostsub}. We fix $d\leq\dist(A_1,\partial A_2)$ and set $N_j=[\frac{d}{\e_j(M_{\delta}+r)}]$ and, for $k\in\mathbb{N}$,

\begin{equation*}
A_{j,k}:=\{x\in A:\;\dist(x,A_1)<k\,\e_j\,(M_{\delta}+r)\}.
\end{equation*}

We also define $u^k_j\in C_{\e_j}(\Sigma)$ setting

\begin{equation*}
u^k_j(\e_j x)=
\begin{cases}
u_{\varphi,j}(\e_j x) &\mbox{if $\e_j x\notin A_{j,k}$ and $\dist(\e_j x,A)\leq l_{\e_j}\e_j$,}\\
u_j(\e_j x) &\mbox{otherwise,}
\end{cases}
\end{equation*}

where $u_{\varphi,j}$ is as in the proof of the $\liminf$-inequality. Again we get

\begin{align*}
F_{\e_j}(u^k_j,A)\leq &F_{\e_j}(u_j,A)+F_{\e_j}(u_{\varphi,j},A\backslash \overline{A_1})
\\
&+\sum_{\xi\in\rzn}\e^{n-1}J_{lr}(|\hat{\xi}|)\sum_{\a\in R_{\xi}^{\e_j}(S_k^{\xi,j})}|u^k_j(\e_j x_{\a})-u^k_j(\e_j x_{\a+\xi})|,
\end{align*}

where 
\begin{equation*}
S_k^{\xi,j}:=\{x=y+t\,\xi^{\prime}:\;y\in\partial A_{k,j},\,|t|\leq\e_j,\xi^{\prime}\in\xi+[-r^{\prime},r^{\prime}]^n\}\cap A.
\end{equation*}

As in the proof of Proposition \ref{almostsub} we can show that, at least for small $d$,

\begin{equation*}
\sum_{|\xi|>M_{\delta}}\sum_{\a\in R_{\xi}^{\e_j}(S_k^{\xi,j})}\e^{n-1}J_{lr}(|\hat{\xi}|)\leq C\,\mathcal{H}^{n-1}(\partial A_1)\delta.
\end{equation*}

To control the interactions where $|\xi|\leq M_{\delta}$, we use the averaging technique again to obtain $k_j\in\{1,\dots,N_j\}$ and the corresponding sequence $u^{k_j}_j$ fulfilling the boundary conditions (at least for large $j$ because of (\ref{slowboundary})) and $u^{k_j}_j\to u$ such that

\begin{equation*}
\limsup_j F^{\varphi,l_{\e_{j}}}_{\e_j}(u^{k_j}_j,A)\leq F(u,A)+C\,\mathcal{H}^{n-1}(\partial A_1)\delta+C\,\mathcal{H}^{n-1}(S(u_{\varphi})\cap (A\backslash\overline{A_1})),
\end{equation*}
where we have also used that, by \eqref{bordo}

\begin{equation*}
\limsup_j F^{\varphi,l_{\e_{j}}}_{\e_j}(u_{\varphi,j},A\backslash\overline{A_1})\leq C\,\mathcal{H}^{n-1}(S(u_{\varphi})\cap (A\backslash\overline{A_1})).
\end{equation*}

Letting first $\delta\to 0$ and then $A_1\uparrow A$ we finally get

\begin{equation*}
\Gamma\hbox{-}\limsup_j F^{\varphi,l_{\e_{j}}}_{\e_j}(u,A)\leq F(u,A).
\end{equation*}

\hspace*{0,5cm}
Now given any $u\in BV(A,\{\pm 1\})$ let $u_n$ be the sequence given by Lemma \ref{reshetnyaktype} and let $A^{\prime}\in\Ard$, $A\subset\subset A^{\prime}$. By lower semicontinuity and Reshetnyak's continuity theorem we have

\begin{equation*}
\Gamma\hbox{-}\limsup_j F^{\varphi,l_{\e_{j}}}_{\e_j}(u,A)\leq \liminf_n \left(\Gamma\hbox{-}\limsup_j F^{\varphi,l_{\e_{j}}}_{\e_j}(u_n,A)\right)\leq \liminf_n F(u_n,A^{\prime})=F(u_{A,\varphi},A^{\prime}).
\end{equation*}

Letting $A^{\prime}\downarrow A$ yields the upper bound.
\end{proof}

\begin{theorem}\label{convofminimizers}
Let $A\in\Ard$, $A\subset\subset D$. Under the assumptions of Theorem \ref{constrainedproblem}, the following holds:
\begin{enumerate}
\renewcommand{\labelenumi}{(\roman{enumi})}
\item
\begin{equation*}
\lim_j\left(\inf_{u\in BV(A,\{\pm 1\})}F^{\varphi,l_{\e_j}}_{\varepsilon_j}(u,A)\right)=\min_{u\in BV(A,\{\pm 1\})}F^{\varphi}(u,A).
\end{equation*}
\item Moreover, if $(u_j)_j$ is a converging sequence in $L^1(A,\{\pm 1\})$ such that
\begin{equation*}
F^{\varphi,l_{\e_j}}_{\varepsilon_j}(u_j,A)=\inf_{u\in BV(A,\{\pm 1\})}F^{\varphi,l_{\e_j}}_{\varepsilon_j}(u,A)+{ \scriptstyle \mathcal{O}} (1),
\end{equation*}
then its limit is a minimizer of $F^{\varphi}(\cdot,A)$.
\end{enumerate}
\end{theorem}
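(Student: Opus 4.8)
The plan is to get Theorem \ref{convofminimizers} as the usual by\nobreakdash-product of $\Gamma$-convergence, namely the convergence of minimum problems: once the $\Gamma$-convergence of $F^{\varphi,l_{\e_j}}_{\e_j}(\cdot,A)$ to $F^{\varphi}(\cdot,A)$ of Theorem \ref{constrainedproblem} is available, it suffices to complement it with an equicoercivity statement and then invoke the fundamental theorem of $\Gamma$-convergence (see e.g.\ \cite{GCB, DM}). Since all the functionals involved depend only on the restriction of their argument to $A$, I would first note that the $\Gamma$-convergence of Theorem \ref{constrainedproblem} holds verbatim with respect to the strong $L^1(A)$-topology, so that the whole argument can be run on $L^1(A)$; in particular $\inf$ and $\min$ may be taken over $BV(A,\{\pm 1\})$ as in the statement.

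The key step is equicoercivity, i.e.\ showing that any sequence $u_j\in C_{\e_j}(\Sigma)$ with $\Lambda:=\sup_j F^{\varphi,l_{\e_j}}_{\e_j}(u_j,A)<+\infty$ is precompact in $L^1(A)$. Such a $u_j$ lies in $C_{\e_j}^{\varphi,l_{\e_j}}(\Sigma,A)$ and has equibounded unconstrained energy $F_{\e_j}(u_j,A)\le\Lambda$. Arguing exactly as in the proof of Proposition \ref{limitcoercivity}, through estimate \eqref{jumpsetchara} and Hypothesis 1, for every fixed $\eta>0$ and every $j$ large enough that $(A_{\eta})^{\e_j R}\subset A$ one obtains $\mathcal{H}^{n-1}(S(u_j)\cap A_{\eta})\le C\,F_{\e_j}(u_j,A)\le C\Lambda$ with $C$ independent of $\eta$ and $j$. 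Since $A_{\eta}\uparrow A$ as $\eta\to0$ and $\mathcal{H}^{n-1}(S(u_j)\cap A)$ is finite (only finitely many Voronoi interfaces meet the bounded set $A$, each of measure at most $M_2$ by Lemma \ref{cellproperty}), letting $\eta\to0$ gives $\sup_j\mathcal{H}^{n-1}(S(u_j)\cap A)<+\infty$; Theorem \ref{compactemb} then yields a subsequence converging in $L^1(A)$ to a function of $BV(A,\{\pm 1\})$.

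With equicoercivity established, (i) follows by the two-sided estimate: applying the $\liminf$-inequality of Theorem \ref{constrainedproblem} to an almost-minimizing sequence (which has equibounded energy for large $j$, hence a cluster point $u\in BV(A,\{\pm 1\})$) gives $\liminf_j\inf_{BV(A,\{\pm 1\})}F^{\varphi,l_{\e_j}}_{\e_j}(\cdot,A)\ge F^{\varphi}(u,A)\ge\min F^{\varphi}(\cdot,A)$, while testing the $\limsup$-inequality at a minimizer of $F^{\varphi}(\cdot,A)$ produces a recovery sequence that yields the reverse inequality. Here one also uses that $F^{\varphi}(\cdot,A)$ attains its minimum, which follows from its $L^1(A)$-lower semicontinuity (it is a $\Gamma$-limit) together with the coercive bound $F^{\varphi}(u,A)\ge\tfrac1C\mathcal{H}^{n-1}(S(u)\cap A)$ (inherited from the growth condition (iv) of Theorem \ref{fonseca}) and Theorem \ref{compactemb}; finiteness of the infimum is seen by testing with $u_{\varphi}|_A$, using \eqref{nojump}. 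Part (ii) is then immediate: a sequence $(u_j)_j$ as in the statement has equibounded energy for large $j$, so by the equicoercivity above its (assumed) limit $u$ lies in $BV(A,\{\pm 1\})$ and satisfies $F^{\varphi}(u,A)\le\liminf_j F^{\varphi,l_{\e_j}}_{\e_j}(u_j,A)=\min F^{\varphi}(\cdot,A)$ by (i), hence $u$ is a minimizer.

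I do not expect a genuine obstacle in this argument; the only point that deserves some care is the equicoercivity step, and precisely the verification that the jump-set estimate of Proposition \ref{limitcoercivity} is uniform in the interior-approximation parameter $\eta$, so that it survives the passage $\eta\to0$ to $A$ itself even though the boundary layer $l_{\e_j}\e_j$ shrinks to zero.
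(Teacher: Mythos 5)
Your proposal is correct and follows essentially the same route as the paper, which disposes of this theorem in two lines by observing that the $\Gamma$-convergence of Theorem \ref{constrainedproblem} also holds with respect to the $L^1(A)$-topology and that the constrained functionals are equicoercive in $L^1(A)$, and then invoking the fundamental theorem of $\Gamma$-convergence; your write-up merely fills in the equicoercivity detail that the paper leaves implicit. One small caution: since the jump-set estimate on $A_{\eta}$ is only valid for $j\geq j(\eta)$, the uniform bound $\sup_j\mathcal{H}^{n-1}(S(u_j)\cap A)<+\infty$ does not follow by simply ``letting $\eta\to 0$''; the clean way is to extract (by a diagonal argument) a subsequence converging in $L^1(A_{\eta})$ for every $\eta$ and then use $|u_j|=1$ a.e.\ together with $|A\setminus A_{\eta}|\to 0$ to upgrade this to $L^1(A)$-convergence, exactly in the spirit of Proposition \ref{limitcoercivity}.
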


\begin{proof}
Note that Theorem \ref{constrainedproblem} also holds when we take the $\Gamma$-limit with respect to the $L^1(A)$-topology. Thus the statement follows immediately from the general theory of $\Gamma$-convergence since the functionals are equicoercive in $L^1(A)$.
\end{proof}

\begin{remark}\label{remarkboundary}\rm{
\item[(i)] If we have only finite range of interactions, i.e. $c_{lr}^{\e}(x,y)=0$ for $|x-y|>L$ then it is enough to take $l_{\e}>L$.
\item[(ii)] Defining an appropriate extension based on an abstract construction using Sard's lemma, it is possible to prove the previous theorem for arbitrary functions $\varphi\in L^1(\partial A,\{\pm 1\})$.
\item[(iii)] If the limit integrand is not continuous in the space variable, one can prove a convergence result for boundary value problems, too. However, in that case one has to define a different discrete trace. In the case of periodic lattices this result is contained in \cite{AlGe14}.
}
\end{remark}

\section{From deterministic to stochastic energies}\label{sec:sto}
So far we have considered energies defined on a fixed, possibly non-periodic network. In this section we replace the fixed lattice $\Sigma$ by a suitable random variable generating the set of points and, as a result of their interaction, random energies. Let $(\Om,\mathcal{F},\mathbb{P})$ be a probability space and let the $\sigma$-algebra $\mathcal{F}$ be complete. \\

In what follows we introduce the stochastic framework that we will use later on to define the random energies.

\begin{definition}
We say that a family $(\tau_z)_{z\in \mathbb{Z}^n},\tau_z:\Om\rightarrow\Om$, is an additive group action on $\Om$ if
\begin{equation*}
\tau_{z_1+z_2}=\tau_{z_2}\circ\tau_{z_1}\quad\forall z_1,z_2\in\mathbb{Z}^n.
\end{equation*} Such an additive group action is called measure preserving if
\begin{equation*}
\mathbb{P}(\tau_z B)=\mathbb{P}(B)\quad \forall B\in\mathcal{F},\,z\in\mathbb{Z}^n.
\end{equation*}
If in addition, for all $B\in\mathcal{F}$ we have
\begin{equation*}
(\tau_z(B)=B\quad\forall z\in \mathbb{Z}^n)\quad\Rightarrow\quad\mathbb{P}(B)\in\{0,1\},
\end{equation*}
then $(\tau_z)_{z\in\mathbb{Z}^n}$ is called ergodic. 
\end{definition}
We set $\mathcal{I}=\{[a,b):a,b\in \mathbb{Z}^{n-1}, a\neq b\}$, where $[a,b):=\{x\in\R^{n-1}:\;a_i\leq x_i<b_i\;\forall i\}$. We introduce the notion of discrete subadditive stochastic processes.

\begin{definition}{\label{discreteprocess}}
A function $\mu:\mathcal{I}\rightarrow L^1(\Omega)$ is said to be a discrete subadditive stochastic process if the following properties hold $\mathbb{P}$-almost surely:

\begin{enumerate}
\renewcommand{\labelenumi}{(\roman{enumi})}
\item \; for every $I\in\mathcal{I}$ and for every finite partition $(I_k)_{k\in K}\subset\mathcal{I}$ of $I$ we have
\begin{equation*}
\mu(I,\omega)\leq \sum_{k\in K}\mu(I_k,\omega).
\end{equation*}
\item \; \begin{equation*}
\inf\left\{\frac{1}{|I|}\int_{\Omega}\mu(I,w)\;\mathrm{d}\mathbb{P}(\omega):\;I\in\mathcal{I}\right\} >-\infty.
\end{equation*}
\end{enumerate}
\end{definition}

Moreover we make use of the following pointwise ergodic theorem (see \cite{ergodic}).

\begin{theorem}{\label{ergodicthm}}
Let $\mu:\mathcal{I}\rightarrow L^1(\Om)$ be a discrete subadditive stochastic process and let $I_k=[-k,k)^{n-1}$. If $\mu$ is stationary with respect to a measure preserving group action $(\tau_z)_{z\in\mathbb{Z}^{n-1}}$, that means
\begin{equation*}
\forall I\in\mathcal{I},\;\forall z\in\mathbb{Z}^{n-1}:\quad\mu(I+z,\omega)=\mu(I,\tau_z\omega)\quad\text{almost surely},
\end{equation*}
then there exists $\Phi:\Omega\rightarrow\mathbb{R}$ such that, for $\mathbb{P}$-almost every $\omega$,
\begin{equation*}
\lim_{k\to +\infty}\frac{\mu(I_k,\omega)}{\mathcal{H}^{n-1}(I_k)}=\Phi(\omega).
\end{equation*}
\end{theorem}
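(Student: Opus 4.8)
This is the multiparameter (Akcoglu--Krengel) subadditive ergodic theorem, and I would prove it along the classical route: produce a candidate limit by a Fekete argument, obtain an upper bound by tiling plus the multiparameter Birkhoff ergodic theorem, and obtain the matching lower bound by a Vitali-type covering at ``good scales''. First I would set $a(I):=\int_\Omega\mu(I,\omega)\,\mathrm{d}\mathbb{P}(\omega)$. By (i) the set function $I\mapsto a(I)$ is subadditive under finite partitions of rectangles of $\mathcal{I}$, and by (ii) $\inf_I a(I)/|I|>-\infty$; the multidimensional Fekete lemma then gives
\begin{equation*}
\gamma:=\inf_{I\in\mathcal{I}}\frac{a(I)}{|I|}=\lim_{k\to\infty}\frac{a(I_k)}{\mathcal{H}^{n-1}(I_k)}\in(-\infty,+\infty).
\end{equation*}
Letting $\mathcal{J}\subset\mathcal{F}$ be the $\sigma$-algebra of $(\tau_z)$-invariant sets, the conditional version of subadditivity along the sequence $I_{2^j m}$ (which is pointwise non-increasing in $j$ after normalization) shows that
\begin{equation*}
\Phi(\omega):=\lim_{m\to\infty}\frac{1}{\mathcal{H}^{n-1}(I_m)}\,\mathbb{E}\bigl[\mu(I_m,\cdot)\,\big|\,\mathcal{J}\bigr](\omega)
\end{equation*}
exists $\mathbb{P}$-a.s., with $\mathbb{E}[\Phi]=\gamma$, and $\Phi$ is $\mathbb{P}$-a.s.\ constant $=\gamma$ when $(\tau_z)$ is ergodic (then $\mathcal{J}$ is trivial). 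The goal is to show $\mu(I_k,\omega)/\mathcal{H}^{n-1}(I_k)\to\Phi(\omega)$ a.s.

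For the upper bound I would fix $m$ and, for $k\gg m$, cover $I_k$ by the $\sim(k/m)^{n-1}$ translates $I_m+z$ that fit inside, together with a boundary layer of width $O(m)$. Property (i) gives $\mu(I_k,\omega)\le\sum_z\mu(I_m+z,\omega)+(\text{boundary})$, and stationarity turns $\mu(I_m+z,\omega)$ into $\mu(I_m,\tau_z\omega)$. Applying the multiparameter Birkhoff theorem to the $L^1$ function $\omega\mapsto\mu(I_m,\omega)$ (and, for the boundary layer, to $\omega\mapsto\mu(I_0,\omega)$ on unit cubes $I_0=[0,1)^{n-1}$, whose total contribution is $O(m/k)$ times an a.s.\ finite quantity, hence negligible), then dividing by $\mathcal{H}^{n-1}(I_k)$ and letting $k\to\infty$ and $m\to\infty$, yields $\limsup_k\mu(I_k,\omega)/\mathcal{H}^{n-1}(I_k)\le\Phi(\omega)$ a.s.; in particular $f(\omega):=\liminf_k\mu(I_k,\omega)/\mathcal{H}^{n-1}(I_k)\le\Phi(\omega)$ a.s. Comparing $\mu(I_k,\tau_{e_i}\omega)=\mu(I_k+e_i,\omega)$ with $\mu(I_k,\omega)$ — the discrepancy is $\mu$ over thin lattice shells, which is $o(\mathcal{H}^{n-1}(I_k))$ by Birkhoff applied to $|\mu(I_0,\cdot)|$ — shows in addition that $f$ is $(\tau_z)$-invariant, i.e.\ $\mathcal{J}$-measurable.

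It then suffices to prove $\mathbb{E}[f]\ge\gamma$: combined with $f\le\Phi$ a.s.\ and $\mathbb{E}[\Phi]=\gamma$ this forces $\int_\Omega(\Phi-f)\,\mathrm{d}\mathbb{P}=0$, hence $f=\Phi$ a.s., and then $f\le\limsup_k\mu(I_k)/\mathcal{H}^{n-1}(I_k)\le\Phi=f$ gives the asserted convergence. This is the step I expect to be the main obstacle, precisely because subadditivity provides no pointwise lower bound on $\mu$, so the inequality $\mathbb{E}[f]\ge\gamma$ is not accessible by elementary manipulation of $a(I_k)/|I_k|$. The standard way out is a stopping-time/covering argument. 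For $\varepsilon>0$ and a.e.\ $\omega$, by definition of $f$ there are arbitrarily large scales $\ell$ with $\mu(I_\ell,\omega)\le(f(\omega)+\varepsilon)\mathcal{H}^{n-1}(I_\ell)$; capping scales at a large $L$ chosen so that the set $G_L$ of $\omega$ admitting such an $\ell\le L$ has measure close to $1$, one finds for large $K$ that the good points $x\in\mathbb{Z}^{n-1}$ (those with $\tau_x\omega\in G_L$, so that $\mu(I_{\ell(x)}+x,\omega)=\mu(I_{\ell(x)},\tau_x\omega)\le(f(\omega)+\varepsilon)\mathcal{H}^{n-1}(I_{\ell(x)})$ by invariance of $f$) have density close to $\mathbb{P}(G_L)$ in $I_K$, again by Birkhoff. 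An iterated Vitali covering lemma in $\mathbb{Z}^{n-1}$ then extracts disjoint such good sub-rectangles of $I_K$ covering all but an arbitrarily small fraction; partitioning the leftover into unit cells and using property (i),
\begin{equation*}
\mu(I_K,\omega)\le\sum_{\text{good cubes}}\mu(I_{\ell(x)},\tau_x\omega)+\sum_{z\in\text{leftover}}\mu(I_0,\tau_z\omega).
\end{equation*}
Taking expectations (using stationarity, $\mathbb{E}[f\circ\tau_x]=\mathbb{E}[f]$, and $\mathbb{E}[\mu(I_0,\cdot)]<\infty$ to absorb the leftover, which has small density) gives $a(I_K)/\mathcal{H}^{n-1}(I_K)\le\mathbb{E}[f]+\varepsilon+(\text{errors}\to0)$; letting $K\to\infty$, then $L\to\infty$ and $\varepsilon\to0$ yields $\gamma\le\mathbb{E}[f]$, as required.

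Finally, an alternative that sidesteps much of the $n$-dimensional covering bookkeeping is to reduce to the one-parameter case: freezing all but the last coordinate produces, for each frozen value, a one-parameter subadditive process to which Kingman's theorem applies, and the resulting a.s.\ limits form an $(n-2)$-parameter subadditive (and stationary) process, so the statement follows by induction on the number of parameters — the work in that route being the measurability, subadditivity and stationarity of the iterated limits. Either way, the delicate ingredient is the same: controlling $\mu$ in the absence of a pointwise lower bound.
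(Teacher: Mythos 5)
You should first note that the paper contains no proof of this statement: Theorem \ref{ergodicthm} is quoted as a black box from Akcoglu and Krengel \cite{ergodic} and is only \emph{applied} in the proof of Theorem \ref{mainthm2}, so there is no internal argument to compare yours against. Judged on its own, the main route you sketch is precisely the classical Akcoglu--Krengel-type argument, and its architecture is sound: Fekete for the limit $\gamma$ of the means, tiling plus the multiparameter Birkhoff theorem for $\limsup_k\mu(I_k,\omega)/\mathcal{H}^{n-1}(I_k)\le\Phi(\omega)$, invariance of $f=\liminf_k\mu(I_k,\omega)/\mathcal{H}^{n-1}(I_k)$, and a covering argument at good scales to force $\mathbb{E}[f]\ge\gamma$, which closes the proof. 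Two places need more than you give them. First, in the tiling step the translates of $I_m$ filling $I_k$ are indexed by the sublattice $2m\mathbb{Z}^{n-1}$, so Birkhoff along that sublattice produces the conditional expectation with respect to the invariant $\sigma$-algebra of the subgroup action $(\tau_{2mz})_z$, not of the full action; identifying the resulting bound with your $\Phi$ (defined through $\mathcal{J}$) needs an extra device, e.g. averaging over all shifts of the tiling grid. Second, ``the leftover has small density'' does not by itself control $\mathbb{E}\bigl[\sum_{z\in\text{leftover}}\mu(I_0,\tau_z\omega)\bigr]$, because the leftover set is random; the standard fix is to arrange that the leftover consists of a deterministic boundary layer plus the sites $z$ with $\tau_z\omega\notin G_L$, whose contribution is small uniformly in $z$ since $\mathbb{E}[\mu(I_0,\cdot)\,\mathds{1}_{G_L^c}]$ is small by absolute continuity of the integral. (In the application in this paper $\mu\ge 0$, which also removes the sign issues you would otherwise have to handle by truncating $f$ from below.)

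The one genuinely misleading claim is the proposed ``alternative'' by induction on the number of parameters. Freezing coordinates and applying Kingman direction by direction only yields \emph{iterated} limits, and subadditivity gives no way to identify these with the limit along the cubes $I_k=[-k,k)^{n-1}$, which is what the theorem asserts and what Theorem \ref{mainthm2} uses; this is exactly why the multiparameter covering argument of \cite{ergodic} is not a routine consequence of the one-parameter theorem. So keep your main route (which, suitably detailed, is a correct proof) and drop the inductive shortcut, or reserve that reduction for the additive Birkhoff ingredient only.
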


Now we specify the assumptions on the random variable generating the network. 

\begin{definition}
A random variable $\mathcal{L}:\Om\rightarrow (\rn)^{\NN}$, $\w\mapsto \Lw=\{\Lw(i)\}_{i\in\NN}$ is called a stochastic lattice. We say that $\mathcal{L}$ is admissible if $\Lw$ is admissible in the sense of Definition \ref{defadmissible} and the constants $r,R$ can be chosen independent of $\w$ $\mathbb{P}$-almost surely. The stochastic lattice $\mathcal{L}$ is said to be stationary if there exists a measure preserving group action $(\tau_z)_{z\in\mathbb{Z}^n}$ on $\Om$ such that, for $\mathbb{P}$-almost every $\w\in\Om$,
\begin{equation*}
\mathcal{L}(\tau_z\omega)=\Lw+z.
\end{equation*}
If in addition $(\tau_z)_{z\in\mathbb{Z}^n}$ is ergodic, then $\mathcal{L}$ is called ergodic, too.
\end{definition}

As we are interested in proving a stochastic homogenization result, we suppose from now on that there exist functions $c_{nn},c_{lr}:\rn\rightarrow [0,+\infty]$ such that

\begin{equation}\label{periodicityassumpt}
c_{nn}^{\e}(x,y)=c_{nn}(y-x),\quad c_{lr}^{\e}(x,y)=c_{lr}(y-x).
\end{equation}

In particular this means that the coefficients stay deterministic. In the following we let $F_{\e}(\w)$ be the discrete energy defined in the previous section, with the random lattice $\Lw$ in place of $\Sigma$. Many of the notations that will follow in the present random case are the same as those used in the deterministic case in the previous section. As a general rule we will replace $\Sigma$ by $\omega$ to indicate the dependence on the random lattice $\Lw$. For instance $C_{\e}(\w)$ denotes the set defined in \eqref{CepsSig} with $\Lw$ in place of $\Sigma$. \\

The next theorem is the main result of this section.

\begin{theorem}\label{mainthm2}
Let $\mathcal{L}$ be a stationary stochastic lattice and let $c_{nn}$ and $c_{lr}$ satisfy Hypothesis 1 with the additional structure of (\ref{periodicityassumpt}). For $\mathbb{P}$-almost every $\w$ and for all $\nu\in S^{n-1}$ there exists
\begin{equation*}
\phi_{\text{hom}}(\w;\nu):=\lim_{t\to +\infty}\frac{1}{t^{n-1}}\inf\left\{F_1(\w)(u,Q_{\nu}(0,t)):\; u\in C_1^{u_{0,\nu},l_{t^{-1}}}(\w,Q_{\nu}(0,t))\right\}.
\end{equation*}
The functionals $F_{\e}(\w)$ $\Gamma$-converge with respect to the $\lud$-topology to the functional $F_{\text{hom}}(\w):\lud\rightarrow [0,+\infty]$ defined by
\begin{equation*}
F_{\text{hom}}(\w)(u)=
\begin{cases}
\int_{S(u)} \phi_{\text{hom}}(\w;\nu_u)\,\mathrm{d}\mathcal{H}^{n-1} &\mbox{if $u\in BV(D,\{\pm 1\})$,}\\
+\infty &\mbox{otherwise.}
\end{cases}
\end{equation*}
If $\mathcal{L}$ is ergodic, then $\phi_{\text{hom}}(\cdot,\nu)$ is constant almost surely. 
\end{theorem}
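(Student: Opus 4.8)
The plan is to combine the abstract $\Gamma$-convergence machinery already developed (Theorem \ref{mainthm1} and Theorem \ref{constrainedproblem}) with the subadditive ergodic theorem of Ackoglu and Krengel (Theorem \ref{ergodicthm}). The starting point is that for $\mathbb{P}$-almost every $\w$, along a subsequence $\e_{n_k}\to 0$, the functionals $F_{\e_{n_k}}(\w)$ $\Gamma$-converge to a functional of the form $\int_{S(u)}\phi_\Sigma(\w;x,\nu_u)\,\mathrm{d}\mathcal{H}^{n-1}$ with $\phi_\Sigma(\w;x_0,\nu)$ given by the cell-formula
\begin{equation*}
\phi_\Sigma(\w;x_0,\nu)=\limsup_{\rho\to 0}\frac{m_\w(u_{x_0,\nu},Q_\nu(x_0,\rho))}{\rho^{n-1}}.
\end{equation*}
The first step is to show that this limit does not depend on $x_0$ and is in fact a genuine limit (not just a $\limsup$) that equals the quantity $\phi_{\text{hom}}(\w;\nu)$ defined via the rescaled minimum problems $\frac{1}{t^{n-1}}\inf\{F_1(\w)(u,Q_\nu(0,t))\}$ with the appropriate discrete boundary datum $u_{0,\nu}$. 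This identification uses the scaling relation between $F_\e(\w)$ on a cube of side $\rho$ and $F_1(\w)$ on a cube of side $\rho/\e$, together with the convergence of boundary value problems (Theorem \ref{convofminimizers}), which guarantees that the minima of the discrete constrained problems converge to the minimum of the continuum constrained problem; this is why the slowly growing boundary layer $l_{t^{-1}}$ appears in the formula.

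The heart of the argument is to recognize the family of minimum values $m_\w(u_{0,\nu},Q_\nu(x,t))$, viewed as a set function on a suitable collection of $(n-1)$-dimensional ``rectangles'' lying in the hyperplane $\nu^\perp$, as a subadditive stochastic process in the sense of Definition \ref{discreteprocess}. Subadditivity with respect to finite partitions follows from the superadditivity of the infimum over admissible competitors combined with the almost-subadditivity estimates of the type proved in Proposition \ref{almostsub}: given a partition of a rectangle, one glues together near-optimal competitors on the pieces, pays only a controlled interface error coming from the long-range interactions (handled exactly as the $S_k^{\xi,\e}$ terms in Proposition \ref{almostsub}), and the boundary-layer thickness is chosen to absorb this error. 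Stationarity of the process, namely $\mu(I+z,\w)=\mu(I,\tau_z\w)$, is where the assumption $\mathcal{L}(\tau_z\w)=\Lw+z$ together with the translation-covariance \eqref{periodicityassumpt} of the coefficients $c_{nn},c_{lr}$ enters decisively: a configuration on $\e\Lw(\tau_z\w)$ is, after a lattice translation, a configuration on $\e\Lw+\e z$, and the energy is invariant under this operation up to the shift of the reference cube. Boundedness of the normalized expectations from below is trivial since the energies are non-negative; boundedness from above follows from the polyhedral upper bound of Proposition \ref{limitbound}. Theorem \ref{ergodicthm} then yields, for each fixed $\nu$ in a countable dense subset of $S^{n-1}$ and $\mathbb{P}$-a.e.\ $\w$, the existence of the limit $\Phi_\nu(\w)$, and one identifies $\Phi_\nu(\w)=\phi_{\text{hom}}(\w;\nu)$.

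The remaining steps are then: (i) remove the dependence on the base point $x_0$ in the cell formula by a covering/translation argument, using stationarity to average; (ii) pass from a countable dense set of directions to all $\nu\in S^{n-1}$ by proving a uniform continuity estimate for $\nu\mapsto\phi_{\text{hom}}(\w;\nu)$ (a standard consequence of the linear growth bounds, comparing optimal profiles for nearby normals on slightly enlarged cubes); (iii) conclude that the $\Gamma$-limit along the subsequence is the deterministic-in-$x$ functional $\int_{S(u)}\phi_{\text{hom}}(\w;\nu_u)\,\mathrm{d}\mathcal{H}^{n-1}$, and since this limit is independent of the subsequence, Urysohn's property gives $\Gamma$-convergence of the whole family $F_\e(\w)$. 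Finally, in the ergodic case, each $\Phi_\nu$ is a $\tau_z$-invariant function (again by stationarity), hence $\mathbb{P}$-a.s.\ constant, and by the continuity in $\nu$ and a countable-intersection argument one obtains a single full-measure set on which $\phi_{\text{hom}}(\w;\nu)$ is deterministic for every $\nu$. I expect the main obstacle to be step involving the passage from the abstract cell formula to the explicit homogenization formula with the moving boundary layer and, relatedly, proving the stationarity and subadditivity of the process in dimension $n>2$: as the authors note, this is precisely where the arguments of the Sobolev case break down and one needs the higher-dimensional analogue of the translation-invariance of the first-passage-percolation formula, i.e.\ careful bookkeeping of the long-range interface contributions when gluing competitors across the faces of the rectangles.
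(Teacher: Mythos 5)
Your overall skeleton (subsequential $\Gamma$-limit via Theorem \ref{mainthm1}, identification of the density through a subadditive stochastic process and Theorem \ref{ergodicthm}, convergence of boundary value problems, continuity in $\nu$, Urysohn, invariance in the ergodic case) is the same as the paper's, but there is a genuine gap at the decisive point, namely your step (i). After rescaling, the cell formula of Theorem \ref{fonseca} at a point $x\in D$ produces minimum problems on cubes $Q_{\nu}(x/\e_j,\rho/\e_j)$, whose centers run off to infinity \emph{proportionally to their size}, whereas the subadditive ergodic theorem only yields almost-sure convergence for cubes anchored at the origin. Your proposed ``covering/translation argument, using stationarity to average'' does not close this gap: stationarity gives equality in law (hence of expectations, or an a.s.\ statement for each \emph{fixed} translate), but for a merely stationary, non-ergodic $\mathcal{L}$ it does not give, for a.e.\ fixed $\w$, that $t^{1-n}\mu_{\nu}(\w;Q_{\nu}(tx,t))$ converges to the same limit as $t^{1-n}\mu_{\nu}(\w;Q_{\nu}(0,t))$. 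This almost-sure base-point independence is exactly Step 2 of the paper's proof, the higher-dimensional analogue of the translation invariance of the first-passage-percolation formula of \cite{BP}: one introduces the events $\mathcal{Q}_N$ on which the normalized minima are $\e$-close to $\phi_{\text{hom}}(\w;\nu)$, conditions on the $\sigma$-algebra of $\tau_x$-invariant sets, applies Fatou for conditional expectations and Birkhoff's theorem along the orbit $(\tau_{ix}\w)_i$ to find, for every large $m$, a nearby good translate $l$ with $l-m\leq Cm(\gamma+\delta)$, and then transfers the estimate back to $Q_{\nu}(-mx,m)$ by cube-inclusion and gluing. You have in fact misidentified where the Sobolev-case arguments break down: the ``translation invariance'' issue is not the bookkeeping of long-range interface terms when gluing competitors (that is the routine subadditivity estimate, handled as in Proposition \ref{almostsub}), but this conditional-expectation/Birkhoff argument, which is absent from your plan.

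A second, more technical but real, problem is that you build the subadditive process directly from the full energy with the scale-dependent boundary layer $l_{t^{-1}}$. With unbounded interaction range and a layer width that varies with the size of the piece, the competitor glued from minimizers on the subintervals need not be admissible for the larger interval (whose layer is thicker), and the cross-piece interactions beyond the layer width are controlled only by the tail of $J_{lr}$ at that width, which for small pieces is not small; so the process is not subadditive in the sense of Definition \ref{discreteprocess}, contrary to your claim that ``the boundary-layer thickness is chosen to absorb this error.'' The paper avoids this by first truncating the range to $L$ and fixing the layer width $L+r$, so that cross-piece contributions are pinned to $u_{0,\nu}$ and concentrate near the $(n-2)$-dimensional edges, where they are absorbed by the corrector $K\,P(I,\R^{n-1})$ added to the process; the truncation is removed only at the end via the uniform estimate (\ref{truncestimate}), which shows $\phi^L_{\text{hom}}\to\phi_{\text{hom}}$ and lets one pass to the boundary layer $l_{t^{-1}}$. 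Relatedly, note that Theorem \ref{constrainedproblem} (and hence Theorem \ref{convofminimizers}) requires continuity of the limit integrand in $(x,\nu)$, which for the random subsequential limit is only available \emph{after} the $x$-independence and $\nu$-continuity have been established, so invoking it at the outset to identify the cell formula with $\phi_{\text{hom}}$ is circular as stated; the paper instead reuses only the recovery-sequence construction from its proof at that stage.
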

\begin{remark}\label{stoc-coef} \rm{The result above holds with the same proof if instead of considering stochastic lattices one takes the deterministic lattice $\mathbb{Z}^n$ and random interaction coefficients $c_{i,j}^{\w}$ for all $i,j\in\mathbb{Z}^n$ such that $\mathbb{P}$-almost surely, it holds that
\begin{align*}
&\frac{1}{C}\leq c_{i,j}^{\w}\leq C \quad\quad\mbox{if $|i-j|=1$,}\\
&0\leq c_{i,j}^{\w}\leq J_{lr}(|i-j|)
\end{align*}
with $C$ and $J_{lr}$ as in Hypothesis 1. (In this setting the assumption on the long range interactions could be weakened a little bit as in \cite{AlGe14}). In this case the stochastic group action acts on the coefficients via
\begin{equation*}
c_{i+z,j+z}^{\w}=c_{i,j}^{\tau_z\w}.
\end{equation*}
In this setting an analogous result has been obtained in \cite{BP} in the case of a two-dimensional system with nearest-neighbors ergodic interactions.}
\end{remark}

\begin{proof} (of Theorem \ref{mainthm2}) We start proving the theorem for the truncated energies 

\begin{equation*}
F_{\e}^L(\w)(u,A):=F_{nn,\e}(\w)(u,A)+F_{lr,\e}^L(\w)(u,A),
\end{equation*}

where $F_{lr,\e}^L$ denotes the truncated long range energy defined in (\ref{truncenergy}). By our assumptions on the lattice, there exists a set $\Om_{\Gamma}^L\subset\Om$ of full measure on which Theorem \ref{mainthm1} holds. Then, for fixed $\w\in\Om_{\Gamma}^L$ and a given sequence $\e_j\to 0$, there exists a subsequence (not relabeled) such that

\begin{equation}\label{possiblelimit}
\Gamma\hbox{-}\lim_{j\to +\infty}F^L_{\e_j}(\w)(u,A)=\int_{S(u)\cap A} \phi^L(\w;x,\nu_u)\,\mathrm{d}\mathcal{H}^{n-1}
\end{equation}

for all $A\in\Ard$ and $u\in BV(A,\{\pm 1\})$. We will prove that, almost surely, the function $\phi^L$ is given by $\phi_{\text{hom}}^L$ for all $x\in D,\nu\in S^{n-1}$, where $\phi_{\text{hom}}^L$ is defined as $\phi_{\text{hom}}$ with the energy $F_{1}$ replaced by $F_{1}^L$ and the width of the discrete boundary reduced to $L+r$. The main part of claim then follows from the Urysohn property of $\Gamma$-convergence.\\

\textbf{Step 1} Existence of $\phi_{\text{hom}}^L$\\

Fix $L\in\mathbb{N}$. We have to show that, for $\mathbb{P}$-almost every $\w\in \Om^L_{\Gamma}$ and every $\nu\in S^{n-1}$, there exists

\begin{equation}\label{trunchom}
\phi_{\text{hom}}^L(\w;\nu)=\lim_{t\to +\infty}\frac{1}{t^{n-1}}\inf\left\{F^L_1(\w)(u,Q_{\nu}(0,t)):\; u\in C_1^{u_{0,\nu},L+r}(\w,Q_{\nu}(0,t))\right\}.
\end{equation}

To reduce notation, for $\nu\in S^{n-1}$ and a cube $Q_{\nu}(x,\rho)$ we set

\begin{equation*}
\mu^L_{\nu}(\w;Q_{\nu}(x,\rho)):=\inf\left\{F^L_1(\w)(u,Q_{\nu}(x,\rho)):\; u\in C_1^{u_{x,\nu},L+r}(\w,Q_{\nu}(x,\rho))\right\}.
\end{equation*}

\textbf{Substep 1.1} Defining a stochastic process\\

At first let $\nu\in S^{n-1}$ be a rational direction. Then there exists a matrix $A_{\nu}\in \mathbb{Q}^{n\times n}$ such that $A_{\nu}e_n=\nu$ and the set $\{A_{\nu}e_j\}_{j=1,\dots,n-1}$ forms an orthonormal basis for $\nu^{\perp}$ (see e.g. \cite{lin}). Moreover there exists an integer $M=M(\nu)>L$ such that $MA_{\nu}(z,0)\in\Zn$ for all $z\in \mathbb{Z}^{n-1}$. To $I=[a_1,b_1)\times \dots\times[a_{n-1},b_{n-1})\in\mathcal{I}$ we associate the set $I_n\subset\rn$ defined by 
\begin{equation*}
I_n:=MA_{\nu}(\text{int}\,I\times (-\frac{s_{\max}}{2},\frac{s_{\max}}{2})),
\end{equation*}
where $s_{\max}=\max_{i}|b_i-a_i|$. We define the stochastic process $\tilde{\mu}^L_{\nu}:\mathcal{I}\rightarrow L^1(\Om)$ setting

\begin{equation}\label{process}
\tilde{\mu}^L_{\nu}(I)(\w):=\inf\left\{F^L_1(\w)(v,I_n):\,v\in C_1^{u_{0,\nu},L+r}(\w,I_n) \right\}+K\,P(I,\mathbb{R}^{n-1}),
\end{equation}
where $P(I,\mathbb{R}^{n-1})$ stands for the perimeter of $I$ in $\mathbb{R}^{n-1}$ and $K$ is a constant to be chosen later.
\\
\hspace*{0,5cm}
Next we show that $\tilde{\mu}^L_{\nu}(I)$ is a $L^1(\Om)$-function. Testing the $C_1(\w)$-interpolation of $u_{0,\nu}$ as candidate in the infimum problem, one can use the growth assumptions from Hypothesis 1 and Lemma \ref{cellproperty} to show that there exists a constant $C>0$ such that

\begin{equation}\label{minestimate}
\tilde{\mu}^L_{\nu}(I)(\w)\leq C\,M^{n-1}\mathcal{H}^{n-1}(I)
\end{equation}

for all $I\in\mathcal{I}$ and every $\w\in\Om_{\Gamma}^L$ so that $\tilde{\mu}^L_{\nu}(I)$ is essentially bounded. The proof of the measurability can be found in the appendix.
\\
\hspace*{0,5cm}
We continue with proving the stationarity of the process. Let $z\in\mathbb{Z}^{n-1}$. Note that $(I-z)_n=I_n-z_M^{\nu}$, where $z_M^{\nu}:=MA_{\nu}(z,0)\in \nu^{\perp}\cap\Zn$. Moreover $v\in C_1^{u_{0,\nu},L+r}(\w,(I-z)_n)$ if and only if $u(\cdot)=v(\cdot-z_m^{\nu})\in C_1^{u_{0,\nu},L+r}(\tau_{z_M^{\nu}}\w,I_n)$. We assume without loss of generality that $r^{\prime}=\frac{1}{k}$ for some positive integer $k$. Note that if the couple $(\a,\xi)$ is taken into account in $\mu^L_{\nu}((I-z)_n)(\w)$ with the corresponding points $x_{\a},x_{\a+\xi}\in\Lw$, then the points $x_{\a^{\prime}}:=x_{\a}+z_M^{\nu}$ and $x_{\a^{\prime}+\xi^{\prime}}:=x_{\a+\xi}+z_M^{\nu}$ are points of the lattice $\mathcal{L}(\tau_{z_M^{\nu}}\w)$ and are taken into account in $\tilde{\mu}^L_{\nu}(I_n)(\tau_{z_M^{\nu}}\w)$ since we have $\a^{\prime}=\a+z_M^{\nu}$ and $\xi^{\prime}=\xi$ so that $|\xi^{\prime}|\leq L$. Here we use the fact that $\mathbb{Z}^n\subset\rzn$. This argument also works the other way around. Moreover $(x_{\a^{\prime}},x_{\a^{\prime}+\xi^{\prime}})\in \NNw$ if and only if $(x_{\a},x_{\a+\xi})\in \mathcal{NN}(\tau_{z_M^{\nu}}\w)$. This shows that $\tilde{\mu}^L_{\nu}(I-z)(\w)=\tilde{\mu}^L_{\nu}(I)(\tau_{z_M^{\nu}}\w)$. Setting $\tilde{\tau}_z=\tau_{-z_M^{\nu}}$ we obtain a measure preserving group action on $\mathbb{Z}^{n-1}$ such that

\begin{equation*}
\tilde{\mu}^L_{\nu}(I)(\tilde{\tau}_z\w)=\tilde{\mu}^L_{\nu}(I+z)(\w).
\end{equation*}

\hspace*{0,5cm}
For the subadditivity let $I\in\mathcal{I}$ and let $\{I^i\}_{i=1}^k\subset\mathcal{I}$ be disjoint such that $I=\bigcup_{i=1}^kI^i$. For fixed $i$ let $u^i\in C_1^{u_{0,\nu},L+r}(\w,I^i_n)$ be such that

\begin{equation*}
\tilde{\mu}^L_{\nu}(I^i)(\w)=F^L_1(\w)(u^i,I^i_n)+K\,P(I_i,\mathbb{R}^{n-1}).
\end{equation*}

We define $u\in C_1^{u_{0,\nu},L+r}(\w,I_n)$ by

\begin{equation*}
u(x):=
\begin{cases}
u^i(x) &\mbox{if $x\in I^i_n$ for some $i$,}\\
u_{0,\nu}(x) &\mbox{otherwise.}
\end{cases}
\end{equation*}

Note that $u$ fulfills the required boundary conditions. Moreover the set $MA_{\nu}(\text{int}\,I\times (-\frac{1}{2},\frac{1}{2}))$ is contained in $\bigcup_{i=1}^kI^i_n$ so that by definition of $u$ and the fact that $M>L+r$ we have

\begin{equation*}
F^L_1(\w)(u,I_n)=F^L_1(\w)(u,\bigcup_{i=1}^kI^i_n).
\end{equation*}

Next if $x_1\in \overline{I^{i_1}_n}$ and $x_2\in\overline{I^{i_2}_n}$ ($i_1\neq i_2)$ give a contribution to $F^L_1(\w)(u,I_n)$, then $|x_1-x_2|\leq L+r$ and therefore $\dist(x_j,\partial I^{i_j}_n)\leq L+r$. We conclude that  $u(x_j)=u_{0,\nu}(x_j)$ for $j=1,2$ so that $x_1$ and $x_2$ lie on different sides of the hyperplane $\nu^{\perp}$. Denoting by $P_{\nu}$ the projection on $\nu^{\perp}$ we know that $|P_{\nu}(x_j)-x_j|\leq L+r$ and $|P_{\nu}(x_1)-P_{\nu}(x_2)|\leq L+r$. Moreover the ray $[P_{\nu}(x_1),P_{\nu}(x_2)]$ intersects a $(n-2)$-dimensional set of the form $\overline{MA_{\nu}I^{k_1}}\cap \overline{MA_{\nu}I^{k_2}}$. It follows that 

\begin{equation*}
\dist(x_j,\bigcup_{1\leq k_1\neq k_2\leq k}\overline{MA_{\nu}I^{k_1}}\cap \overline{MA_{\nu}I^{k_2}})\leq 2(L+r)\quad j=1,2.
\end{equation*} 

We deduce that there exists a constant $C=C(L,M)$ such that 

\begin{equation*}
F^L_1(\w)(u,I_n)\leq \sum_{i=1}^kF^L_1(\w)(u^i,I^i_n)+C\sum_{1\leq i\neq j\leq k}P(\overline{I^{i}}\cap \overline{I^{j}},\mathbb{R}^{n-1}).
\end{equation*}

Having in mind that 

\begin{equation*}
P(I,\mathbb{R}^{n-1})=\sum_{i=1}^kP(I^i,\mathbb{R}^{n-1})-\sum_{1\leq i\neq j\leq k}P(\overline{I^{i}}\cap \overline{I^{j}},\mathbb{R}^{n-1}),
\end{equation*}

we get

\begin{align*}
\tilde{\mu}^L_{\nu}(I)(\w)\leq & F^L_1(\w)(u,I_n)+K\,P(I,\mathbb{R}^{n-1})\\
\leq &\sum_{i=1}^k\tilde{\mu}^L_{\nu}(I^i)(\w)+(C-K)\sum_{1\leq i\neq j\leq k}P(\overline{I^{i}}\cap \overline{I^{j}},\mathbb{R}^{n-1}),
\end{align*}

so that we obtain subadditivity if we choose $K>C$.
The inequality

\begin{equation*}
\inf\left\{\frac{1}{|I|}\int_{\Om}\tilde{\mu}^L_{\nu}(I)(w)\;\mathrm{d}\mathbb{P}(\w):\;I\in\mathcal{I}\right\} >-\infty
\end{equation*}

is trivial since the integrand is always positive. Therefore we can apply Theorem \ref{ergodicthm} to obtain almost surely the existence of $\phi^L_{\text{hom}}(\w;\nu)$ at least for rational directions $\nu$ and the sequence $t_k=2Mk$.\\

\textbf{Substep 1.2} From integer sequences to all sequences\\

Consider an arbitrary sequence $t_k\to +\infty$. From the previous step we know that

\begin{equation*}
\phi^L_{\text{hom}}(\w;\nu)=\lim_{k\to +\infty}\frac{1}{(2M[t_k])^{n-1}}{\mu}^L_{\nu}(\w;2M[t_k]Q_{\nu})
\end{equation*}

almost surely. To reduce notation we set $t_{k,1}=2Mt_k$ and $t_{k,2}=2M[t_k]$. Given a minimizer $u^k\in C_1^{u_{0,\nu},L+r}(\w,t_{k,2}Q)$ for $F^L_1(\w)(\cdot,t_{k,2}Q)$ we set

\begin{equation*}
v^k(x)=
\begin{cases}
u_{0,\nu}(x) &\mbox{if $x\in t_{k,1}Q\backslash t_{k,2}Q$ or $\dist(x,\partial t_{k,1}Q)\leq L+r$,}\\
u^k(x) &\mbox{otherwise.}
\end{cases}
\end{equation*}

We have $v^k\in C_1^{u_{0,\nu},L+r}(\w,t_{k,1}Q)$ and, due to the boundary conditions on $u^k$, also $v^k=u^k$ on $t_{k,2}Q$. Using a similar argument as in the proof of the subadditivity of the process we obtain

\begin{align*}
F^L_1(\w)(v^k,t_{k,1}Q)\leq &F^L_1(\w)(u^k,t_{k,2}Q)+C_L\mathcal{H}^{n-1}((t_{k,1}Q\backslash t_{k,2}Q)\cap\nu^{\perp})\\
&+C_L\mathcal{H}^{n-2}(\partial t_{k,2}Q\cap\nu^{\perp})
\\
\leq &{\mu}^L_{\nu}(\w;t_{k,2}Q_{\nu})+\mathcal{O}(t_{k,1}^{n-2}),
\end{align*}

which yields

\begin{equation}\label{upperbound}
\limsup_{k\to +\infty}\frac{1}{(t_{k,1})^{n-1}}{\mu}^L_{\nu}(\w;t_{k,1}Q_{\nu})\leq\phi^L_{\text{hom}}(\w;\nu).
\end{equation}

Similar we can prove that

\begin{equation}\label{lowerbound}
\phi^L_{\text{hom}}(\w;\nu)\leq\liminf_{k\to +\infty}\frac{1}{(t_{k,1})^{n-1}}{\mu}^L_{\nu}(\w;t_{k,1}Q_{\nu}).
\end{equation}

Combining (\ref{upperbound}) and (\ref{lowerbound}) we get the almost sure existence of the limit for arbitrary sequences.\\

\textbf{Substep 1.3} From rational to irrational directions\\

Let $\Om^L=\bigcap_{\nu\in S^{n-1}\cap \mathbb{Q}^n}\Om_{\nu}^L$ where $\Om_{\nu}^L$ is the set of full measure where the limit exists for the rational direction $\nu$. At first let us prove that the function $\phi^L_{\text{hom}}$ is invariant under the group action $\tau_z$. Given $z\in\mathbb{Z}^n$ and $\w\in\Om^L$ there exists $R=R(L,z)>0$ such that for all $k\in\mathbb{N}$

\begin{equation}\label{movecube}
Q_{\nu}(0,k)\subset Q_{\nu}(-z,R+k),\quad 2(L+r)\leq\dist(\partial Q_{\nu}(0,k),\partial Q_{\nu}(-z,R+k)).
\end{equation}

Similar to the stationarity of the stochastic process we have

\begin{align*}
\phi^L_{\text{hom}}(\tau_z\w;\nu)&\leq\limsup_{k\to +\infty}\frac{1}{(R+k)^{n-1}}\inf\left\{F^L_1(\w)(u,Q_{\nu}(-z,R+k)):\; u\in C_1^{u_{-z,\nu},L+r}(\w,Q_{\nu}(-z,R+k))\right\}
\\
&=\limsup_{k\to +\infty}\frac{1}{k^{n-1}}\inf\left\{F^L_1(\w)(u,Q_{\nu}(-z,R+k)):\; u\in C_1^{u_{-z,\nu},L+r}(\w,Q_{\nu}(-z,R+k))\right\}.
\end{align*}

Now given a minimizer $u_k\in C_1^{u_{0,\nu},L+r}(\w,Q_{\nu}(0,k))$ for $F_1^L(\w)(\cdot,Q_{\nu}(0,k))$ due to (\ref{movecube}) we can extend this function to a function $v_k\in C_1^{u_{-z,\nu},L+r}(\w,Q_{\nu}(-z,R+k))$ such that

\begin{equation*}
F_1^L(\w)(v_k,Q_{\nu}(-z,R+k))\leq F_1^L(\w)(u_k,Q_{\nu}(0,k))+\mathcal{O}(k^{n-2}),
\end{equation*}

hence we get $\phi^L_{\text{hom}}(\tau_z\w;\nu)\leq \phi^L_{\text{hom}}(\w;\nu)$. The other inequality can be proven similar so that the limit indeed exists and, for all $\w\in\Om^L$,

\begin{equation}\label{groupinvariant}
\phi^L_{\text{hom}}(\tau_z\w;\nu)=\phi^L_{\text{hom}}(\w;\nu).
\end{equation}

In particular this shows that $\phi^L_{\text{hom}}(\cdot\,;\nu)$ is measurable with respect to the $\sigma$-algebra $\mathcal{I}$ of invariant sets, that is

\begin{equation*}
\mathcal{J}:=\{A\in\mathcal{F}:\;\mathbb{P}(A\Delta\tau_zA)=0\quad\forall z\in\mathbb{Z}^n\}.
\end{equation*}

\hspace*{0,5cm}
We now show that the definition of $\phi^L_{\text{hom}}$ is independent of the matrix $A_{\nu}$. Indeed, if we consider another cube $\tilde{Q}_{\nu}$ obtained as before but with respect to a different $A_{\nu}$ completing $\nu$ to a different orthonormal basis and the corresponding $\mathcal{J}$-measurable limit $\tilde{\phi}^L_{\text{hom}}$, then it holds

\begin{equation}\label{invariantcharact}
\int_A\phi^L_{\text{hom}}(\w;\nu)\,\mathrm{d}\mathbb{P}(\w)=\int_A\tilde{\phi}^L_{\text{hom}}(\w;\nu)\,\mathrm{d}\mathbb{P}(\w)\quad\quad\forall A\in\mathcal{J},
\end{equation}

so that $\phi^L_{\text{hom}}=\tilde{\phi}^L_{\text{hom}}$ almost surely. Equation (\ref{invariantcharact}) can be justified as follows: Given $k_1>>k_2$ we define 

\begin{align*}
\mathcal{C}_{k_2}&:=\{MA_{\nu}((x,0)+[-\frac{k_2}{2},\frac{k_2}{2})):\;x\in k_2\mathbb{Z}^{n-1}\}
\\
C_{k_1}^{k_2}&:=\bigcup_{\substack{C\in\mathcal{C}_{k_2}\\C\subset \tilde{Q}_{\nu}(0,k_1)}}C
\end{align*}

and the function $v_{k_1}\in C_1^{u_{0,\nu},L+r}(\w,\tilde{Q}_{\nu}(0,k_1))$ 

\begin{equation*}
v_{k_1}(x)=
\begin{cases}
v_C(x) &\mbox{if $x\in C_{k_1}^{k_2}$,}
\\
u_{0,\nu}(x) &\mbox{otherwise,}
\end{cases}
\end{equation*}

where $v_C\in C_1^{u_{0,\nu},L+r}(\w,C)$ is a minimizer for $F_1^L(\w)(\cdot,C)$. Testing $v_{k_1}$ in the definition of the infimum problem defining $\tilde{\phi}^L_{\text{hom}}(\w;\nu)$ we infer that

\begin{align*}
\int_A\tilde{\phi}^L_{\text{hom}}(\w;\nu)\,\mathrm{d}\mathbb{P}(\w)&\leq \liminf_{k_1\to +\infty}\frac{1}{k_1^{n-1}}\int_A F_1^L(\w)(v_{k_1},\tilde{Q}_{\nu}(0,k_1))\,\mathrm{d}\mathbb{P}(\w)
\\
&\leq \limsup_{k_1\to +\infty}\frac{1}{k_1^{n-1}}\int_A C_L^M\left(k_1^{n-2}k_2+\left(\frac{k_1}{k_2}\right)^{n-1}k_2^{n-2}\right)+\sum_{C\subset C_{k_1}^{k_2}}\mu^L_{\nu}(\w;C)\,\mathrm{d}\mathbb{P}(\w)
\\
&\leq \frac{C_L^M}{k_2}+\frac{1}{k_2^{n-1}}\int_A \mu^L_{\nu}(\w;Q_{\nu}(0,k_2))\,\mathrm{d}\mathbb{P}(\w),
\end{align*}

where in the last inequality we have used a change of variables, combined with the stationarity of the process and the fact that $A\in\mathcal{J}$. One inequality in (\ref{invariantcharact}) now follows by letting $k_2\to+\infty$ and applying dominated convergence. The other inequality can be proven the same way.
\\
\hspace*{0,5cm}
Next note that the set of rational directions is dense in $S^{n-1}$. This follows easily from the fact that the inverse of the stereographic projection maps rational points to rational directions. Given $\nu\in S^{n-1}$ and a sequence $t_k\to +\infty$ we define

\begin{align*}
\overline{\phi}_{\text{hom}}^L(\w;\nu)&=\limsup_{k\to +\infty}\frac{1}{t_k^{n-1}}\inf\left\{F^L_1(\w)(u,Q_{\nu}(0,t_k)):\; u\in C_1^{u_{0,\nu},L+r}(\w,Q_{\nu}(0,t_k))\right\},
\\
\underline{\phi}_{\text{hom}}^L(\w;\nu)&=\liminf_{k\to +\infty}\frac{1}{t_k^{n-1}}\inf\left\{F^L_1(\w)(u,Q_{\nu}(0,t_k)):\; u\in C_1^{u_{0,\nu},L+r}(\w,Q_{\nu}(0,t_k))\right\}.
\end{align*}

Let $\nu_j\to\nu$. Since we may construct the cubes $Q_{\nu_j}(0,1)$ such that all basis vectors converge to the basis vectors of $Q_{\nu}(0,1)$ it follows that, for $\w\in\Om^L$, the functions $\overline{\phi}_{\text{hom}}^L,\underline{\phi}_{\text{hom}}^L$ are both continuous extensions of $\phi_{\text{hom}}^L$ on $S^{n-1}$. Indeed, given $\delta>0$ we find $N\in\mathbb{N}$ such that for all $j\geq N$ the following properties hold:

\begin{enumerate}
\item[(i)] $Q_{\nu_j}(0,1-\delta)\subset\subset Q_{\nu}(0,1)\subset\subset Q_{\nu_j}(0,1+\delta)$,
\item[(ii)] $\dist(\nu^{\perp}\cap B_2(0),\nu_j^{\perp}\cap B_2(0))\leq\delta.$
\end{enumerate} 

For fixed $j\geq N$ we define a test function $v_k\in C_1^{u_{0,\nu},L+r}(\w,Q_{\nu}(0,t_k)$ setting

\begin{equation*}
v_k(x):=
\begin{cases}
v_{Q_{\nu_j}(0,(1-\delta)t_k)}(x) &\mbox{if $x\in Q_{\nu_j}(0,(1-\delta)t_k)$,}
\\
u_{0,\nu}(x) &\mbox{otherwise,}
\end{cases}
\end{equation*}

where $v_{Q_{\nu_j}(0,(1-\delta)t_k)}$ is a minimizer for $\mu_{\nu_{j}}^{L}(\w,Q_{\nu_j}(0,(1-\delta)t_k))$. By the choice of $N$ we have

\begin{equation*}
\mu^L_{\nu}(\w,Q_{\nu}(0,t_k)\leq \mu^L_{\nu_j}(\w,Q_{\nu_j}(0,(1-\delta)t_k)+C_L\delta t_k^{n-1}.
\end{equation*}

Dividing the last inequality by $t_k^{n-1}$ and passing to the right subsequence of $t_k$ we deduce

\begin{equation*}
\overline{\phi}_{\text{hom}}^L(\w;\nu)\leq \overline{\phi}_{\text{hom}}^L(\w;\nu_j)(1-\delta)^{n-1}+C_L\delta.
\end{equation*}

Letting first $j\to +\infty$ and then $\delta\to 0$ yields 

\begin{equation*}
\overline{\phi}_{\text{hom}}^L(\w;\nu)\leq \liminf_{j\to +\infty}\overline{\phi}_{\text{hom}}^L(\w;\nu_j).
\end{equation*}

By a symmetric argument we can also prove upper semicontinuity of $\overline{\phi}_{\text{hom}}^L(\w;\cdot)$. The same proof also works for $\underline{\phi}_{\text{hom}}^L(\w;\cdot)$ so we get the existence of the limit for all directions $\nu$ and arbitrary sequences for all $\w\in\Om^L$. Moreover we have proved the continuity of $\nu\mapsto\phi^L_{\text{hom}}(\w;\nu)$.\\

\textbf{Step 2} Translation invariance\\

In this step we prove the existence of the limit defining ${\phi}_{\text{hom}}^L(\w;\nu)$ when we replace $Q_{\nu}(0,t)$ by $Q_{\nu}(x,t)$. We will indeed prove that this limit exists and that it agrees with the one already considered. We start considering a cube $Q_{\nu}(x,1)$ with rational direction $\nu$ and $x\in\mathbb{Z}^n\backslash\{0\}$. Given $\e>0$ we define the events

\begin{equation*}
\mathcal{Q}_N:=\left\{\w\in\Om:\;\forall k\geq \frac{N}{2}\text{ it holds }\left|\frac{1}{k^{n-1}}\mu^L_{\nu}(\w;Q_{\nu}(0,k))-\phi^L_{\text{hom}}(\w;\nu)\right|\leq\e\right\}.
\end{equation*}

By Step 1 we know that the function $\mathds{1}_{\mathcal{Q}_N}$ converges almost surely to $\mathds{1}_{\Om}$. Let us denote by $\mathcal{J}_x$ the $\sigma$-algebra of invariant sets for the measure preserving map $\tau_x$. Fatou's lemma for the conditional expectation yields

\begin{equation}\label{condexp}
\mathds{1}_{\Om}=\mathbb{E}[\mathds{1}_{\Om}|\mathcal{J}_x]\leq\liminf_{N\to +\infty}\mathbb{E}[\mathds{1}_{\mathcal{Q}_N}|\mathcal{J}_x].
\end{equation} 

Using (\ref{condexp}) we know that, given $\delta>0$, we find $N_0=N_0(\w,\delta)$ such that

\begin{equation*}
1\geq\mathbb{E}[\mathds{1}_{\mathcal{Q}_{N_0}}|\mathcal{J}_x](\w)\geq 1-\delta.
\end{equation*} 

Due to Birkhoff's ergodic theorem, almost surely, for every $\gamma>0$ there exists $m_0=m_0(\w,\gamma)$ such that, for any $m\geq \frac{1}{2}m_0$,

\begin{equation*}
\left|\frac{1}{m}\sum_{i=1}^m\mathds{1}_{\mathcal{Q}_{N_0}}(\tau_{i x}\w)-\mathbb{E}[\mathds{1}_{\mathcal{Q}_{N_0}}|\mathcal{J}_x](\w)\right|\leq\gamma.
\end{equation*}

Note that the set we exclude will be a countable union of null sets.
\\
\hspace*{0,5cm}
For fixed $m\geq\max\{m_0(\w,\gamma),N_0(\w,\delta)\}$ we denote by $R$ the maximal integer such that for all $i=m+1,\dots,m+R$ we have $\tau_{ix}(\w)\notin \mathcal{Q}_{N_0}$. In order to bound $R$ let $\tilde{m}$ be the number of unities in the sequence $\{\mathds{1}_{\mathcal{Q}_{N_0}}(\tau_{ix}(\w))\}_{i=1}^{m}$. By definition of $R$ we have

\begin{equation*}
\gamma\geq\left|\frac{\tilde{m}}{m+R}-\mathbb{E}[1_{\mathcal{Q}_{N_0}}|\mathcal{J}_x](\w)\right|=\left|1-\mathbb{E}[1_{\mathcal{Q}_{N_0}}|\mathcal{J}_x](\w)+\frac{\tilde{m}-m-R}{m+R}\right|\geq \frac{R+m-\tilde{m}}{m+R}-\delta.
\end{equation*}

Since $m-\tilde{m}\geq 0$ and without loss of generality $\gamma+\delta\leq\frac{1}{2}$ this provides an upper bound by $R\leq 2m(\gamma+\delta)$. 
\\
\hspace*{0,5cm}
So if we choose an arbitrary $m\geq\max\{m_0(\w,\gamma),N_0(\w,\delta)\}$ and $\tilde{R}=3m(\gamma+\delta)$ we find $l\in [m+1,m+\tilde{R}]$ such that $\tau_{lx}(\w)\in \mathcal{Q}_{N_0}$. Then by (\ref{groupinvariant}) we have for all $k\geq \frac{N_0}{2}$ that

\begin{equation}\label{epsestimate}
\left|\frac{1}{k^{n-1}}\mu^L_{\nu}(\w;Q_{\nu}(-lx,k))-\phi^L_{\text{hom}}(\w;\nu)\right|\leq\e.
\end{equation}

We define $\tilde{l}_1=m+2C_L|x|^2(l-m)$, where $C_L\in\mathbb{N}$ is a constant to be chosen such that $Q_{\nu}(-mx,m)\subset Q_{\nu}(-lx,\tilde{l}_1)$ and $2(L+r)\leq\dist(\partial Q_{\nu}(-mx,m),\partial Q_{\nu}(-lx,\tilde{l}_1))$ (note that $l-m\geq 1$). Then each face of the cube $Q_{\nu}(-mx,m)$ has at most the distance $\tilde{l}_1-m\leq 2C_L|x|^2(l-m)$ to the corresponding face in $Q_{\nu}(-lx,\tilde{l}_1)$. Therefore, given a minimizer $u_m$ on $Q_{\nu}(-mx,m)$ we can again extend $u_m$ to a function $v_m$ on $Q_{\nu}(-lx,\tilde{l}_1)$ such that it fulfills the correct boundary conditions and 

\begin{equation*}
F_1^L(\w)(v_m,Q_{\nu}(-lx,\tilde{l}_1))\leq \mu^L_{\nu}(\w;Q_{\nu}(-mx,m))+C^{\prime}_{L,x}\tilde{R}\,(\tilde{l}_1)^{n-2}.
\end{equation*}

Dividing the last inequality by $(\tilde{l}_1)^{n-1}$ we deduce

\begin{align}
\frac{\mu^L_{\nu}(\w;Q_{\nu}(-lx,\tilde{l}_1))}{(\tilde{l}_1)^{n-1}}&\leq \frac{\mu^L_{\nu}(\w;Q_{\nu}(-mx,m))}{(\tilde{l}_1)^{n-1}}+C^{\prime}_{L,x}\frac{\tilde{R}}{\tilde{l}_1}\nonumber
\\
&\leq \frac{\mu^L_{\nu}(\w;Q_{\nu}(-mx,m))}{m^{n-1}}+3C^{\prime}_{L,x}(\gamma+\delta).\label{firstergodicestimate}
\end{align}

On the other hand we can define $\tilde{l}_2=m-2C_L|x|^2(l-m)$ and use the same argument to obtain

\begin{equation}\label{secondergodicestimate}
\frac{\mu^L_{\nu}(\w;Q_{\nu}(-mx,m))}{m^{n-1}}\leq\frac{\mu^L_{\nu}(\w;Q_{\nu}(-lx,\tilde{l}_2))}{\tilde{l}_2^{n-1}}+3C^{\prime}_{L,x}(\gamma+\delta). 
\end{equation}

Now if $\gamma+\delta$ is small enough (depending only on $x,L$) we have $\tilde{l}_1\geq\tilde{l}_2\geq\frac{m}{2}\geq\frac{N_0}{2}$. Combining (\ref{firstergodicestimate}),(\ref{secondergodicestimate}) and (\ref{epsestimate}) we infer

\begin{equation*}
\limsup_{m\to +\infty}\left|\frac{\mu^L_{\nu}(\w;Q_{\nu}(-mx,m))}{m^{n-1}}-\phi^L_{\text{hom}}(\w;\nu)\right|\leq 3C^{\prime}_{L,x}(\gamma+\delta)+\e,
\end{equation*}

which yields the claim for all integer vectors and all rational directions. Note that the argument also holds if we consider a cube $Q_{\nu}(x,\rho)$ with $\rho\in\mathbb{Q}$ where the constants may also depend on $\rho$ (taking out another null set).\\ 

The extension of this result to the general case is straightforward. Here we make only few comments and leave the details to the reader. 
The extension to arbitrary sequences (and thus to rational points) works similar as for the case of cubes centered at the origin except that now the cubes to be compared are not contained in each other (this is a minor detail that can be fixed by the same arguments already used in the proof of the invariance under the group action). For irrational points and irrational directions $\nu\in S^{n-1}$ the statement follows by continuity arguments.\\

\textbf{Step 3 }$\Gamma$-convergence for the truncated energies\\

So far we have shown that for every $L\in\mathbb{N}$ there exists a set $\Om^L$ of full measure such that for all $\nu\in S^{n-1}$ there exists

\begin{equation}\label{theclue2}
\phi^L_{\text{hom}}(\w;\nu)=\lim_{t\to +\infty}\frac{1}{t^{n-1}}\inf\left\{F_1^L(\w)(v,Q_{\nu}(tx,t)):\;v\in C_1^{u_{tx,\nu},L+r}(\w,Q_{\nu}(tx,t))\right\}
\end{equation}

for all $x\in D$ and all $\w\in\Om^L$. As we show now, this function is the surface density of every possible $\Gamma$-limit of the truncated energies. \\

We start with the lower bound. Fix $\w\in \Om^L$. Using Theorem \ref{fonseca}, for every $\eta>0,x\in D$ and $\nu\in S^{n-1}$ we find $\rho=\rho_{\eta}>0$ and a function $u_{\eta}\in BV(Q_{\nu}(x,\rho),\{\pm 1\})$ such that $u_{\eta}=u_{x,\nu}$ in a neighbourhood of $\partial Q_{\nu}(x,\rho)$ and

\begin{equation*}
\phi^L(\w;x,\nu)+\eta\geq\frac{1}{\rho^{n-1}}\int_{S(u_{\eta})\cap Q_{\nu}(x,\rho)}\phi^L(\w;y,\nu_{u_{\eta}})\,\mathrm{d}\mathcal{H}^{n-1}.
\end{equation*}

Now from the construction of the recovery sequence in the proof of Theorem \ref{constrainedproblem} we know that we can take a recovery sequence for $u_{\eta}$ already fulfilling the discrete boundary conditions. Let $u_{\eta,j}$ be such a recovery sequence. From the last inequality we infer that

\begin{align*}
\phi^L(\w;x,\nu)+\eta&\geq\frac{1}{\rho^{n-1}}\lim_{j\to +\infty}F_{\e_j}(\w)(u_{\eta,j},Q_{\nu}(x,\rho))
\\
&\geq \liminf_{j\to +\infty}\frac{1}{\rho^{n-1}}\inf\{F_{\e_j}(\w)(v,Q_{\nu}(x,\rho)):\;v\in C_{\e_j}^{u_{x,\nu},L+r}(\w,Q_{\nu}(x,\rho))\}=\phi^L_{\text{hom}}(\w;\nu),
\end{align*}

where the last equality follows by a simple rescaling of the energies. Since $\eta>0$ was arbitrary, we proved the lower bound.\\

For the upper bound, note that by the continuity of $\nu\mapsto\phi^L_{\text{hom}}(\w;\nu)$ it is enough to prove it for polyhedral functions. Moreover, since the construction we provide is local we can assume that $u\in BV(D,\{\pm 1\})$ with $S(u)\cap D=(x+\nu^{\perp})\cap D$ for some $x\in D$ and $\nu\in S^{n-1}$. Again let $\eta>0$. We cover $S(u)\cap D$ with cubes $\{Q_{\nu}(x_i,\eta)\}_{i=1}^{M_{\eta}}$, where $x_i\in S(u)\cap D$ and $M_{\eta}=\eta^{1-n}\left(\mathcal{H}^{n-1}(S(u)\cap \overline{D})+{ \scriptstyle \mathcal{O}} (1)\right)$. Now we define the sequence $u_{\eta,j}\in C_{\e_j}(\w)$ as

\begin{equation*}
u_{\eta,j}=
\begin{cases}
u(\e_j x) &\mbox{if $x\notin \bigcup Q_{\nu}(x_i,\eta)$,}\\
u_j^i(\e_j x) &\mbox{if $x\in Q_{\nu}(x_i,\eta)$,}
\end{cases}
\end{equation*}

where $u_j^i$ is the rescaled solution of the problem

\begin{equation*}
\inf\left\{F_1^L(\w)(v,Q_{\nu}(\frac{x}{\e_j},\frac{\eta}{\e_j})):\;v\in C_1^{u_{\frac{x}{\e_j},\nu},L+r}(\w,Q_{\nu}(\frac{x}{\e_j},\frac{\rho}{\e_j}))\right\},
\end{equation*}

and $\nu$ is orientated in the obvious way. By the previous steps we have that the contribution on each cube $Q_{\nu}(x_i,\eta)$ converges to $\eta^{n-1}\phi^L_{\text{hom}}(\w;\nu)$. It remains to control the interactions between different cubes. Since on each cube the functions fulfill boundary conditions on a discrete boundary whose width is larger than the range of interactions, interacting points in different cubes are bound to be in a set whose measure scales like $\e^n$ and therefore their contribution to the energy vanishes in the limit. Moreover we have no contributions from outside the cubes. This follows again by the width of the discrete boundary. By (\ref{minestimate}) it follows that the sequence has bounded energy and arguing similar to the proof of Proposition \ref{limitcoercivity} there exists a subsequence $\e_{j_k}$ such that $u_{\eta,j_k}$ converges to some $u_{\eta}$. Taking a countable set $\eta_l\to 0$, by a diagonal argument this holds for all $\eta_l$. We conclude that

\begin{align*}
\Gamma-\lim_j F_{\e_j}(\w)(u_{\eta_l})=\Gamma-\lim_k F_{\e_{j_k}}(\w)(u_{\eta_l})\leq \limsup_{k}F_{\e_{j_k}}(\w)(u_{\eta_l,j_k})\leq M_{\eta_l}\eta_l^{n-1}\phi^L_{\text{hom}}(\w;\nu).
\end{align*}

Now obviously $u_{\eta}$ converges to $u$ in $L^1(D)$ when $\eta\to 0$. By lower semicontinuity we deduce that

\begin{equation*}
\Gamma-\lim_j F_{\e_j}(\w)(u)\leq \phi^L_{\text{hom}}(\w;\nu)\mathcal{H}^{n-1}(S(u)\cap\overline{D}),
\end{equation*}

which yields the result since $u$ is a polyhedral function and therefore $\mathcal{H}^{n-1}(S(u)\cap \partial D)=0$.\\

\textbf{Step 4} Eliminating the truncation\\

We have shown that the truncated energies possess a $\Gamma$-limit with a surface density independent of $x$. Therefore Theorems \ref{constrainedproblem} and \ref{convofminimizers} hold. Using Remark \ref{remarkboundary} we can replace the width of the boundary by some sequence $l_{\e}$ fulfilling (\ref{slowboundary}) in the definition of $\phi^L_{\text{hom}}(w;\nu)$. Having this in mind we now show that we can define $\phi_{\text{hom}}(\w;\nu)$ as the pointwise limit of the functions $\phi^L_{\text{hom}}(\w;\nu)$ on the set $\Om_{\Gamma}=\bigcap_L \Om^L$.
\\
\hspace*{0,5cm}
Indeed let $t_k\to+\infty,\,\w\in\Om^{\Gamma}$ and let $Q=Q_{\nu}(x,\rho)$ be a cube. We set $l_k=l_{t_k^{-1}}$ and
\begin{align*}
&\mu_k(\w):=\inf\left\{F_1(\w)(v,t_kQ):\;v\in C_1^{u_{t_kx,\nu},l_k}(\w,t_kQ)\right\},\\
&\mu_k^L(\w):=\inf\left\{F_1^L(\w)(v,t_kQ):\;v\in C_1^{u_{t_kx,\nu},l_k}(\w,t_kQ)\right\}.
\end{align*}

Let $v^L_k\in C_1^{u_{t_kx,\nu},l_k}(\w,t_kQ)$ be such that

\begin{equation*}
F_1^L(\w)(v^L_k,t_kQ)=\mu_k^L(\w).
\end{equation*}

We have
\begin{align*}
0&\leq\frac{\mu_k(\w)-\mu_k^L(\w)}{t_k^{n-1}}\leq\frac{F_1(\w)(v^L_k,t_kQ)-F_1^L(\w)(v^L_k,t_kQ)}{t_k^{n-1}}\\
&\leq\frac{1}{t_k^{n-1}}\left(\sum_{|\xi|>L}J_{lr}(|\hat{\xi}|)\sum_{\alpha\in R^{\xi}_{lr,1}(t_kQ)}|v^L_k(x_{\alpha})-v^L_k(x_{\alpha+\xi})|\right).
\end{align*}

We can bound the last expression by

\begin{equation*}
\frac{C\,\mathcal{H}^{n-1}(S(v^L_k)\cap t_kQ)}{t_k^{n-1}}\sum_{|\xi|>L}J_{lr}(|\hat{\xi}|)(|\xi|+2R).
\end{equation*}

Using the fixed values of $v^L_k$ near the boundary (we may assume that $l_k>2R$) and the same argument as in the proof of Proposition \ref{limitcoercivity}, one can show that 

\begin{equation*}
\mathcal{H}^{n-1}(S(v^L_k)\cap t_kQ)\leq C\,F_1^L(\w)(v^L_k,t_kQ)+C\,t_k^{n-1},
\end{equation*}

so that, taking into account (\ref{minestimate}) we infer that

\begin{equation}\label{truncestimate}
0\leq\frac{\mu_k(\w)-\mu_k^L(\w)}{t_k^{n-1}}\leq C\sum_{|\xi|>L}J_{lr}(|\hat{\xi}|)(|\xi|+2R).
\end{equation}

This shows that $\{\phi^L_{\text{hom}}(\w;\nu)\}_L$ is a Cauchy sequence because $\frac{\mu_k^L(\w)}{t_k^{n-1}}$ converges to $\frac{\mu_k(\w)}{t_k^{n-1}}$ uniformly in $k$. Defining $\phi_{\text{hom}}(\w;\nu):=\lim_{L\to +\infty}\phi^L_{\text{hom}}(\w;\nu)$ it follows easily from (\ref{truncestimate}) that 

\begin{equation*}
\phi_{\text{hom}}(\w;\nu)=\lim_{k\to +\infty}\frac{\mu_k(\w)}{t_k^{n-1}}.
\end{equation*}

Since $\phi_{\text{hom}}(\w;\cdot)$ is the limit of surface integrands of $\Gamma$-limits, it is convex (in the sense of one-homogeneous extensions) and bounded, hence continuous and we may proceed as in Step 3 to show that the surface density of every possible $\Gamma$-limit is given by $\phi_{\text{hom}}(\w;\nu)$. The proof of the lower bound remains unchanged while for the recovery sequence we split the contributions between two different cubes to finite range interactions within distance smaller than $M$ and the long range interactions that can be bound by $C\sum_{|\xi|\geq M}J_{lr}(|\hat{\xi}|)(|\xi|+2R)\mathcal{H}^{n-1}(\partial Q_{\nu}(x_i,\eta))$. Since the width of the boundary fulfills (\ref{slowboundary}) we can apply the argument from the previous step for fixed, but arbitrarily large $M$. We let first $j\to +\infty$ and then $M\to +\infty$. From then on we proceed as in the case of finite range interactions.\\

The claim of the theorem regarding the ergodic case follows easily from (\ref{groupinvariant}) since in this case all the functions $\phi^L_{\text{hom}}(\cdot\,;\nu)$ are constant and so is the pointwise limit.\end{proof}

\section{Examples and generalizations}\label{sect:last}
In this section, motivated by the applications, we provide two examples of stochastic lattices that are admissible in our setting. In the last paragraph we generalize the results obtained so far for pairwise interaction models to multi-body interactions.

\subsection{Stochastically mixing perturbations of periodic lattices with defects}

We may model a magnetic crystal as a discrete spin system parameterized on a periodic multi-lattice\ie the union of finitely many translated copies of $\Zn$. These lattices include for example the well-known fcc, hcp or bcc lattices in dimension $n=3$ or the triangular lattice in dimension $n=2$. We allow these crystalline structures to have defects in the periodic configuration and we assume that the number of defects is small with respect to our relevant scaling, that is surface scaling. For small temperature we expect small random fluctuations of the position of the atoms in the network to take place. The way we model such fluctuations as small stochastic perturbations of the crystalline lattice is detailed in the following construction.\\

Given a fixed periodic lattice, we need to construct probability space that would generate the stochastic lattice. We make this construction explicitly in the case of $\Zn$. Let $\mu$ be a probability measure on $[-a,a]^n$, where $|a|<\frac{1}{2}$. We define $\Om=\Pi_{i\in\Zn}[-a,a]^n$ and let $(\mathcal{F},\mathbb{P})$ be the product $\sigma$-algebra with the product measure. As a group action we take the shift operator. This action is measure preserving and strongly mixing on cylindrical sets and therefore on all sets, whence ergodic. We define the stochastic lattice as
\begin{equation*}
\mathcal{L}(\w)_i=i+\w_i.
\end{equation*}
The bound on $|a|$ ensures that $\Lw$ is admissible uniformly in $\w$ almost surely. Moreover, by the construction we have $\Lw+z=\mathcal{L}(\tau_z\w)$ at least as a point set, so that Theorem \ref{mainthm2} can be applied. As the construction done in the case of $\Zn$ can be repeated for any periodic multi-lattice $\mathcal{L}_{p}$ (with some obvious modification on the bound on $|a|$), in what follows we state the results for any multi-lattice.\\

We now model the possible presence of defects on the periodic structure considering an admissible lattice $\mathcal{L}$ (the lattice with defects) with the property that there exists a periodic multi-lattice $\mathcal{L}_p$ (a lattice without defects) such that

\begin{equation}\label{almostperiodic}
\lim_{N\to +\infty}\frac{\#\left(\mathcal{L}\Delta\mathcal{L}_p\right)\cap B_N(0)}{N^{n-1}}=0.
\end{equation}
Note that here the number of defects is scaled to zero with respect to the surface scaling, which is the relevant scaling of our problem. \\

In the next proposition we show that the stochastic perturbation of the lattice $\mathcal L$ yields the same asymptotic energy as that of the stochastically perturbed multi-lattice $\mathcal{L}_{p}$.
\begin{proposition}\label{changelattice}
Let $\Lw$ fulfill (\ref{almostperiodic}) with corresponding lattice $\Lw_p$. Then, for every $u\in BV(D,\{\pm 1\})$ and every cube $Q$, we have 
\begin{equation*}
F(\w)(u,Q)=F(\w)_p(u,Q),
\end{equation*}
where $F(\w)_p$ denotes the limit energy with respect to the lattice $\Lw_p$ (we assume it exists). 
\end{proposition}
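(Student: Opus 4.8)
The plan is to establish $F(\w)(u,Q)\le F(\w)_p(u,Q)$ together with the reverse inequality by a \emph{defect surgery}: transferring competitors (in particular recovery sequences) between the two lattices at a cost that is negligible on the surface scale. Since (\ref{almostperiodic}) only involves the symmetric difference and $\mathcal{L}_p(\w)$ is again an admissible stochastic lattice, the two roles are interchangeable, so it is enough to show that any sequence $v_\e$ converging to $u$ in $L^1(Q)$ and belonging to the cell space of one of the two lattices can be replaced by a sequence $\tilde v_\e$ belonging to the cell space of the other lattice, still converging to $u$ in $L^1(Q)$ and with $F_\e(\w)(\tilde v_\e,Q)\le F_\e(\w)_p(v_\e,Q)+o(1)$ (and symmetrically). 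Feeding this into the definitions of $\Gamma\hbox{-}\liminf$ and $\Gamma\hbox{-}\limsup$ then gives $\Gamma\hbox{-}\limsup_\e F_\e(\w)(u,Q)\le F(\w)_p(u,Q)\le\Gamma\hbox{-}\liminf_\e F_\e(\w)(u,Q)$, i.e.\ the $\Gamma$-limit $F(\w)(u,Q)$ exists and equals $F(\w)_p(u,Q)$.

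Set $D_\e:=\big(\mathcal{L}(\w)\Delta\mathcal{L}(\w)_p\big)\cap\e^{-1}\overline{Q}$. By (\ref{almostperiodic}) and the uniform bound on the perturbations, the two perturbed lattices still differ only on a set of vanishing surface density, and since $\overline{Q}$ is bounded we get $\e^{\,n-1}\#D_\e\to 0$ as $\e\to 0$. Given $v_\e$ in the cell space of $\mathcal{L}(\w)_p$, define $\tilde v_\e\in C_\e(\w)$ by $\tilde v_\e(\e x):=v_\e(\e x)$ for $x\in\mathcal{L}(\w)\cap\mathcal{L}(\w)_p$ and $\tilde v_\e(\e x):=1$ otherwise. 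By Lemma \ref{cellproperty}(i), a lattice point at distance larger than $2R$ from every point of $D_\e$ has the same Voronoi cell with respect to both lattices, so $\tilde v_\e$ and $v_\e$ agree as $L^1$-functions off $\e$ times the union of those cells (for either tessellation) that meet the $2R$-neighbourhood of $D_\e$. By an elementary packing estimate there are at most $C\#D_\e$ such cells, each of volume at most $\omega_nR^n$, hence $\|\tilde v_\e-v_\e\|_{L^1(Q)}\le C\e^{\,n}\#D_\e=C\e\,(\e^{\,n-1}\#D_\e)\to 0$; in particular $v_\e\to u$ in $L^1(Q)$ forces $\tilde v_\e\to u$ in $L^1(Q)$.

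For the energy, I would split the interaction pairs contributing to $F_\e(\w)(\tilde v_\e,Q)$ into (a) pairs with both endpoints in $\mathcal{L}(\w)\cap\mathcal{L}(\w)_p$ and at distance larger than $2R$ from $D_\e$, and (b) all remaining pairs. For the pairs in (a) the Voronoi cells, and hence the nearest-neighbour relation, coincide with those of $\mathcal{L}(\w)_p$, and $\tilde v_\e=v_\e$ at these points, so these terms sum to exactly the corresponding terms of $F_\e(\w)_p(v_\e,Q)$. Every pair in (b) has an endpoint among the at most $C\#D_\e$ lattice points lying within distance $2R$ of $D_\e$; by Lemma \ref{cellproperty}(ii) each such point has at most $M_1$ nearest neighbours, and by Hypothesis 1 together with the $r$-separation its total long-range interaction weight is bounded by $\sum_{\xi\in\rzn}J_{lr}(|\hat\xi|)<+\infty$. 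Since $|\tilde v_\e(\e x)-\tilde v_\e(\e y)|\le 2$ always, the contribution of (b) is at most $C\e^{\,n-1}\#D_\e=o(1)$. Therefore $F_\e(\w)(\tilde v_\e,Q)\le F_\e(\w)_p(v_\e,Q)+o(1)$, and the same argument with the two lattices exchanged gives the reverse transfer. Applying the first transfer to a recovery sequence for $F(\w)_p(u,Q)$ and the second to an arbitrary sequence realizing the $\Gamma\hbox{-}\liminf$ of $F_\e(\w)(\cdot,Q)$ at $u$ yields the two inequalities announced above.

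The one point that really requires care is the uniform per-point energy bound combined with the stability of the Voronoi tessellation away from $D_\e$ — that is, making precise that modifying a lattice point affects only boundedly many cells and only a bounded amount of energy — but this is exactly what Lemma \ref{cellproperty} and the integrability assumption on $J_{lr}$ provide. Everything else is routine bookkeeping of the surface prefactor $\e^{\,n-1}$ against the sublinear bound $\#D_\e=o(\e^{\,1-n})$ coming from (\ref{almostperiodic}).
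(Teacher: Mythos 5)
Your argument is correct, and its skeleton is the one the paper uses: transfer a competitor to the other lattice by keeping its values at common points and setting $+1$ at the new ones, then pay only for interactions meeting a bounded neighbourhood of $\Lw\Delta\Lw_p$, whose cardinality is $o(\e^{1-n})$ by (\ref{almostperiodic}). Where you genuinely diverge is in how the error is controlled. The paper goes through the circumcenter characterization (\ref{charann}) to show that pairs whose nearest-neighbour status changes lie in $(\Lw\Delta\Lw_p)^{2R}$, respectively $(\Lw\Delta\Lw_p)^{M_{\delta}+2R}$, truncates the long-range part at a scale $M_{\delta}$, and controls the tail $|\xi|>M_{\delta}$ via the equiboundedness of $\mathcal{H}^{n-1}(S(u_{\e,p}))$ (convexity of the cube plus the coercivity argument of Proposition \ref{limitcoercivity}); this is what forces the reduction to $Q^{\prime}\subset\subset Q$ and the inner-regularity step at the end. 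You instead rely on the cleaner geometric fact that, both lattices being admissible, the Voronoi cell of a point is determined by the lattice points within distance $2R$ of it, so for pairs of common points far from the symmetric difference both the nearest-neighbour relation and the coefficients are literally unchanged, while every remaining pair has an endpoint among $O(\#D_{\e})$ bad points whose total per-point interaction weight is bounded by $CM_1+\sum_{\xi\in\rzn}J_{lr}(|\hat{\xi}|)<+\infty$. This dispenses with the truncation at $M_{\delta}$, with the jump-set bound on the recovery sequence, and with the inner-regularity reduction; moreover, since you transfer in both directions, you obtain the existence of the $\Gamma$-limit of $F_{\e}(\w)(\cdot,Q)$ at $u$ together with the identity, rather than an inequality between limits assumed to exist. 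Two small repairs: (1) define $D_{\e}:=(\Lw\Delta\Lw_p)\cap(\e^{-1}\overline{Q})^{2R}$ (or work with the full symmetric difference), since with your definition a common point close to $\partial(\e^{-1}Q)$ may be far from $D_{\e}$ and yet lie within $2R$ of a defect just outside $\e^{-1}\overline{Q}$, in which case its cell, hence its nearest-neighbour status and coefficient, can still change; as (\ref{almostperiodic}) is a bound on balls $B_N(0)$, the enlarged set still satisfies $\e^{n-1}\#D_{\e}\to 0$. (2) If the admissibility constants of $\Lw$ and $\Lw_p$ differ, the radius $2R$ in the cell-stability claim must be taken with $R$ the larger of the two constants.
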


\begin{proof}
We only show that $F(\w)(u,Q)\leq F(\w)_p(u,Q)$, the other inequality being exactly the same. Using the inner regularity of the limit functional, it is enough to show that $F(\w)(u,Q^{\prime})\leq F(\w)_p(u,Q)$ for every cube $Q^{\prime}\subset\subset Q$.
\\
\hspace*{0,5cm}
Given $\delta>0$ there exists $M_{\delta}>0$ such that $\sum_{|\xi|>M_{\delta}}J_{lr}(|\hat{\xi}|)(|\xi|+2R)\leq \delta$. Next we have to check how many interacting points change from nearest neighbours to long-range or vice versa, or vanish when changing the lattice. Note that $(x_{\a},x_{\a+\xi})\in \NNw$ if and only if

\begin{equation}\label{charann}
\exists z\in\rn :\;|x_{\a}-z|=|x_{\a+\xi}-z|<|l-z|\quad\forall l\in\Lw\backslash\{x_{\a},x_{\a+\xi}\}.
\end{equation} 
We have to distinguish several cases: If $(x_{\a},x_{\a+\xi})\in \NNw$ then there are three possibilities:

\begin{itemize}
\item [(i)] $(x_{\a},x_{\a+\xi})\in \NNw_p$,
\item [(ii)] $x_{\a}\notin\Lw_p$ or $x_{\a+\xi}\notin\Lw_p$,
\item [(iii)] $(x_{\a},x_{\a+\xi})\notin \NNw_p$, $\{x_{\a},x_{\a+\xi}\}\subset \Lw_p$,
\end{itemize}
where $\NNw_p$ means the nearest neighbours in the lattice $\Lw_p$. In the last two cases one can deduce that

\begin{equation}\label{newlattice1}
\{x_{\a},x_{\a+\xi}\}\subset\left(\Lw\Delta\Lw_p\right)^{2R}.
\end{equation}
In the second case this is trivial, in the third case one uses (\ref{charann}) to see that there exists $z\in\rn$ and $l_p\in\Lw_p\backslash \Lw$ such that
\begin{align*}
|x_{\a}-z|&=|x_{\a+\xi}-z|<|l-z|\quad\forall l\in\Lw\backslash\{x_{\a},x_{\a+\xi}\},\\
|x_{\a}-z|&=|x_{\a+\xi}-z|\geq |l_p-z|,
\end{align*}
from which we deduce that
\begin{align*}
|l_p-x_{\a}|&\leq |l_p-z|+|x_{\a}-z|\leq 2|x_{\a}-z|\leq 2R,\\
|l_p-x_{\a+\xi}|&\leq |l_p-z|+|x_{\a+\xi}-z|\leq 2|x_{\a+\xi}-z|\leq 2R.
\end{align*}
where we have used that $z\in C(x)\cap C(y)$.
\\
\hspace*{0,5cm}
If $(x_{\a},x_{\a+\xi})\notin NN(\w)$ we have again the three possibilities from above. In the two interesting cases one can use a similar argument as before to show that (for $M_{\delta}$ large enough, depending on $\Lw_p$)
\begin{equation}\label{newlattice2}
\{x_{\a},x_{\a+\xi}\}\subset\left(\Lw\Delta\Lw_p\right)^{M_{\delta}+2R} \text{     or     }|\xi|>M_{\delta}.
\end{equation}

Now let $u_{\e,p}$ converge to $u$ in $\lud$ such that

\begin{equation*}
\lim_{\e\to 0}F_{\e}(\w)_p(u_{\e},Q)=F(\w)_p(u,Q).
\end{equation*}
We define $u_{\e}\in C_{\e}(\w)$ by

\begin{equation*}
u_{\e}(\e x)=
\begin{cases}
u_{\e,p}(\e x) &\mbox{if $\e x\in\e\Lw_p$,}\\
+1 &\mbox{otherwise.}
\end{cases}
\end{equation*}

It follows easily that also $u_{\e}\to u$ in $\lud$. Using (\ref{newlattice1}), (\ref{newlattice2}) and Hypothesis 1 we get

\begin{align*}
F_{\e}(\w)(u_{\e},Q^{\prime})\leq& F_{\e}(\w)_p(u_{\e,p},Q)+C\,\e^{n-1}\#\{(x,y)\in \left(\Lw\Delta\Lw_p\right)^{M_{\delta}+2R}\cap \frac{1}{\e}Q\}
\\
&+\sum_{|\xi|>M_{\delta}}J_{lr}(|\hat{\xi}|)\sum_{\a\in R_{lr,\e}^{\xi}(Q^{\prime})}\e^{n-1}|u_{\e,p}(\e x_{\a})-u_{\e,p}(\e x_{\a+\xi})|.
\end{align*}
Using Lemma \ref{cellproperty} and (\ref{almostperiodic}) one can easily see that the second term vanishes when $\e\to 0$. To estimate the last term note that, since $Q^{\prime}$ is convex, $u_{\e,p}(\e x_{\a})\neq u_{\e,p}(\e x_{\a+\xi})$ implies that $[\e x_{\a},\e x_{\a+\xi}]\cap S(u_{\e,p})\cap Q^{\prime}\neq\emptyset$. This yields

\begin{equation}\label{nearjumpset}
\sum_{\a\in R_{lr,\e}^{\xi}(Q^{\prime})}\e^{n-1}|u_{\e,p}(\e x_{\a})-u_{\e,p}(\e x_{\a+\xi})|\leq C\,\mathcal{H}^{n-1}(S(u_{\e,p})\cap Q^{\prime})(|\xi|+2R),
\end{equation} 
where we have used that $S(u_{\e,p})$ is regular. Repeating the proof of Proposition \ref{limitcoercivity} we know that $\mathcal{H}^{n-1}(S(u_{\e,p})\cap Q^{\prime})$ is bounded. By the definition of $M_{\delta}$ we have shown that 

\begin{equation*}
F(\w)(u,Q^{\prime})\leq F(\w)_p(u,Q)+C\,\delta.
\end{equation*}
The arbitrariness of $\delta$ proves the claim.
\end{proof}

\subsection{Random parking and isotropy}
It is a very challenging problem to relate the symmetries of the stochastic lattice to those of the limit energy density. It has already been observed in the periodic setting in \cite {ABC} that for Ising spin systems the discrete symmetries of the periodic lattice induces anisotropies in the limit. In the stochastic setting, on the other hand, one can imagine that the anisotropy of a single realization of the stochastic lattice may be averaged out by ergodicity. This is indeed the case of another interesting and more involved probabilistic setup: the so called random parking model investigated in \cite{glpe}. This model provides a stochastic lattice that is stationary, ergodic and in addition stationary with respect to rotations in the following sense: for all $R\in SO(n)$ there exists a measure preserving group action $\tau_R:\Om\rightarrow\Om$ such that

\begin{equation}\label{stochasticisotropy}
\mathcal{L}(\tau_R\w)=R\Lw.
\end{equation}

If the discrete energy densities are isotropic in the space variable, that means

\begin{equation}\label{spaceisotropy}
c_{nn}(z)=c_{nn}(|z|),\quad c_{lr}(z)=c_{lr}(|z|)\quad\forall z\in\R^n, 
\end{equation}

then one would expect that the limit energy is isotropic, too. Indeed, the following theorem holds.

\begin{theorem}\label{isotropicenergy}
Let $\mathcal{L}$ be a stationary (with respect to both translations and rotations) stochastic lattice and let $c_{nn},c_{lr}$ satisfy Hypothesis 1 and (\ref{spaceisotropy}). Then, for $\mathbb{P}$-almost every $\w\in\Om$,  $\phi_{\text{hom}}(\w;\cdot)$ is constant and the $\Gamma$-limit of the functionals $F_{\e}(\w)$ is given by

\begin{equation*}
F(\w)(u)=
\begin{cases}
\phi_{\text{hom}}(\w)\mathcal{H}^{n-1}(S(u)) &\mbox{if $u\in BV(D,\{\pm 1\})$,}\\
+\infty &\mbox{otherwise.}
\end{cases}
\end{equation*}
\end{theorem}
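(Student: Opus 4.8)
The plan is to build on Theorem~\ref{mainthm2}, which already does the heavy lifting. Indeed, for $\mathbb{P}$-a.e.\ $\w$ the functionals $F_\e(\w)$ $\Gamma$-converge to $\int_{S(u)}\phi_{\text{hom}}(\w;\nu_u)\,\mathrm{d}\mathcal{H}^{n-1}$ on $BV(D,\{\pm1\})$, the map $\nu\mapsto\phi_{\text{hom}}(\w;\nu)$ is continuous, and — reading (\ref{groupinvariant}) together with the identity $\phi_{\text{hom}}=\lim_L\phi^L_{\text{hom}}$ — the function $\phi_{\text{hom}}(\cdot;\nu)$ is invariant under $\tau_z$ for every $z\in\Zn$. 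Hence the only thing left to prove is that, for $\mathbb{P}$-a.e.\ $\w$, the density $\phi_{\text{hom}}(\w;\cdot)$ does not depend on $\nu$. Since $SO(n)$ acts transitively on $S^{n-1}$ and $\phi_{\text{hom}}(\w;\cdot)$ is continuous, it suffices to show that $\phi_{\text{hom}}(\w;R\nu)=\phi_{\text{hom}}(\w;\nu)$ for $\mathbb{P}$-a.e.\ $\w$, all $\nu\in S^{n-1}$, and all $R$ in a fixed countable dense subgroup $\mathcal{R}\subset SO(n)$ closed under inverses.

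The first step is the covariance identity $\phi_{\text{hom}}(\w;\nu)=\phi_{\text{hom}}(\tau_R\w;R\nu)$, which I would read off the homogenization formula of Theorem~\ref{mainthm2}. Given a competitor $u\in C_1^{u_{0,\nu},l_{t^{-1}}}(\w,Q_\nu(0,t))$ for the minimum problem defining $\phi_{\text{hom}}(\w;\nu)$, push it forward through the isometry $R$, i.e.\ set $v:=u\circ R^{-1}$, which is defined on $R\,\Lw=\mathcal{L}(\tau_R\w)$ by (\ref{stochasticisotropy}). Because the coefficients depend only on mutual distances by (\ref{spaceisotropy}) and because the Voronoi tessellation — hence the nearest-neighbour relation — is $SO(n)$-covariant (the Voronoi cell of $Rx$ in $\mathcal{L}(\tau_R\w)$ is $R$ times the cell of $x$ in $\Lw$, and $(x,y)\in\NNw$ iff $(Rx,Ry)\in\mathcal{NN}(\tau_R\w)$), one gets $F_1(\w)(u,Q_\nu(0,t))=F_1(\tau_R\w)(v,Q_{R\nu}(0,t))$; moreover the discrete boundary layer $\{\dist(\cdot,\partial Q_\nu(0,t))\le l_{t^{-1}}\}$ is mapped by $R$ onto the corresponding layer of $Q_{R\nu}(0,t)$ and $u_{0,\nu}\circ R^{-1}=u_{0,R\nu}$, so $v$ is admissible relative to $Q_{R\nu}(0,t)$ and $R\nu$. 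The correspondence $u\leftrightarrow v$ is a bijection between the two admissible classes, so passing to the infimum and letting $t\to+\infty$ gives the claim, first for rational directions on the full-measure set of Theorem~\ref{mainthm2}, then for all $\nu$ by continuity, and then for all $R\in\mathcal{R}$ after intersecting countably many full-measure sets. Replacing $\w$ by $\tau_R^{-1}\w$, this reads $\phi_{\text{hom}}(\w;R\nu)=\phi_{\text{hom}}(\tau_{R^{-1}}\w;\nu)$. This step is essentially bookkeeping, but the bookkeeping (cube orientation, discrete boundary width, Voronoi covariance) has to be carried out carefully.

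It remains to combine the covariance identity with the $\Zn$-invariance. If $\mathcal{L}$ is ergodic, Theorem~\ref{mainthm2} gives that $\phi_{\text{hom}}(\cdot;\nu)$ is $\mathbb{P}$-a.s.\ equal to a deterministic constant $\bar\phi(\nu)$; the covariance identity then yields $\bar\phi(R\nu)=\bar\phi(\nu)$ for all $R$, and continuity plus transitivity force $\bar\phi$ to be a constant, whence $F(\w)(u)=\phi_{\text{hom}}(\w)\mathcal{H}^{n-1}(S(u))$ by Theorem~\ref{mainthm2}. In the general stationary case one argues as follows: chaining the covariance identity (used with $R$ and with $R^{-1}$) with the $\Zn$-invariance shows that $\phi_{\text{hom}}(\cdot;\nu)$ is invariant under every map of the form $\tau_R\circ\tau_z\circ\tau_R^{-1}$ ($R\in\mathcal{R}$, $z\in\Zn$); the group $\widetilde{G}$ generated by these maps "translates the lattice" by vectors ranging over an $SO(n)$-invariant subgroup of $\R^n$, hence a dense one (for $n\ge2$), and each $\tau_R$ normalises $\widetilde{G}$. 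Consequently $SO(n)$ acts measure-preservingly on the quotient system carried by the $\sigma$-algebra $\widetilde{\mathcal{G}}$ of $\widetilde{G}$-invariant sets, $\phi_{\text{hom}}(\cdot;\nu)$ is $\widetilde{\mathcal{G}}$-measurable, and $\phi_{\text{hom}}(\tau_R\,\cdot;\nu)=\phi_{\text{hom}}(\cdot;R^{-1}\nu)$ on $\widetilde{\mathcal{G}}$; passing to the ergodic decomposition of this $SO(n)$-system reduces the statement, component by component, to the ergodic case. The hard part is precisely this last reduction: since $\tau_R$ permutes the ergodic components of the $\widetilde{G}$-system rather than fixing them, turning the heuristic "the density is isotropic on each component, and rotation averages the components out" into a rigorous statement is where the interplay between the discrete translational stationarity of $\mathcal{L}$ and the continuous rotational symmetry really has to be used.
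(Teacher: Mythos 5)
Your covariance identity and your treatment of the ergodic case are exactly the paper's route. The paper's proof fixes $\nu$, chooses countably many rotations $R_n$ with $\{R_n\nu\}$ dense in $S^{n-1}$, uses the continuity of $\phi_{\text{hom}}(\omega;\cdot)$ from Theorem \ref{mainthm2}, observes that, $\tau_{R_n}$ being measure preserving, $\phi_{\text{hom}}(\tau_{R_n}\omega;\nu)$ exists almost surely, and then concludes by the rotation-of-competitors argument of Theorem 9 in \cite{ACG2} -- i.e.\ precisely your identity $\phi_{\text{hom}}(\omega;\nu)=\phi_{\text{hom}}(\tau_R\omega;R\nu)$, resting on (\ref{stochasticisotropy}), (\ref{spaceisotropy}) and the $SO(n)$-equivariance of the Voronoi/nearest-neighbour construction; when $\phi_{\text{hom}}(\cdot;\nu)$ is deterministic this immediately gives $\phi_{\text{hom}}(\omega;R\nu)=\phi_{\text{hom}}(\tau_R^{-1}\omega;\nu)=\phi_{\text{hom}}(\omega;\nu)$ a.s. Up to and including your second paragraph the proposal is correct and coincides with the paper's argument.

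The genuine gap is your last paragraph. You flag yourself that the reduction via the group $\widetilde{G}$ generated by the conjugates $\tau_R\circ\tau_z\circ\tau_R^{-1}$ and an ergodic decomposition is not carried out, and in fact it cannot be completed: invariance of $\phi_{\text{hom}}(\cdot;\nu)$ under $\widetilde{G}$ gives no control on $\phi_{\text{hom}}(\tau_R\,\cdot\,;\nu)$, because $\tau_R$ permutes the ergodic components and $\phi_{\text{hom}}(\cdot;\nu)$ may separate them. A concrete obstruction: take $\Omega=SO(n)\times[0,1)^n$ with Haar $\times$ Lebesgue measure, $\mathcal{L}(R,U)=R(\mathbb{Z}^n+U)$, $\tau_z(R,U)=(R,\,U+R^{-1}z \bmod \mathbb{Z}^n)$, $\tau_{R'}(R,U)=(R'R,U)$, and $c_{nn}\equiv 1$, $c_{lr}\equiv 0$. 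This lattice is admissible, stationary and isotropic in law in the sense of (\ref{stochasticisotropy}), and the coefficients satisfy Hypothesis 1 and (\ref{spaceisotropy}), yet $\phi_{\text{hom}}((R,U);\nu)=c\,\|R^{-1}\nu\|_{1}$ is not constant in $\nu$. Here the maps $\tau_{R'}\tau_z\tau_{R'}^{-1}$ only shift the $U$-coordinate, so the $\widetilde{G}$-ergodic components are essentially the fibres $\{R\}\times[0,1)^n$, on each of which $\phi_{\text{hom}}(\cdot;\nu)$ is constant while $\tau_{R'}$ permutes the fibres -- exactly the scenario you worry about, and it is fatal: no decomposition bookkeeping can force $\phi_{\text{hom}}(\tau_R\omega;\nu)=\phi_{\text{hom}}(\omega;\nu)$ in this generality. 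The pointwise-in-$\omega$ isotropy thus genuinely requires ergodicity of the translation action (satisfied by the random parking lattice motivating the theorem), in which case $\phi_{\text{hom}}(\cdot;\nu)$ is deterministic and your second paragraph already finishes the proof -- this is also the mechanism behind the paper's citation of \cite{ACG2}; the extra machinery of your third paragraph is unnecessary there and insufficient beyond it.
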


\begin{proof}
Fix $\nu\in S^{n-1}$ and consider countably many rotations $\{R_n\}$ such that $\{R_n\nu\}$ is dense in $S^{n-1}$. By continuity it suffices to prove that there exists a set $\Om^{\prime}$ of full measure such that $\phi_{\text{hom}}(\w;R_n\nu)=\phi_{\text{hom}}(\w;\nu)$ for all $n\in\mathbb{N},\w\in\Om^{\prime}$. Let $\Om^{\prime\prime}$ be the set of full measure where $\phi_{\text{hom}}(\w;\nu)$ exists. Since $\tau_{R_n}$ is measure preserving, it follows there exists a set $\Om^{\prime}_n\subset\Omega^{\prime\prime}$ such that also $\phi_{\text{hom}}(\tau_{R_n}\w;\nu)$ exists for all $\w\in\Omega^{\prime}_n$. If we define $\Omega^{\prime}=\bigcap_n\Omega^{\prime}_n$, then the claim follows by the same arguments as in the proof of Theorem 9 in \cite{ACG2}.
\end{proof}

\begin{remark}\rm{
\item[(1)]Under the additional ergodicity assumption Theorem \ref{isotropicenergy} shows that the $\Gamma$-limit turns out to be of the form $C\,\mathcal{H}^{n-1}(S(u))$ where $C$ is a deterministic constant. 
\item[(2)] Combining the previous remark with the results on non-local energies obtained in \cite{AlGe14}, the lattice given by the random parking model can be used to obtain an approximation of the Ohta-Kawasaki energy. 
}
\end{remark}

\subsection{Multi-body interactions}

Our techniques can also be applied to treat the variational convergence of energies accounting for multi-body interactions. To be precise, given $M\in\mathbb{N}$, an admissible lattice $\Sigma=\Sigma(r,R)$ and a function $u\in C_{\e}(\Sigma)$ we can consider an (already localized) energy of the form

\begin{equation}\label{multibody}
F_{\e,M}(u,A)=\sum_{\substack{x=(x_1,\dots,x_m)\in\Sigma^M\\ \e x_1,\dots,\e x_M\in A}}\e^{n-1}f^{\e}(x,u(\e x_1),\dots,u(\e x_M)),
\end{equation}

Let us set $\underline{1}=(1,\dots,1)\in\mathbb{R}^M$. Analogous to the case of pairwise interactions, we make the following assumptions on the density $f^{\e}:(\rn)^M\times\{\pm 1\}^M\rightarrow [0,+\infty)$: There exists a monotone decreasing function $J:[0,+\infty)\rightarrow [0,+\infty)$ with

\begin{equation*}
\int_{(\rn)^{M-1}}J(|x|_{\infty})|x|_{\infty}\,\mathrm{d}x <+\infty
\end{equation*}

such that
\\
\begin{itemize}
\item[(M1)] $f^{\e}(x_1,\dots,x_M,\pm\underline{1})=0\quad\forall x_1,\dots,x_M\in\rn,\quad$   (ferromagnetic behaviour)
\\
\item[(M2)] $\min\{f^{\e}(x_1,\dots,x_M,u_1,\dots,u_M):\;\sup_i|x_1-x_i|\leq 2R,\,\exists i:\;u_i\neq u_1\}\geq c>0,\quad$   (coercivity)
\\
\item[(M3)] $f^{\e}(x_1,\dots,x_M,u_1,\dots,u_M)\leq J(\sup_i|x_1-x_i|),\quad$    (decay)\\
\end{itemize}

where, for $x=(x_1,\dots,x_{M-1})\in(\rn)^{M-1}$, we use the notation $|x|_{\infty}=\sup_i|x_i|$. Note that the coercivity assumption depends on the geometry of the lattice. 
\\
\hspace*{0,5cm}
Under the assumptions (M1)-(M3) one can check that, up to minor changes, the proofs for pairwise interaction energies also work in this setting, so that the following theorems hold.

\begin{theorem}\label{main1multi}
Let $\Sigma$ be admissible and let $f^{\e}$ satisfy (M1)-(M3). For every sequence $\e_n\to 0^+$ there exists a subsequence $\e_{n_k}$ such that the functionals $F_{\e_{n_k},M}$ defined in \ref{multibody} $\Gamma$-converge with respect to the strong $\lud$-topology to a functional $F_M:\lud\rightarrow [0,+\infty]$ of the form

\begin{equation*}
F_M(u)=
\begin{cases}
\int_{S(u)}\phi^M_{\Sigma}(x,\nu_u)\,\mathrm{d}\mathcal{H}^{n-1} &\mbox{if $u\in BV(D,\{\pm 1\})$,} \\
+\infty &\mbox{otherwise.}
\end{cases}
\end{equation*}

Moreover a local version of the theorem holds: For all $u\in BV(D,\{\pm 1\})$ and all $A\in\Ard$

\begin{equation*}
\Gamma-\lim_k F_{\e_{n_k},M}(u,A)=\int_{S(u)\cap A}\phi^M_{\Sigma}(x,\nu_u)\,\mathrm{d}\mathcal{H}^{n-1}.
\end{equation*}
\end{theorem}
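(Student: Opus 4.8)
The plan is to follow the proof of Theorem \ref{mainthm1} essentially line by line, replacing the two propositions that feed the abstract integral representation by their multi-body analogues, and then assembling everything in exactly the same way. Concretely, one needs: (a) a coercivity statement saying that $F'(u,A)<+\infty$ forces $u\in BV(A,\{\pm1\})$ with $F'(u,A)\ge c\,\mathcal{H}^{n-1}(S(u)\cap A)$; (b) an upper bound $F''(u,A)\le C\,\mathcal{H}^{n-1}(S(u)\cap A)$ for $A\in\Ard$; and (c) the weak subadditivity of $F''(u,\cdot)$. With these three facts the proof of Theorem \ref{main1multi} is verbatim that of Theorem \ref{mainthm1}: inner regularity plus subadditivity of $F''(u,\cdot)$, compactness of $\Gamma$-convergence to extract $\e_{n_k}$, extension to $\mathcal{A}(D)$ by inner regularity, the De Giorgi--Letta criterion to obtain a (Radon, by (b)) measure, $L^1$-lower semicontinuity of $\Gamma$-limits, the growth bounds (a)--(b), and the integral representation theorem of \cite{BFLM} (in the form of Theorem \ref{fonseca} if one additionally has the spin-flip symmetry $f^\e(x,-u)=f^\e(x,u)$, or its general version otherwise). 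The local statement is again immediate from the construction.

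For (a) I would argue as in Proposition \ref{limitcoercivity}. Given a recovery sequence $u_\e\in C_\e(\Sigma)$, the jump set $S(u_\e)\cap A_\eta$ is covered by the faces $\e(C(x)\cap C(y))$ over Voronoi-adjacent pairs with $u_\e(\e x)\neq u_\e(\e y)$, and by Lemma \ref{cellproperty}(ii) such a pair satisfies $|x-y|\le 2R$. The one genuinely new point is to make the multi-body energy see each such face: to the (ordered) pair $(x,y)$ one associates the $M$-tuple $(x,y,x,\dots,x)\in\Sigma^M$ --- here it is essential that $\Sigma^M$ allows repeated entries --- which by assumption (M2) contributes at least $c\,\e^{n-1}$ to $F_{\e,M}(u_\e,A)$ whenever $\e x,\e y\in A$, which holds for $\e$ small since $(A_\eta)^{\e R}\subset A$. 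Distinct bad pairs give distinct tuples, so $F_{\e,M}(u_\e,A)\ge c\,\e^{n-1}\#\{\text{bad n.n.\ pairs in }(A_\eta)^{\e R}\}$, and combining with Lemma \ref{cellproperty}(iii) gives $F_{\e,M}(u_\e,A)\ge (c/M_2)\,\mathcal{H}^{n-1}(S(u_\e)\cap A_\eta)$. Theorem \ref{compactemb} and lower semicontinuity then yield the claim, letting $\eta\to0$.

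For (b), following Proposition \ref{limitbound}, one reduces to polyhedral $u$ with polyhedral set $E$, sets $u_\e$ equal to $u$ on $A$ and $+1$ elsewhere, and estimates $F_{\e,M}(u_\e,A)$: by (M1) only $M$-tuples on which not all $u(\e x_i)$ coincide contribute, and by (M3) each contributes at most $\e^{n-1}J(\sup_i|x_1-x_i|)$. Grouping the tuples by the offset pattern $\xi=(x_2-x_1,\dots,x_M-x_1)\in(r'\ZZ^n)^{M-1}$, such a tuple straddles $\partial E$ only if $x_1$ lies within distance $\sim\e(|\xi|_\infty+2R)$ of $\partial E$, so by Lemma \ref{cellproperty} and the Minkowski-content regularity of $\partial E$ (as in \eqref{minkowskicont}) the number of contributing tuples with a fixed $\xi$ is at most $C\,\e^{1-n}\mathcal{H}^{n-1}(S(u)\cap A)(|\xi|_\infty+2R)$; summing over $\xi$ bounds the total by $C\,\mathcal{H}^{n-1}(S(u)\cap A)\sum_\xi J(|\xi|_\infty)(|\xi|_\infty+2R)$, which is finite precisely because $\int_{(\rn)^{M-1}}J(|x|_\infty)|x|_\infty\dx<+\infty$, and the tails $|\xi|_\infty>M_\delta$ are made arbitrarily small. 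The passage from polyhedral to general $u\in BV(D,\{\pm1\})$ (polyhedral approximation plus lower semicontinuity of the $\Gamma$-$\limsup$) is unchanged. Statement (c) is obtained by the same cut-and-paste-and-average argument as in Proposition \ref{almostsub}: one interpolates between near-optimal sequences on $A$ and $B$ across $N_\e\sim d/(\e(M_\delta+r))$ thin layers, the large-offset part ($|\xi|_\infty>M_\delta$) is controlled by the integrability of $J$ as above, and averaging over the layers kills the finite-range cross-interactions since each point lies in at most two consecutive layers.

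I expect the main obstacle to be entirely bookkeeping rather than conceptual: carrying the one-line $\xi$-sum estimates of the pairwise case through the $(M-1)$-fold offset sums, and checking that the coercivity assumption (M2) --- which only bites when the whole cluster has diameter $\le 2R$ --- still detects every jump of the Voronoi interpolation. The repetition trick $(x,y,x,\dots,x)\in\Sigma^M$ is exactly what makes the latter work and is the only new idea needed.
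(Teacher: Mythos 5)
Your proposal is correct and is essentially the paper's own argument: the paper proves Theorem \ref{main1multi} only by asserting that the pairwise proofs (Propositions \ref{limitcoercivity}, \ref{limitbound}, \ref{almostsub} and the scheme of Theorem \ref{mainthm1}) carry over ``up to minor changes'', and your adaptation --- the repetition trick $(x,y,x,\dots,x)$ so that (M2) detects every jumping nearest-neighbour pair, the offset-pattern bookkeeping with the integrability of $J(|\cdot|_\infty)$ for the upper bound and the cut-and-average subadditivity --- is exactly that adaptation, including the correct observation that without spin-flip symmetry of $f^\e$ one invokes the general version of the representation theorem in \cite{BFLM} rather than Theorem \ref{fonseca} verbatim.
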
 

\hspace*{0,5cm}
To state the convergence of boundary value problems, given a polyhedral function $u_{\varphi}$ on $A\in\Ard$ and a sequence $l_{\e}$ as in (\ref{slowboundary}), we define

\begin{equation}\label{boundarymulti}
F^{\varphi,l_{\e}}_{\e,M}(u,A)=
\begin{cases}
F_{\e,M}(u,A) &\mbox{if $u\in C_{\e}^{{\varphi},l_{\e}}(\Sigma,A)$,}\\
+\infty &\mbox{otherwise.}
\end{cases}
\end{equation}

\begin{theorem}\label{constrainedmulti}
Let $\Sigma$ be admissible and let $f^{\e}$ satisfy (M1)-(M3). For every sequence converging to $0$, let $\e_j$ and $\phi_{\Sigma}$ be as in Theorem \ref{main1multi}. Assume that the limit integrand $\phi_{\Sigma}$ is continuous on $D\times S^{n-1}$. For every Lipschitz set $A\subset\subset D$ the functionals $F^{\varphi,l_{\e_j}}_{\e_j,M}(\cdot,A)$ defined in (\ref{boundarymulti}) $\Gamma$-converge with respect to the strong $\lud$-topology to the functional $F^{\varphi}_M(\cdot,A):\lud\rightarrow [0,+\infty]$ defined by
\begin{equation*}
F^{\varphi}_M(u,A)=
\begin{cases}
\int_{S(u_{Q,\varphi})\cap \overline{A}}\phi^M_{\Sigma}(x,\nu_{u_{A,\varphi}})\,\mathrm{d}\mathcal{H}^{n-1} &\mbox{if $u\in BV(A,\{\pm 1\})$,}\\
+\infty &\mbox{otherwise.}
\end{cases}
\end{equation*}
\end{theorem}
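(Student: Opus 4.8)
The plan is to adapt the proof of Theorem~\ref{constrainedproblem} essentially verbatim, using Theorem~\ref{main1multi} in place of Theorem~\ref{mainthm1} and letting the kernel $J(\sup_i|x_1-x_i|)$ from assumption (M3) play the role of the summable pairwise kernel $J_{lr}(|x-y|)$. First I would record the multi-body analogue of Proposition~\ref{limitcoercivity}: by the coercivity assumption (M2), every $M$-tuple on which $u_\e$ is non-constant and whose vertices lie within mutual distance $2\e R$ contributes at least $c\,\e^{n-1}$ to $F_{\e,M}(u_\e,A)$, the associated Voronoi faces covering — up to a thin collar — $S(u_\e)\cap A_\eta$; together with Theorem~\ref{compactemb} and Lemma~\ref{cellproperty} this forces finiteness of the $\Gamma$-$\liminf$ to imply $u\in BV(A,\{\pm1\})$, so we may assume $u\in BV(A,\{\pm1\})$. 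The only structural ingredients then needed are a tail estimate — for every $\delta>0$ there is $M_\delta>0$ such that the total contribution per unit $(n-1)$-dimensional area of all $M$-tuples of diameter larger than $M_\delta$ is at most $\delta$, which follows from $\int_{(\rn)^{M-1}}J(|x|_\infty)|x|_\infty\,\mathrm{d}x<+\infty$ and Lemma~\ref{cellproperty} — and the Minkowski-content bounds of the type \eqref{minkowskicont}--\eqref{minkowskibound} bounding the number of $M$-tuples of bounded diameter with a vertex within $\e\cdot(\text{diameter})$ of a Lipschitz hypersurface.

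For the $\liminf$-inequality, given $u_j\to u$ in $\lud$ with $\liminf_j F^{\varphi,l_{\e_j}}_{\e_j,M}(u_j,A)<+\infty$ (so $u_j\in C_{\e_j}^{\varphi,l_{\e_j}}(\Sigma,A)$) and $\delta>0$, use Theorem~\ref{lipapprox}, Remark~\ref{alsointerior} and $\mathcal{H}^{n-1}(S(u_\varphi)\cap\partial A)=0$ to pick Lipschitz sets $A_1\subset\subset A\subset\subset A_2$ with $\mathcal{H}^{n-1}(S(u_\varphi)\cap(A_2\setminus\overline{A_1}))\le\delta$ and $\mathcal{H}^{n-1}(S(u_\varphi)\cap\partial A_i)=0$. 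Glueing $u_j$ with the interpolation $u_{\varphi,j}$ of $u_\varphi$ outside $A$ gives $\tilde u_j\to u_{A,\varphi}$, and, arguing as in Proposition~\ref{limitbound}, $\limsup_j F_{\e_j,M}(u_{\varphi,j},A_2\setminus\overline{A_1})\le C\delta$. The surplus $F_{\e_j,M}(\tilde u_j,A_2)-F_{\e_j,M}(u_j,A)-F_{\e_j,M}(u_{\varphi,j},A_2\setminus\overline{A_1})$ is carried only by $M$-tuples straddling $\partial A$; those of diameter $>M_\delta$ contribute at most $C\mathcal{H}^{n-1}(\partial A)\delta$ by the tail estimate, while for large $j$ those of diameter $\le M_\delta$ lie in an $\e_j M_\delta$-collar of $\partial A$ inside $A_2\setminus\overline{A_1}$ on all of whose vertices (since eventually $l_{\e_j}\e_j>\e_j M_\delta$) the boundary condition forces $\tilde u_j=u_{\varphi,j}$, so this part is absorbed into $F_{\e_j,M}(u_{\varphi,j},A_2\setminus\overline{A_1})$. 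Applying Theorem~\ref{main1multi} on $A_2$, then letting $A_2\downarrow\overline A$ and $\delta\to0$, gives $F_M(u_{A,\varphi},\overline A)\le\liminf_j F^{\varphi,l_{\e_j}}_{\e_j,M}(u_j,A)$.

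For the $\limsup$-inequality I would first treat $u$ coinciding with $u_\varphi$ in a neighbourhood of $\partial A$: take a recovery sequence $u_j\to u$ for $F_M(\cdot,A)$ from Theorem~\ref{main1multi}, choose regular sets $A_1\subset\subset A_2\subset\subset A$ with $u=u_\varphi$ on $A\setminus\overline{A_1}$ and $\mathcal{H}^{n-1}(S(u_\varphi)\cap\partial A_1)=0$, and run the layer-averaging argument of Proposition~\ref{almostsub}: with $N_j=[d/(\e_j(M_\delta+r))]$, $d\le\dist(A_1,\partial A_2)$, and $A_{j,k}=\{x\in A:\dist(x,A_1)<k\e_j(M_\delta+r)\}$, let $u_j^k$ equal $u_{\varphi,j}$ outside $A_{j,k}$ (within the boundary layer) and $u_j$ inside, and average over $k$ to select $k_j$ making the cross-interactions of diameter $\le M_\delta$ negligible; those of diameter $>M_\delta$ cost at most $C\delta$, and the boundary conditions hold for large $j$ by \eqref{slowboundary}. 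This yields $\limsup_j F^{\varphi,l_{\e_j}}_{\e_j,M}(u_j^{k_j},A)\le F_M(u,A)+C\mathcal{H}^{n-1}(\partial A_1)\delta+C\mathcal{H}^{n-1}(S(u_\varphi)\cap(A\setminus\overline{A_1}))$; letting $\delta\to0$ then $A_1\uparrow A$ proves the bound for such $u$. For a general $u\in BV(A,\{\pm1\})$ one approximates it by the sequence produced by the Reshetnyak-type approximation lemma used in the proof of Theorem~\ref{constrainedproblem} (each term still agreeing with $u_\varphi$ near $\partial A$) and passes to the limit via lower semicontinuity of the $\Gamma$-$\limsup$ and Reshetnyak's continuity theorem; equicoercivity in $L^1(A)$ then gives the stated formula with $S(u_{A,\varphi})\cap\overline A$.

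The hard part, exactly as in Theorem~\ref{constrainedproblem}, is the simultaneous control of the interactions straddling the modified boundary: the far interactions must be bounded uniformly by the tail of $J$ — which is where $\int_{(\rn)^{M-1}}J(|x|_\infty)|x|_\infty\,\mathrm{d}x<+\infty$ and the Minkowski-content bounds enter — while the near interactions must be killed by the layer-averaging device, which requires the slow-decay condition \eqref{slowboundary} so that $l_\e\e$ eventually dominates $\e M_\delta$. The one genuinely new feature relative to the pairwise case is that a single contributing $M$-tuple may have several vertices near $\partial A$; but since all its vertices lie within $\sup_i|x_1-x_i|$ of one another, the diameter-based splitting carries over unchanged, with the pair distance replaced by the tuple diameter and $J_{lr}$ replaced by $J$. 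No further new idea is needed.
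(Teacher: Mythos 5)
Your proposal is correct and follows exactly the route the paper intends: for the multi-body theorems the paper gives no separate argument, stating only that the proofs of Propositions \ref{limitcoercivity}--\ref{almostsub} and Theorem \ref{constrainedproblem} carry over ``up to minor changes'', and your adaptation (coercivity via (M2) on small-diameter non-constant tuples, tail/Minkowski estimates with $J(\sup_i|x_1-x_i|)$ and tuple diameter in place of $J_{lr}$ and pair distance, the same gluing, layer-averaging and Reshetnyak-type approximation steps) is precisely that adaptation. The only point worth spelling out when writing it up is the cross-term in the averaging step, where the pairwise identity for $|w^k(x)-w^k(y)|$ must be replaced by decomposing non-constancy of the glued function on a tuple into pairwise non-constancies (or controlling it via the jump-set bound from coercivity), which is indeed a minor change and does not affect the argument.
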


\begin{theorem}\label{convofminimizersmulti}
Let $A\subset\subset D$ be a Lipschitz set. Under the assumptions of Theorem \ref{constrainedmulti}, the following holds:
\begin{enumerate}
\renewcommand{\labelenumi}{(\roman{enumi})}
\item
\begin{equation*}
\lim_j\left(\inf_{u\in BV(A,\{\pm 1\})}F^{\varphi,l_{\e_j}}_{\varepsilon_j,M}(u,A)\right)=\min_{u\in BV(A,\{\pm 1\})}F^{\varphi}_M(u,A).
\end{equation*}
\item Moreover, if $(u_j)_j$ is a converging sequence in $L^1(A,\{\pm 1\})$ such that
\begin{equation*}
F^{\varphi,l_{\e_j}}_{\varepsilon_j,M}(u_j,A)=\inf_{u\in BV(A,\{\pm 1\})}F^{\varphi,l_{\e_j}}_{\varepsilon_j,M}(u,A)+{ \scriptstyle \mathcal{O}} (1),
\end{equation*}
then its limit is a minimizer of $F^{\varphi}_M(\cdot,A)$.
\end{enumerate}
\end{theorem}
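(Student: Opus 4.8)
The plan is to deduce both assertions from the $\Gamma$-convergence statement in Theorem \ref{constrainedmulti} together with the fundamental theorem of $\Gamma$-convergence, exactly as Theorem \ref{convofminimizers} was obtained from Theorem \ref{constrainedproblem} in the pairwise case. Two ingredients are needed: that the functionals $F^{\varphi,l_{\e_j}}_{\e_j,M}(\cdot,A)$ $\Gamma$-converge to $F^{\varphi}_M(\cdot,A)$ with respect to a topology in which they are equicoercive, and the equicoercivity itself. For the first, I would observe that although Theorem \ref{constrainedmulti} is stated for the strong $\lud$-topology, its proof is local near $\overline{A}$, so the same $\Gamma$-convergence holds with respect to the strong $L^1(A)$-topology: the recovery sequences built for the $\limsup$-inequality can be taken to agree with the prescribed boundary datum outside $A$ and are therefore unaffected by the restriction, while the $\liminf$-inequality only improves on the smaller domain.

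It remains to establish equicoercivity in $L^1(A)$. Let $(u_j)$ satisfy $\sup_j F^{\varphi,l_{\e_j}}_{\e_j,M}(u_j,A)<+\infty$; then $u_j\in C_{\e_j}^{{\varphi},l_{\e_j}}(\Sigma,A)$ takes values in $\{\pm1\}$, so $\|u_j\|_{L^1(A)}\le|A|$ is automatic and it suffices to prove precompactness. Here the multi-body structure enters: I would repeat the argument of Proposition \ref{limitcoercivity}, with the pair interaction replaced by $f^{\e}$ and with the lower bound supplied by assumption (M2) --- whenever the restriction of a configuration to a bounded cluster of interacting lattice points is not constant, the corresponding $M$-body term is bounded below by $c>0$ --- combined with Lemma \ref{cellproperty} to relate this quantity to $\mathcal{H}^{n-1}(S(u_j)\cap A_\eta)$. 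One obtains, for every $\eta>0$ and uniformly in $j$,
\[
\mathcal{H}^{n-1}(S(u_j)\cap A_\eta)\le C\,F_{\e_j,M}(u_j,A)\le C',
\]
and Theorem \ref{compactemb} applied on $A_\eta$, followed by a diagonal argument as $\eta\to0$, yields a subsequence converging in $L^1(A)$ to some $u\in BV(A,\{\pm1\})$.

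With $\Gamma$-convergence in $L^1(A)$ and equicoercivity in hand, both claims follow from the general theory of $\Gamma$-convergence (see \cite{DM, GCB}): the infima $\inf_{u}F^{\varphi,l_{\e_j}}_{\e_j,M}(u,A)$ converge to $\min_{u}F^{\varphi}_M(u,A)$ (in particular the limit functional attains its minimum on $BV(A,\{\pm1\})$), and any sequence realizing the infima up to ${\scriptstyle \mathcal{O}}(1)$ is precompact with every cluster point a minimizer of $F^{\varphi}_M(\cdot,A)$. The only step that is not purely formal is the uniform perimeter bound above, where the sole difference with respect to the pairwise case is that (M2) is invoked in place of Hypothesis 1; no genuinely new obstacle arises, which is why the details can be left to the reader, as the paper does for Theorems \ref{main1multi}--\ref{constrainedmulti}.
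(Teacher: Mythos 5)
Your proposal is correct and follows essentially the same route as the paper: Theorem \ref{convofminimizersmulti} is obtained, exactly as Theorem \ref{convofminimizers}, by noting that the constrained $\Gamma$-convergence also holds in the $L^1(A)$-topology and that the functionals are equicoercive there (the perimeter bound coming from the coercivity assumption, (M2) in place of the lower bound in Hypothesis 1, via the argument of Proposition \ref{limitcoercivity}), and then invoking the fundamental theorem of $\Gamma$-convergence. Your added detail on the equicoercivity step is precisely the ``minor change'' the paper leaves to the reader.
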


\hspace*{0,5cm}
For the stochastic homogenization we have to replace (\ref{periodicityassumpt}) by

\begin{equation}\label{multiperiodic}
f^{\e}(x_1,\dots,x_M,\cdot)=f(x_1-x_2,\dots,x_1-x_M,\cdot).
\end{equation}

Under this additional assumption also the analogue of Theorem \ref{mainthm2} holds.

\begin{theorem}\label{main2multi}
Let $\mathcal{L}$ be a stationary stochastic lattice and let $f^{\e}$ satisfy Hypothesis 1 with the additional structure of (\ref{multiperiodic}). For $\mathbb{P}$-almost every $\w$ and for all $\nu\in S^{n-1}$ there exists
\begin{equation*}
\phi^M_{\text{hom}}(\w;\nu):=\lim_{t\to +\infty}\frac{1}{t^{n-1}}\inf\left\{F_{1,M}(\w)(u,Q_{\nu}(0,t)):\; u\in C_1^{u_{0,\nu},l_{t^{-1}}}(\w,Q_{\nu}(0,t))\right\}.
\end{equation*}
The functionals $F_{\e,M}(\w)$ $\Gamma$-converge with respect to the $\lud$-topology to the functional $F_{\text{hom},M}(\w):\lud\rightarrow [0,+\infty]$ defined by
\begin{equation*}
F_{\text{hom},M}(\w)(u)=
\begin{cases}
\int_{S(u)} \phi^M_{\text{hom}}(\w;\nu_u)\,\mathrm{d}\mathcal{H}^{n-1} &\mbox{if $u\in BV(D,\{\pm 1\})$,}\\
+\infty &\mbox{otherwise.}
\end{cases}
\end{equation*}
If $\mathcal{L}$ is ergodic, then $\phi^M_{\text{hom}}(\cdot,\nu)$ is constant almost surely and is given by
\begin{equation*}
\phi^M_{\text{hom}}(\nu)=\lim_{t\to +\infty}\frac{1}{t^{n-1}}\int_{\Om}\inf\left\{F_{1,M}(\w)(u,Q_{\nu}(0,t)):\; u\in C_1^{u_{0,\nu},l_{t^{-1}}}(\w,Q_{\nu}(0,t))\right\}\,\mathrm{d}\mathbb{P}(\w).
\end{equation*}
\end{theorem}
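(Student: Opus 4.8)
The plan is to carry over, essentially line by line, the proof of Theorem \ref{mainthm2}, since the deterministic ingredients it relies on are already available in the multi-body setting: Theorem \ref{main1multi} gives compactness and integral representation of any $\Gamma$-limit for a fixed realization, and Theorems \ref{constrainedmulti} and \ref{convofminimizersmulti} give $\Gamma$-convergence of the boundary value problems and convergence of the associated minimizers. As in the pairwise case, I would first fix $L\in\mathbb{N}$ and work with the truncated energies $F^L_{\e,M}(\w)(u,A)$ obtained from \eqref{multibody} by retaining only those $M$-tuples $x=(x_1,\dots,x_M)$ with $\sup_i|x_1-x_i|\le L$; by the decay assumption (M3) and the integrability of $J$ over $(\rn)^{M-1}$ these approximate $F_{\e,M}$. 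On the full-measure set where Theorem \ref{main1multi} applies, every $\Gamma$-limit of $F^L_{\e_j,M}(\w)$ has the form $\int_{S(u)\cap A}\phi^L(\w;x,\nu_u)\,\mathrm{d}\mathcal{H}^{n-1}$, and the bulk of the work is to identify $\phi^L$ with a base-point–independent homogenized density $\phi^L_{\text{hom}}$.

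Next I would reproduce the four-step scheme of the proof of Theorem \ref{mainthm2}. \textbf{Step 1:} for a rational direction $\nu$, fix a rational matrix $A_\nu$ and an integer $M(\nu)>L$ with $M(\nu)A_\nu(\mathbb{Z}^{n-1}\times\{0\})\subset\Zn$, associate to $I\in\mathcal{I}$ the slab $I_n$, and define $\tilde\mu^L_\nu(I)(\w)$ as the minimum of $F^L_{1,M}(\w)(\cdot,I_n)$ over competitors carrying the $u_{0,\nu}$ boundary datum, plus $K\,P(I,\mathbb{R}^{n-1})$. Essential boundedness follows from (M1) (ferromagnetic behaviour, so the interpolation of $u_{0,\nu}$ has energy concentrated near $\nu^\perp$) together with (M2), (M3) and Lemma \ref{cellproperty}; stationarity follows from $\mathcal{L}(\tau_z\w)=\Lw+z$, using that a shift by $z^\nu_M\in\nu^\perp\cap\Zn$ maps $M$-tuples to $M$-tuples, preserves membership of every $x_i$ in the lattice, and (since $\Zn\subset\rzn$) preserves the truncation $\sup_i|x_1-x_i|\le L$; subadditivity follows by gluing minimizers on a partition $\{I^i\}$ and setting $u=u_{0,\nu}$ elsewhere, the only $M$-tuples contributing across faces being those all of whose points lie within distance $\lesssim L$ of a lower-dimensional face $\overline{MA_\nu I^{k_1}}\cap\overline{MA_\nu I^{k_2}}$, whose number is $O(\mathcal{H}^{n-2})$ per face and is absorbed by choosing $K$ large. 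Theorem \ref{ergodicthm} then yields $\phi^L_{\text{hom}}(\w;\nu)$ a.s.\ along $t_k=2M(\nu)k$, and the passage to arbitrary sequences, the invariance under $\tau_z$, the independence of $A_\nu$, the extension from rational to irrational $\nu$ and the continuity of $\nu\mapsto\phi^L_{\text{hom}}(\w;\nu)$ go through as in Substeps 1.2--1.3, with (M3) replacing the bound $c^\e_{lr}\le J_{lr}$ in the surface error estimates. \textbf{Step 2}, translation invariance (replacing $Q_\nu(0,t)$ by $Q_\nu(x,t)$), is carried out verbatim: for rational $\nu$ and $x\in\Zn$ one combines Birkhoff's ergodic theorem along the $\tau_x$-orbit with the group invariance \eqref{groupinvariant} to find integers $l$ with $\tau_{lx}\w$ in a prescribed good event, then extends minimizers between the nested cubes $Q_\nu(-mx,m)$ and $Q_\nu(-lx,\tilde l_{1,2})$ at cost $O(m^{n-2}\cdot\#\text{shifts})$; irrational $x$ and $\nu$ follow by continuity.

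\textbf{Step 3:} with $\phi^L_{\text{hom}}(\w;\nu)$ now known to be base-point independent, I would show it is the surface density of every $\Gamma$-limit of $F^L_{\e_j,M}(\w)$: the lower bound via Theorem \ref{constrainedmulti} (whose recovery sequences already satisfy the discrete boundary conditions) plus a rescaling, and the upper bound by covering $S(u)$ (for $u$ polyhedral, using continuity in $\nu$) by small cubes on which rescaled minimizers are pasted, the inter-cube interactions living on sets of measure $O(\e^n)$ since the discrete boundary width exceeds $L$. \textbf{Step 4:} using Theorems \ref{constrainedmulti}, \ref{convofminimizersmulti} and the multi-body analogue of Remark \ref{remarkboundary} one replaces the fixed boundary width by a slowly growing $l_\e$ as in \eqref{slowboundary}; then the analogue of \eqref{truncestimate}, namely $0\le(\mu_k-\mu^L_k)/t_k^{n-1}\le C\int_{\{|x|_\infty>L\}}J(|x|_\infty)(|x|_\infty+2R)\,\dx$, shows $\{\phi^L_{\text{hom}}(\w;\nu)\}_L$ is Cauchy uniformly in the sequence $t_k$, so $\phi^M_{\text{hom}}(\w;\nu):=\lim_L\phi^L_{\text{hom}}(\w;\nu)$ exists on $\Om_\Gamma=\bigcap_L\Om^L$, equals $\lim_k\mu_k(\w)/t_k^{n-1}$, and is convex and bounded, hence continuous in $\nu$; Step 3 then applies with $F_{1,M}$ in place of $F^L_{1,M}$, completing the $\Gamma$-convergence statement. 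In the ergodic case \eqref{groupinvariant} forces each $\phi^L_{\text{hom}}(\cdot;\nu)$ to be constant, hence so is $\phi^M_{\text{hom}}(\cdot;\nu)$; the integral formula follows by integrating the defining limit over $\Om$ and applying dominated convergence, justified by the uniform bound \eqref{minestimate} adapted to $f^\e$.

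I expect Step 2 to remain the main obstacle, exactly as in the pairwise case: the interplay of Birkhoff's theorem with the Ackoglu--Krengel subadditive machinery, and the careful bookkeeping of the boundary and $(n-2)$-skeleton error terms when extending minimizers between cubes with different centers. The multi-body structure contributes only additional bookkeeping — one must check at each gluing step (in Substep 1.1, in Step 2, and in the cube-pasting of Step 3) that the number of $M$-tuples having some pair of points on opposite sides of a face, or straddling the lower-dimensional skeleton, is still $O(\mathcal{H}^{n-2})$ times a power of the truncation length — but this is immediate from the uniform $r$-separation of $\Lw$ together with (M3), so no genuinely new difficulty arises beyond what is already present in the proof of Theorem \ref{mainthm2}.
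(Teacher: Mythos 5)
Your proposal is correct and follows exactly the route the paper intends: the paper itself gives no separate proof of Theorem \ref{main2multi}, asserting only that ``up to minor changes, the proofs for pairwise interaction energies also work in this setting,'' and your adaptation of the four-step scheme of Theorem \ref{mainthm2} (truncation, subadditive process, translation invariance, identification of the density, removal of the truncation, and the ergodic-case formula via dominated convergence) is precisely that adaptation, with the multi-body bookkeeping handled as the paper expects.
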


\medskip

\noindent {\bf Acknowledgements}: The work of MR was supported by SFB / Transregio 109 'Discretization in Geometry and Dynamics'.
The authors thank A. Piatnitski for suggesting the key idea in the proof of Step $2$ in Theorem $5.5$.

\appendix
\section{}

\begin{lemma}\label{reshetnyaktype}
Let $A\subset\subset B$, $A,B\in\mathcal{A}^R(\R^n)$. Given $u\in BV(A,\{\pm 1\})$ let $u_{A,\varphi}$ be defined as in (\ref{tracefunction}). Then there exists a sequence $A_{\e}\subset\subset A$ of sets of finite perimeter (not depending on $B$) such that $u_{\e}:=u_{A_{\e},\varphi}$ converges strictly to $u_{A,\varphi}$ on $B$.  
\end{lemma}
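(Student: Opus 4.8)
The plan is to take $A_\e$ among the superlevel sets of the distance to $\partial A$ and to extract a favourable sequence of levels by an averaging argument based on the coarea formula. Write $E:=\{u=1\}$, $E_\varphi:=\{u_\varphi=1\}$ and $V:=(E\triangle E_\varphi)\cap A$; since $u_\varphi$ is polyhedral, $E_\varphi$ is a polyhedral set and $\mathcal H^{n-1}(\partial^*E_\varphi\cap\partial A)=0$. Let $d(x):=\dist(x,\partial A)$ for $x\in\overline A$, so that $d$ is $1$-Lipschitz with $|\nabla d|=1$ a.e.\ in $A$, and for $t>0$ put $A_t:=\{x\in A:\,d(x)>t\}\subset\subset A$ and $u_t:=u_{A_t,\varphi}$, which is $\{\pm1\}$-valued and belongs to $BV_{\loc}(\rn)$. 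By the coarea formula $A_t$ is a set of finite perimeter for a.e.\ $t$, with $\partial^*A_t=\{d=t\}$ up to an $\mathcal H^{n-1}$-null set (the set where $d$ fails to be differentiable being Lebesgue negligible); discarding in addition the null set of levels for which $\{d=t\}$ charges $\partial^*E_\varphi$ or the reduced boundary of $V$, we keep a full-measure set of \emph{admissible} levels and will choose $t_k\downarrow0$ from it, setting $A_\e:=A_{t_k}$.

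Since $u_t$ and $u_{A,\varphi}$ differ only on $A\setminus A_t=\{0<d\le t\}$, we have $\|u_t-u_{A,\varphi}\|_{L^1(B)}\le2\,|\{0<d\le t\}|\to0$ along every sequence, so by lower semicontinuity of the total variation under $L^1$-convergence it suffices to produce $t_k\downarrow0$ with $\limsup_k|Du_{t_k}|(B)\le|Du_{A,\varphi}|(B)$. For an admissible $t$, decomposing $B$ into $A_t$, $\partial^*A_t$ and $B\setminus\overline{A_t}$ and performing the standard density computation for the reduced boundary of $\{u_t=1\}=(E\cap A_t)\cup(E_\varphi\cap(B\setminus A_t))$ on each piece gives
\begin{equation*}
\tfrac12|Du_t|(B)=P(E;A_t)+\mathcal H^{n-1}(D_t)+P(E_\varphi;B\setminus\overline{A_t}),
\end{equation*}
where $D_t$ is the set of points of $\{d=t\}$ at which the trace of $u$ from the side $A_t$ differs from $u_\varphi$; moreover $D_t=\{d=t\}\cap V^{(1)}$ up to an $\mathcal H^{n-1}$-null set, $V^{(1)}$ being the set of density-one points of $V$. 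The corresponding identity for the limit reads $\tfrac12|Du_{A,\varphi}|(B)=P(E;A)+\mathcal H^{n-1}(Z)+P(E_\varphi;B\setminus\overline A)$, where $Z\subset\partial A$ is the jump part of $u_{A,\varphi}$ on $\partial A$, and $\mathrm{tr}_A\mathbf1_V=\mathbf1_Z$ holds $\mathcal H^{n-1}$-a.e.\ on $\partial A$. As $t\downarrow0$ one has $P(E;A_t)\uparrow P(E;A)$ and, using $\mathcal H^{n-1}(\partial^*E_\varphi\cap\partial A)=0$, also $P(E_\varphi;B\setminus\overline{A_t})\downarrow P(E_\varphi;B\setminus\overline A)$; hence the statement reduces to finding $t_k\downarrow0$ with $\limsup_k\mathcal H^{n-1}(D_{t_k})\le\mathcal H^{n-1}(Z)$.

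This last point is the step I expect to be the main obstacle, and it is where the Lipschitz regularity of $\partial A$ is used. By the coarea formula applied to $d$,
\begin{equation*}
\frac1\delta\int_0^\delta\mathcal H^{n-1}(D_t)\,dt=\frac1\delta\,\bigl|V\cap\{0<d<\delta\}\bigr|,
\end{equation*}
and the right-hand side tends, as $\delta\to0^+$, to $\int_{\partial A}\mathrm{tr}_A\mathbf1_V\,d\mathcal H^{n-1}=\mathcal H^{n-1}(Z)$: this is a standard consequence of the $BV$ trace theory on Lipschitz domains — equivalently, of the $\mathcal H^{n-1}$-a.e.\ existence of the trace of $\mathbf1_V$ on $\partial A$ together with the equality of the inner Minkowski content of $\partial A$ with $\mathcal H^{n-1}(\partial A)$ (cf.\ Theorem~\ref{lipapprox}) — see e.g.\ \cite{AFP}. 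Since an average dominates the essential infimum, $\mathrm{ess}\,\liminf_{t\to0}\mathcal H^{n-1}(D_t)\le\mathcal H^{n-1}(Z)$, so admissible levels $t_k\downarrow0$ can be selected with $\mathcal H^{n-1}(D_{t_k})\le\mathcal H^{n-1}(Z)+1/k$. For such a sequence the first identity gives $\limsup_k|Du_{t_k}|(B)\le|Du_{A,\varphi}|(B)$, which together with the lower semicontinuity bound yields $|Du_{t_k}|(B)\to|Du_{A,\varphi}|(B)$; combined with the $L^1$-convergence, this is exactly the asserted strict convergence of $u_{t_k}$ to $u_{A,\varphi}$ on $B$. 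Finally $D_t\subset A$ is independent of $B$, so the levels $t_k$ — and with them the sets $A_\e$ — may be chosen without reference to $B$, as required.
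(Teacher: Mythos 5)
Your overall architecture (interior distance level sets $A_t=\{x\in A:\ \dist(x,\partial A)>t\}$, the decomposition $\tfrac12|Du_t|(B)=P(E;A_t)+\mathcal H^{n-1}(D_t)+P(E_\varphi;B\setminus\overline{A_t})$, and the selection of good levels by a coarea averaging argument) is sound, and the measure-theoretic reductions up to the last step are correct. The genuine gap is the boundary-layer identity
\begin{equation*}
\lim_{\delta\to0^+}\frac1\delta\bigl|V\cap\{0<d<\delta\}\bigr|=\int_{\partial A}\mathrm{tr}_A\mathds 1_V\,\mathrm{d}\mathcal H^{n-1},
\end{equation*}
which you assert to be a standard consequence of (i) the $\mathcal H^{n-1}$-a.e.\ existence of the trace and (ii) the inner Minkowski content identity for $\partial A$. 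These two facts alone do not imply it. One cannot test the measures $\mu_\delta=\delta^{-1}\mathcal L^n\res(A\setminus A_\delta)$ against the discontinuous function $\mathds 1_V$ by weak-$*$ convergence, and the natural quantitative attempt fails: at a trace point $x$ one only controls the solid average $\rho^{-n}\int_{B_\rho(x)\cap A}|\mathds 1_V-\mathrm{tr}_A\mathds 1_V(x)|\,\mathrm{d}y=o(1)$, and dividing the layer integral over $B_\rho(x)\cap\{d<\delta\}$ by $\delta\ll\rho$ produces $o(\rho^n)/\delta$, which is useless. The graph-local form of trace attainment (convergence in $L^1$ of the vertical translates $w(x',\gamma(x')+s)$) yields the analogous limit for \emph{vertical} layers, but without the area factor $\sqrt{1+|\nabla\gamma|^2}$; converting vertical layers into distance layers is exactly where the merely Lipschitz regularity of $\gamma$ bites, and this trace-weighted refinement of Theorem \ref{lipapprox} is not contained in \cite{AFP} nor quotable as stated. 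So the crux of the lemma has been relocated into an unproved assertion; the statement is plausibly true, but it needs a proof.

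For comparison, the paper does not construct $A_\e$ by hand: it applies the strict interior approximation theorem of Schmidt (Proposition 4.1 in \cite{tschmidt}) to $v=u_{A,\varphi}-u_\varphi$, which directly furnishes open sets $A_\e\subset\subset A$ of finite perimeter satisfying $|A\setminus A_\e|\le\e$, an exhaustion property, and the trace control $\int_{\partial^*A_\e}|v^-|\,\mathrm{d}\mathcal H^{n-1}\le\int_{\partial A}|v^-|\,\mathrm{d}\mathcal H^{n-1}+\e$, i.e.\ exactly the trace-controlled interior boundaries you are trying to manufacture; the remaining computation is then essentially the decomposition you perform. If you wish to keep the distance-level-set route, you must actually prove the displayed boundary-layer limit (for instance via $L^1$-attainment of the trace along the level sets $\{d=t\}$ for a Lipschitz domain), or else invoke \cite{tschmidt} as the paper does.
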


\begin{proof}
As a special case of Proposition 4.1 in \cite{tschmidt}, applied to the $BV$-function $v:=u_{A,\varphi}-u_{\varphi}$, for every $\e>0$ we find an open set $A_{\e}$ of finite perimeter such that $A_{\e}\subset\subset A$, $|A\backslash A_{\e}|\leq \e$ and

\begin{equation}\label{stricttrace}
\int_{\partial^*A_{\e}}|v^-|\,\mathrm{d}\mathcal{H}^{n-1}\leq \int_{\partial A}|v^-|\,\mathrm{d}\mathcal{H}^{n-1}+\e,
\end{equation}

where $v^-$ denotes the interior trace. By refining in a trivial way the argument in \cite{tschmidt} the sets $A_{\e}$ can be constructed in a way such that, for all $\delta>0$ there exists $\e_0>0$ such that for all $\e<\e_0$

\begin{equation}\label{fillset}
\{x\in A:\;\dist(x,\partial A)>\delta\}\subset A_{\e}.
\end{equation}

\hspace*{0,5cm}
We show that the sets $A_{\e}$ fulfill the required properties. It is easy to see that $u_{\e}$ converges to $u_{A,\varphi}$ in $L^1(B)$. By lower semicontinuity of the total variation it is enough to show that 

\begin{equation}\label{strictproperty}
\limsup_{\e\to 0}|Du_{\e}|(B)\leq |Du_{A,\varphi}|(B).
\end{equation}

By definition we have $|Du_{\e}|(B\backslash\overline{A})=|Du_{A,\varphi}|(B\backslash\overline{A})$ so that we can reduce the analysis to $\overline{A}$. Note that

\begin{equation*}
Du_{\e}=\mathds{1}_{A_{\e}^{(1)}}Du+\mathds{1}_{A_{\e}^{(0)}}Du_{\varphi}+(u_{\varphi}^+-u^-)\cdot\nu\,\mathcal{H}^{n-1}_{|\partial^*A_{\e}},
\end{equation*}

where $A_{\e}^{(0)},\,A_{\e}^{(1)}$ denote the points with density $0$ respectively $1$ with respect to $A_{\e}$. Since $A_{\e}\subset\subset A$ and $A_{\e}$ is open we infer
\begin{align*}
|Du_{\e}|(\overline{A})&\leq|Du|(A)+|Du_{\varphi}|(\overline{A}\backslash A_{\e})+\int_{\partial^*A_{\e}}|u_{\varphi}^+-u^-|\,\mathrm{d}\mathcal{H}^{n-1}
\\
&\leq |Du|(A)+|Du_{\varphi}|(\overline{A}\backslash A_{\e})+\int_{\partial^* A_{\e}}|u_{\varphi}^+-u_{\varphi}^-|\,\mathrm{d}\mathcal{H}^{n-1}+\int_{\partial^* A_{\e}}|u_{\varphi}^{-}-u^-|\,\mathrm{d}\mathcal{H}^{n-1}.
\end{align*}

By assumption on $u_{\varphi}$ we have $\mathcal{H}^{n-1}(Su_{\varphi}\cap\partial A)=0$, so that by (\ref{fillset}) the second and the third term vanish when $\e\to 0$. For the third one we use (\ref{stricttrace}) to infer

\begin{align*}
\limsup_{\e\to 0}|Du_{\e}|(\overline{A}) &\leq |Du|(A)+\int_{\partial A}|u^{-}-u_{\varphi}^-|\,\mathrm{d}\mathcal{H}^{n-1}
\\
&=|Du|(A)+\int_{\partial A}|u^{-}-u_{\varphi}^+|\,\mathrm{d}\mathcal{H}^{n-1}=|Du_{A,\varphi}|(\overline{A}),
\end{align*}

where we have used that on $\partial A$ the inner and outer traces of $u_{\varphi}$ agree.

\end{proof}

\begin{lemma}
For every $L\in\mathbb{N}, I\in\mathcal{I}$ and every rational direction $\nu\in S^{n-1}$ the function $\tilde{\mu}^L_{\nu}(I)$ defined in (\ref{process}) is $\mathcal{F}$-measurable.
\end{lemma}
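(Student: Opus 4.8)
The plan is to verify $\mathcal{F}$-measurability of $\tilde{\mu}^L_{\nu}(I)$ by reducing the infimum defining it to a countable infimum of measurable functions. Since $\tilde{\mu}^L_{\nu}(I)(\w)$ differs from $\mu^L_{\nu}(\w;I_n)$ only by the additive deterministic constant $K\,P(I,\R^{n-1})$, it suffices to show that $\w\mapsto\mu^L_{\nu}(\w;I_n)$ is $\mathcal{F}$-measurable, where $I_n=MA_{\nu}(\mathrm{int}\,I\times(-\tfrac{s_{\max}}{2},\tfrac{s_{\max}}{2}))$ is a fixed bounded open set. The key observation is that for a fixed realization $\w$, the admissible competitors $v\in C_1^{u_{0,\nu},L+r}(\w,I_n)$ are piecewise constant on the (finitely many, since $I_n$ is bounded and $\Lw$ is $r$-separated) Voronoi cells of $\Lw$ meeting $I_n$, with values prescribed near $\partial I_n$; hence there are only finitely many such competitors and the infimum is attained and equals $F^L_1(\w)(v^*,I_n)$ for some minimizer $v^*$. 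However, the combinatorics (how many points of $\e\Lw$ lie in $I_n$, which pairs are nearest neighbors, etc.) depends on $\w$, so one cannot write a single finite expression valid for all $\w$; the standard remedy is to partition $\Om$ into countably many measurable pieces on which the combinatorial structure is locally constant.

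First I would fix, for each finite configuration type, a measurable event. Concretely, cover $I_n$ by a countable family of dyadic cubes and, for each finite set of such cubes $\{R_1,\dots,R_p\}$ and each assignment of ``one point of $\Lw$ in $R_i$, no point in the complement within $I_n^{(L+r)}$'' (where $I_n^{(L+r)}$ is a fixed enlargement capturing all points that can interact with $I_n$), record the event $E$ that $\Lw$ has exactly this incidence pattern. Each such event is $\mathcal{F}$-measurable because the maps $\w\mapsto\#(\Lw\cap B)$ are measurable for Borel $B$ (this is part of what it means for $\mathcal{L}$ to be a random variable into $(\R^n)^\NN$), and the uniform separation/density bounds from Definition~\ref{defadmissible} guarantee that only finitely many cubes and finitely many points are ever involved, so the union of these events over all patterns is a countable measurable partition of a full-measure subset of $\Om$. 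On each piece $E$ one further localizes the positions of the finitely many relevant points into shrinking dyadic sub-cubes; refining the partition, one obtains a countable measurable partition $\{\Om_k\}$ of $\Om$ such that on each $\Om_k$ the number of relevant lattice points, their membership in $I_n$, and all the nearest-neighbor relations $(x_\a,x_{\a+\xi})\in\NNw$ are constant, while the point positions vary continuously.

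On a single piece $\Om_k$, the minimal energy $\mu^L_{\nu}(\w;I_n)$ is the minimum over the (fixed, finite) set of admissible sign patterns of a finite sum of terms of the form $c_{nn}(x_{\a+\xi}-x_\a)|u(x_\a)-u(x_{\a+\xi})|$ and $c_{lr}(x_{\a+\xi}-x_\a)|u(x_\a)-u(x_{\a+\xi})|$; since the $x_\a$ depend measurably (indeed continuously) on $\w\in\Om_k$ and $c_{nn},c_{lr}$ are fixed Borel functions, each such term is measurable in $\w$, hence so is the finite minimum. Therefore $\w\mapsto\mu^L_{\nu}(\w;I_n)$ is measurable on each $\Om_k$, hence on $\bigcup_k\Om_k$, which has full measure; since $\mathcal{F}$ is complete, $\tilde{\mu}^L_{\nu}(I)$ is $\mathcal{F}$-measurable. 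The main obstacle is the bookkeeping in the second step: one must set up the countable measurable partition carefully so that \emph{both} the discrete combinatorial data (point count, incidences, nearest-neighbor relations determined via the Voronoi/Delaunay condition \eqref{charann}) are locally constant \emph{and} the residual dependence on point positions enters only through continuous, hence measurable, functions; the measurability of the nearest-neighbor relation is the delicate point, but it follows because $(x_\a,x_{\a+\xi})\in\NNw$ is an open condition in the finitely many nearby points (a strict inequality of the form in \eqref{charann}), which is stable on small enough sub-cubes and thus constant on a sufficiently fine refinement.
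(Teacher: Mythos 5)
Your overall architecture (reduce to $\mu^L_\nu(\cdot;I_n)$, exploit that only finitely many lattice points and finitely many spin patterns matter, partition $\Om$ into countably many measurable pieces and take a finite minimum of measurable functions on each piece) is a legitimate alternative to the paper's argument, which instead proves measurability of the nearest-neighbour relation directly, replaces the discrete boundary constraint by a measurable penalty term, and writes $\tilde\mu^L_\nu(I)$ as a pointwise limit of minima over the spins attached to the first $M$ lattice points. But your treatment of what you yourself identify as the delicate point contains a genuine gap. You claim that $(x_\a,x_{\a+\xi})\in\NNw$ is ``stable on small enough sub-cubes and thus constant on a sufficiently fine refinement''. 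This is false: the nearest-neighbour condition is (at best) an \emph{open} condition in the relevant point positions, not a closed one, so it is not locally constant at degenerate Voronoi configurations. Concretely, in $n=2$ take four points at the corners of a unit square (plus far-away points to ensure admissibility): the two diagonal points are not nearest neighbours, since their cells meet only at the centre, yet moving one of them toward the other by an arbitrarily small amount creates a shared $1$-dimensional edge. Hence no refinement by localizing the points in small dyadic sub-cubes can make the nearest-neighbour pattern constant on the cell containing such a configuration, and the partition $\{\Om_k\}$ you describe cannot be produced by the procedure you propose. Relatedly, describing \eqref{charann} as ``a strict inequality'' glosses over its actual structure, an existential quantifier over $z\in\rn$ together with the equality $|x_\a-z|=|x_{\a+\xi}-z|$; this is precisely where work is needed, and it is why the paper converts the existential statement into countably many conditions using a countable basis of connected sets and the intermediate value theorem.

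The gap is repairable without changing your overall scheme: it suffices to know that each event $\{\w:\,(x_i,x_j)\in\NNw\}$ is $\mathcal F$-measurable, and then to partition according to the (finite) nearest-neighbour pattern of the finitely many relevant points rather than asking for local constancy under position refinement. One way is to prove carefully that the condition is open as a function of the finitely many points within distance $2R$ (using Lemma \ref{cellproperty}(i), a point $l$ with $|l-x_i|>2R$ cannot influence $C(x_i)$), by projecting the witness $z$ onto the perturbed bisector and using the strict margin over the finitely many competitors; openness plus measurability of $\w\mapsto\Lw_i$ then yields measurability of the event. The other way is the paper's countable-basis argument. Either ingredient must be supplied explicitly; as written, the step on which the whole lemma hinges is not justified. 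A minor additional point: the constraint ``$\dist(x,\partial I_n)\leq L+r$'' defining the admissible competitors is likewise not locally constant in the point positions, so on each $\Om_k$ you should either encode it through measurable indicator functions per spin pattern (declaring the energy $+\infty$ for inadmissible patterns) or, as the paper does, replace it by a penalty term that dominates the bound \eqref{minestimate}.
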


\begin{proof}
For $0<r<R$, we denote by $\Sigma_{r,R}$ the space of all admissible lattices with corresponding constants $r,R$ (in the sense of Definition \ref{defadmissible}). Since $\mathcal{F}$ is a complete $\sigma$-algebra, we can assume that $\Lw\in\Sigma_{r,R}$ for all $\w\in\Om$. Given $i,j\in\mathbb{N}$, we first prove that the set of all $x\in\Sigma_{r,R}$ such that $x_{i}$ and $x_{j}$ are nearest neighbours is measurable. Note that $x_{i}$ and $x_{j}$ are nearest neighbours if and only if

\begin{equation*}
\exists y\in\mathbb{R}^n:\quad |y-x_{i}|=|y-x_{j}|<|y-x_{k}|\quad\forall k\neq i,j.
\end{equation*}

Let us take a countable collection $\{B_n\}_n$ of connected sets that form a basis of the norm topology in $\mathbb{R}^n$. Using the fact that $x\in\Sigma_{r,R}$ on the one hand and the intermediate value theorem on the other hand one can check that the above characterization is equivalent to
\begin{align*}
x\in\bigcup_{n\in\mathbb{N}}&\Big(\{y\in\Sigma_{r,R}:\;\sup_{v\in B_n}|y_{i}-v|-|y_{j}-v|\geq 0,\,\inf_{v\in B_n}|y_{i}-v|-|y_{j}-v|\leq 0\}
\\
&\cap \bigcap_{k\in\mathbb{N}\backslash\{i,j\}}\{y\in\Sigma_{r,R}:\;\sup_{v\in B_n}|y_{i}-v|-|y_{k}-v|<0\}\Big)
\end{align*}
The last set is a countable union of measurable sets, hence measurable. It remains to show that the infimum in the definition can be taken over a countable set. The discrete constraints near the boundary can be replaced by a measurable penalty term of the form

\begin{equation*}
\sum_{i\geq 1}C\cdot (v(\Lw_{i})-u_{0,\nu}(\Lw_{i}))\cdot\mathds{1}_{\{y\in\rn:\,\dist(y,\partial I)\leq L+r\}}(\Lw_{i}),
\end{equation*}
where $C$ is large enough to dominate the right-hand side of (\ref{minestimate}). Finally we minimize over the first $M$ coordinates of vectors in $\{\pm 1\}^{\NN}$ (the others being constantly $1$) and then let $M\to+\infty$ to see that $\tilde{\mu}^L_{\nu}(I)$ can be written as the pointwise limit of measurable functions. Note that the limit exists since only finitely many points of the lattice are contained in $I$, so that the minimization process is finally constant.
\end{proof}

\end{document}